\newtheorem{theorem}{Theorem}[section]
\newtheorem{lemma}[theorem]{Lemma}
\newtheorem{definition}[theorem]{Definition}
\newtheorem{corollary}{Corollary}[theorem]
\theoremstyle{definition}
\numberwithin{equation}{section}
\newcommand{\R}{\mathbb{R}}
\newcommand{\C}{\mathbb{C}}
\newcommand{\Z}{\mathbb{Z}}
\newcommand{\N}{\mathbb{N}}
\newcommand{\eps}{\epsilon}
\newcommand{\la}{\lambda}
\newcommand{\Log}{\mathrm{Log}}
\newcommand{\norm}[1]{\| #1 \|}
\newcommand{\set}[1]{\left\{#1\right\}}
\newcommand{\E}[1]{\mathbb{E}\left[#1\right]}
\newcommand{\Pb}[1]{\mathbb{P}\left[#1\right]}
\newcommand{\Et}[1]{\mathbb{E}_\theta\left[#1\right]}
\newcommand{\Pt}[1]{\mathbb{P}_\theta\left[#1\right]}
\newcommand{\intpart}[1]{\ensuremath{\left[ #1 \right]}}
\newcommand{\fracpart}[1]{\ensuremath{\left\{ #1 \right\}}}
\renewcommand{\Re}{\mathrm{Re}}
\renewcommand{\Im}{\mathrm{Im}}
\newcommand{\scalar}[1]{\left< #1 \right>}
\newcommand{\scalarHS}[1]{\left< #1 \right>_{H.S.}}
\newcommand{\one}{\mathds{1}}
\newcommand{\ts}{\mathbf{t}}
\begin{document}

\title[Central limit theorem for multiplicative class functions]{Central limit theorem for multiplicative class functions on the symmetric group}

\author{Dirk Zeindler$^1$}
\thanks{$^1$support by the SNF}


\address{%
Department of Mathematics, University York, York, YO10 5DD, UK
}
\email{dz549@york.ac.uk}

\maketitle

\begin{abstract}
Hambly, Keevash, O'Connell and Stark have proven a central limit theorem
for the characteristic polynomial of a permutation matrix with respect to the uniform measure on the symmetric group.
We generalize this result in several ways.
We prove here a central limit theorem for multiplicative class functions on symmetric group with respect to the Ewens measure
and compute the covariance of the real and the imaginary part in the limit.
We also estimate the rate of convergence with the Wasserstein distance.
\end{abstract}

\tableofcontents
%
\newpage
\section{Introduction}
The study of random matrices has gained importance in many areas of mathematics and physics, for example in
nuclear physics, infinite dimensional integrable systems and large$-n$ representation theory.
Random matrix theory (RMT) was in the recent years also of big interest in number theory since the study of the spectrum of
the characteristic polynomial of a random matrix in a compact Lie group was central in obtaining conjectures.
%
A good example to illustrate this is the paper of Keating and Snaith \cite{snaith}.
They conjectured that the Riemann zeta function on the critical line could be
modeled by the characteristic polynomial of a random unitary matrix considered on the unit circle.
One of the results in \cite{snaith} is
\begin{theorem}
 Let $x$ be a fixed complex number with $|x|=1$ and $g_n$
be a unitary matrix chosen at random with respect to the Haar measure. Then
\begin{align}
 \frac{\Log\Bigl(\det(I_n-xg_n)\Bigr)}{\sqrt{\frac{1}{2} \log(n)}}
\xrightarrow{d} \mathcal{N}_1 + i\mathcal{N}_2 \text{ for }n\to\infty
\end{align}
and $\mathcal{N}_1,\mathcal{N}_2$ independent, normal distributed random variables.
\end{theorem}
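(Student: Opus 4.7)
The plan is to reduce the statement to the convergence of a characteristic function and to compute that characteristic function explicitly via the Weyl integration formula on $U(n)$. First, by the invariance of Haar measure under $g_n \mapsto e^{i\varphi}g_n$, one may assume without loss of generality that $x=1$, so the task becomes establishing
\begin{align*}
\frac{\Log\det(I_n-g_n)}{\sqrt{\tfrac{1}{2}\log n}} \xrightarrow{d} \mathcal{N}_1+i\mathcal{N}_2.
\end{align*}
The logarithm is defined as $\Log\det(I_n-g_n)=\sum_{j=1}^n \Log(1-e^{i\theta_j})$ with the principal branch, which is finite almost surely because $1$ is a.s.\ not an eigenvalue of $g_n$.

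Second, I would compute the joint moment generating function of the real part $X_n:=\Re\Log\det(I_n-g_n)=\log|\det(I_n-g_n)|$ and imaginary part $Y_n:=\Im\Log\det(I_n-g_n)$. Writing
\begin{align*}
\det(I_n-g_n)^{s}\,\overline{\det(I_n-g_n)}^{\,t} = e^{(s+t)X_n}\,e^{i(s-t)Y_n},
\end{align*}
the Weyl integration formula reduces the expectation to an integral of a Vandermonde-squared density over the torus. Evaluating this via the Cauchy/Selberg identity (equivalently, as a Toeplitz determinant with Barnes $G$-function symbol) yields the Keating--Snaith identity
\begin{align*}
\E{\det(I_n-g_n)^s\,\overline{\det(I_n-g_n)}^{\,t}}
= \prod_{j=1}^{n}\frac{\Gamma(j)\,\Gamma(j+s+t)}{\Gamma(j+s)\,\Gamma(j+t)}.
\end{align*}

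Third, I would substitute $s=\tfrac{iu+v}{2\sqrt{\tfrac12\log n}}$, $t=\tfrac{iu-v}{2\sqrt{\tfrac12\log n}}$, producing the joint characteristic function $\E{\exp(iu\,\widetilde X_n+iv\,\widetilde Y_n)}$ of the rescaled pair $(\widetilde X_n,\widetilde Y_n):=(X_n,Y_n)/\sqrt{\tfrac12\log n}$. Taking logarithms of the Gamma product and expanding via Stirling's formula $\log\Gamma(j+z)=\log\Gamma(j)+z\,\psi(j)+\tfrac{z^2}{2}\psi'(j)+O(z^3/j^2)$, the linear terms in $s,t$ cancel, the cubic and higher terms vanish in the limit because of the $(\log n)^{-3/2}$ prefactor, and the quadratic contribution telescopes against $\sum_{j=1}^n \psi'(j)\sim \log n$, leaving
\begin{align*}
\E{e^{iu\widetilde X_n+iv\widetilde Y_n}} \longrightarrow e^{-(u^2+v^2)/2}.
\end{align*}
L\'evy's continuity theorem then gives the joint convergence to an independent pair of standard real Gaussians.

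The main obstacle is the analytic step: one must justify that the identity, originally valid for $\Re s,\Re t>-\tfrac12$ say, can be analytically continued to the purely imaginary values needed for the characteristic function, and that the branch of $z\mapsto z^s$ used on the left-hand side matches the principal branch defining $\Log\det(I_n-g_n)$. This requires a uniform integrability / tail bound argument controlling the event that some eigenvalue is very close to $1$, so that the complex powers stay integrable along the contour of deformation. The Stirling expansion itself is routine once this is in place.
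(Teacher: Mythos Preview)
The paper does not prove this theorem at all: it is quoted in the Introduction as a result of Keating and Snaith \cite{snaith}, serving only as motivation for the paper's own work on \emph{permutation} matrices. There is therefore no ``paper's proof'' to compare against.

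Your sketch is essentially the original Keating--Snaith argument: rotate away $x$, evaluate the mixed moments $\E{\det(I_n-g_n)^s\overline{\det(I_n-g_n)}^{\,t}}$ via the Selberg/Morris integral (or equivalently the Andr\'eief--Heine identity for Toeplitz determinants), then expand the resulting Gamma product with Stirling and apply L\'evy's continuity theorem. This is the correct route, and you have also correctly flagged the genuine subtlety, namely passing from real $s,t$ to the complex values required for the characteristic function and matching the branch of the logarithm. In the literature this is handled either by showing the moment formula extends analytically in a half-plane containing the needed values together with a uniform bound, or by working instead with real moments of $|{\det}|$ and the distribution of the imaginary part separately. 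Your plan is sound as a proof of the cited theorem; it simply has no counterpart in the present paper.
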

Constin and Lebowitz have proven 10 years earlier in \cite{PhysRevLett.75.69} a weaker version of this theorem.
They showed that

\begin{align}
  \Im\left(   \frac{\Log\Bigl(\det(I_n-xg_n)\Bigr)}{\sqrt{\frac{1}{2} \log(n)}}   \right) \stackrel{d}{\to} \mathcal{N}_1.
\end{align}

They conjectured that the same is true for the real part and that the imaginary part and the real part are independent in the limit,
but haven't been able to prove this.\\


The situation for the characteristic polynomial of permutation matrices is similar.\\
A permutation matrix is a unitary matrix of the form $(\delta_{i,\sigma(j)})_{1\leq i,j\leq n}$ with $\sigma\in S_n$
and $S_n$ the symmetric group. It is easy to see that the permutation matrices form a group isomorphic to $S_n$.
We call for simplicity both groups $S_n$ and use this identification without mentioning it explicitly.
%
%
%
The characteristic polynomial $Z_n(x)$ of a permutation matrix is defined as
\begin{align}
\label{eq_def_Z_n_simple}
Z_n(x)=Z_n(x)(\sigma):=\det(I-x \sigma) \text{ with } x\in\C, \sigma\in S_n.
\end{align}
Hambly, Keevash, O'Connell and Stark have proven in \cite{HKOS}
\begin{theorem}[B.M.Hambly, P.Keevash, N.O'Connell and D.Stark]
\label{thm_HKOS}
Let $g\in S_n$ be chosen uniformly at random and $x$ be a fixed complex number with $|x| = 1$, not a root of unity and of finite type (see Definition~\ref{def_finite_type}).
Then
\begin{align}
\Re\left(\frac{\Log\bigl(Z_n(x)\bigr)}{\sqrt{\frac{\pi^2}{12}\log(n)}} \right) \xrightarrow{d} \mathcal{N}_a,
\qquad
\Im\left(\frac{\Log\bigl(Z_n(x)\bigr)}{\sqrt{\frac{\pi^2}{12}\log(n)}} \right) \xrightarrow{d} \mathcal{N}_b
\end{align}
with $\mathcal{N}_a,\mathcal{N}_b$ standard normal distributed random variables.
\end{theorem}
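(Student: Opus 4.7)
My starting point is the cycle decomposition: if $\sigma\in S_n$ has cycle type $\lambda=(1^{c_1}2^{c_2}\cdots n^{c_n})$, then the permutation matrix has characteristic polynomial
\begin{align*}
Z_n(x) = \prod_{k=1}^{n}(1-x^k)^{c_k},
\qquad
\Log Z_n(x) = \sum_{k=1}^{n} c_k\,\Log(1-x^k),
\end{align*}
so studying $\Log Z_n(x)$ reduces to a linear functional of the cycle-count vector $(c_1,\dots,c_n)$. The plan is to exploit the classical fact that under the uniform measure these counts are asymptotically independent Poissons with $c_k\sim\mathrm{Poisson}(1/k)$, feed this into a Lindeberg–Feller central limit theorem, and use a Diophantine/equidistribution argument (this is where the finite type assumption on $x$ enters) to compute the limiting variance.

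The first step is a Poisson coupling. I would introduce independent $Y_k\sim\mathrm{Poisson}(1/k)$ via the Feller coupling of Arratia–Barbour–Tavar\'e, so that the cycle counts $(c_1,\dots,c_b)$ on $S_n$ can be realised jointly with $(Y_1,\dots,Y_b)$ in such a way that they agree with high probability whenever $b=b(n)\to\infty$ slowly. Truncating the sum at some $b\ll n$, I replace
\begin{align*}
\sum_{k=1}^{b} c_k\,\Log(1-x^k)\quad\text{by}\quad \sum_{k=1}^{b} Y_k\,\Log(1-x^k),
\end{align*}
and I need to bound the tail $\sum_{k>b} c_k\,\Log(1-x^k)$. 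This is precisely where the finite type hypothesis on $x$ is essential: it guarantees a lower bound $|1-x^k|\gg k^{-\gamma}$, so $|\Log(1-x^k)|\ll\log k$, and the tail contribution is $o(\sqrt{\log n})$ in probability.

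The second step is a CLT for the independent-Poisson sum. Writing $W_n=\sum_{k=1}^{b}Y_k\Log(1-x^k)$, its real and imaginary parts have variances
\begin{align*}
\sum_{k=1}^{b}\frac{(\log|1-x^k|)^2}{k}\quad\text{and}\quad\sum_{k=1}^{b}\frac{(\arg(1-x^k))^2}{k},
\end{align*}
and the real-imaginary covariance is $\sum_{k=1}^{b}\frac{\log|1-x^k|\,\arg(1-x^k)}{k}$. Since $\{k\alpha\}$ with $x=e^{2\pi i\alpha}$ is equidistributed when $\alpha$ is irrational, Weyl's theorem (quantitatively, via Koksma–Erd\H os–Tur\'an, using finite type to control the discrepancy against the mild singularity of $\log|1-e^{2\pi it}|$) gives
\begin{align*}
\frac{1}{\log n}\sum_{k=1}^{b}\frac{f(x^k)}{k}\longrightarrow \int_{0}^{1}f(e^{2\pi it})\,dt
\end{align*}
for $f=(\log|1-\cdot|)^2,\ (\arg(1-\cdot))^2$ and the cross term. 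The Fourier expansions $\log|1-e^{i\theta}|=-\sum_{m\ge1}\cos(m\theta)/m$ and $\arg(1-e^{i\theta})=-\sum_{m\ge1}\sin(m\theta)/m$ give by Parseval that both $f$-integrals equal $\pi^2/12$, while orthogonality of sine and cosine forces the cross integral to vanish. Then Lindeberg's condition is immediate because $|\Log(1-x^k)|$ has only logarithmic growth in $k$ while the variance grows like $\log n$.

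The main obstacle I expect is the interaction between the truncation $k\le b$ and the Diophantine control of $|1-x^k|$: for $k$ close to $n$ there are typically very few but potentially spectacularly large terms $\Log(1-x^k)$, and one must rule out that a single long cycle whose length happens to be a strong denominator of $\alpha$ ruins the normal behaviour. The finite type hypothesis is tuned precisely so that these atypical $k$'s are sparse enough for a Borel–Cantelli / variance-bound argument to push their contribution below the $\sqrt{\log n}$ scale; making this quantitative (with the right choice of $b(n)$) is the delicate part of the proof. Once that is done, joint normality of real and imaginary parts follows by the Cram\'er–Wold device, and the asymptotic independence stated in the theorem follows from the vanishing of the limiting covariance.
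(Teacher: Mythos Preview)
Your overall strategy---Feller coupling to replace cycle counts by independent Poissons, then a CLT for the Poisson sum, with equidistribution (Koksma/Erd\H{o}s--Tur\'an under the finite-type hypothesis) to compute the limiting variance---is exactly the route the paper takes in proving its generalisation (Theorem~\ref{thm_main_result}, of which the HKOS statement is the special case $\theta=1$, $f(x)=1-x$). Your Parseval computation of $\pi^2/12$ and of the vanishing cross-integral is correct and slightly more explicit than the paper's argument, which obtains the zero covariance by observing that $\int_{-1/2}^{1/2}(-\pi s)\log|1-e^{2\pi i s}|\,ds=0$ since the integrand is odd.

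The one substantive difference is your truncation at $b\ll n$. You invoke the ``agree with high probability'' form of the Feller coupling, which forces $b$ to grow slowly, and this saddles you with the tail $\sum_{k>b}c_k\Log(1-x^k)$; you correctly identify this as the delicate point but do not resolve it, and the crude bound $|\Log(1-x^k)|\ll\log k$ is not sharp enough to make the tail $o(\sqrt{\log n})$ unless $\log b/\log n\to 1$. The paper sidesteps this entirely by using the \emph{quantitative} Feller bound $\mathbb{E}\bigl|C_m^{(n)}-Y_m\bigr|\le 2/(n+1)$, valid uniformly for all $m\le n$ when $\theta\ge 1$. Since equidistribution gives $\sum_{m\le n}|\Log(1-x^m)|=O(n)$, one replaces $C_m^{(n)}$ by $Y_m$ for \emph{every} $m=1,\dots,n$ simultaneously with total $L^1$ error $O(1)$ (Lemma~\ref{lem_help_asyp_behavior}). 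No truncation, no tail. After that, the CLT for $\sum_{m\le n}Y_m\Log(1-x^m)$ goes through with no obstacle; the paper does it by a direct characteristic-function computation rather than Lindeberg--Feller, but that is a cosmetic difference.
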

As for unitary matrices, it is natural to ask if $\mathcal{N}_a$ and $\mathcal{N}_b$ are independent.
This question is not considered in \cite{HKOS} and was the main motivation for this paper.
We will see in Corollary~\ref{cor_extension_HKOS} that $\mathcal{N}_a$ and $\mathcal{N}_b$ are indeed independent.
But we can show here much more. The main result of this paper is Theorem~\ref{thm_main_result}, which is an extension of Theorem~\ref{thm_HKOS}
with three important differences.
These differences are
\begin{itemize}
 \item We compute the covariance of the real and the imaginary part in the limit.
 \item We endow $S_n$ with the Ewens measure (see Definition~\ref{def_ewens_measure}), which is a generalization of the uniform measure.
 \item We consider more general class functions on $S_n$, the so called multiplicative class functions $W^n(f)$ (see Definition~\ref{def_Wn(f)}).
\end{itemize}

We have introduced multiplicative class functions $W^n(f)$ in \cite{associated_class} as generalization of $Z_n(x)$
and studied there the asymptotic behavior of their moments with respect to the uniform measure on $S_n$.\\
The main idea there is to write down the generating function with a combinatorial argument
and to use function theory to extract the asymptotic behavior.

The structure of this paper is as follows:
we introduce in Section~\ref{sec_def_and_limit_thm} multiplicative class functions $W^n(f)$ and state the main theorem of this paper.
We do in Section~\ref{sec_preparations} some preparations and state in Section~\ref{sec_weak_limit_thm} an auxiliary central limit theorem.
We then prove in Section~\ref{sec_poof_of_central_limit_for_Wn} the main Theorem~\ref{thm_main_result}.
In Section~\ref{sec_wasserstein} we then estimate the convergence rate of the probability measures
in the main theorem with the Wasserstein distance.

\section{Definition and main theorem}
\label{sec_def_and_limit_thm}

We introduce in Section~\ref{sec_Sn_introd} the Ewens measure and some well know functions on $S_n$.
In Section~\ref{sec_def_of_Wn}, we give an alternative expression for $Z_n(x)$
and use this expression to introduce the multiplicative class functions $W^n(f)(x)$.
We then state in Section~\ref{sec_main_thm} the main theorem of this paper.

\subsection{The symmetric group $S_n$}
\label{sec_Sn_introd}

All functions on $S_n$  in this paper are invariant under conjugation ($u(hgh^{-1}) =u(g)$), i.e. they are class functions. It is therefore natural to take a look at the conjugation classes of $S_n$. These can be parameterize with partitions.
\begin{definition}
 \label{def_part}
A partition $\la$ is a sequence of nonnegative integers $\la_1 \ge \la_2 \ge \cdots$.
The size of the partition is $|\lambda|:= \sum_m \lambda_m$ and the length of $\ell(\la)$ is the largest $\ell$ such that $\la_\ell \neq 0$. We call $\la$ a partition of $n$ if $|\la| = n$, and denote this by $\la\vdash n$.
\end{definition}
Let $\sigma\in S_n$ be arbitrary
and write $\sigma = \sigma_1 \sigma_2\cdots \sigma_\ell\in S_n$ with $\sigma_i$ disjoint cycles of length $\la_i$.
Since disjoint cycles commute, we can assume that $\la_1 \geq \la_2 \geq \cdots \geq \la_\ell$.
We call the partition $\la = ( \la_1,\la_2,\dots,\la_\ell) $ the \emph{cycle type} of $\sigma$.
It remains to show that two elements of $S_n$ are conjugated if and only if they have the same cycle type. Since this is well known, we omit the proof here and refer to \cite[chapter 39]{bump}.\\
We introduce as next the cycles counts of a given length.
\begin{definition}
\label{def_cm_cycle}
Let $\sigma\in S_n$ be given with cycle-type $\la=(\la_1,\cdots,\la_\ell)$. We define
\begin{align}
C_m:=C_m^{(n)}:=C_m^{(n)}(\sigma):=\# \set{i : 1\leq i\leq l  \text{ and } \la_i = m}.
\end{align}
\end{definition}

The functions $C_m^{(n)}(\sigma)$ depends only on the cycle-type of $\sigma$ and are thus class functions on $S_n$.
It is clear that the cycle type $\la$ of $\sigma\in S_n$ is uniquely determined by the values of $C_1^{(n)} ,\cdots,C_n^{(n)}$.
We therefore can work with the functions $C_m^{(n)}$ or with the cycle type $\la$.
We prefer here to use $C_m^{(n)}$.

The most natural measure on $S_n$ is the uniform measure (i.e $\Pb{A}=\frac{|A|}{n!}$),
%
but there exists of course other important measures on $S_n$ than the uniform measure. One of these measures is the
\emph{Ewens measure}, appearing in population genetics (see \cite{MR0325177}). The Ewens measure is a generalization of the uniform measure and has an additional weight depending on the total number of cycles.
\begin{definition}
  Let $\theta>0$. We then set for $\sigma\in S_n$ with cycle type $\la$
  \begin{align}
    \Pt{\sigma}:=  \frac{\theta^{\ell(\la)} }{\theta(\theta+1)\cdots (n+\theta-1)}.
  \end{align}
  The measure $\Pt{.}$ is called the Ewens measure with parameter $\theta$.
\end{definition}
The uniform measure is the special case $\theta =1$.

Many results about the Ewens measure can be found in the book \cite[chapter 4]{barbour}, for instance
\begin{lemma}
\label{def_ewens_measure}
Let $\theta>0$ be given.
The distribution of $(C_1,C_2,\cdots,C_n)$ with respect to
the \emph{Ewens measure} on $S_n$ is given by
\begin{align}
\label{eq_distribution_ewens}
\Pt{(C_1=c_1,\cdots,C_n=c_n)}
=
\frac{1}{\binom{\theta+n-1}{n}}
\prod_{m=1}^n \frac{1}{c_m!} \left( \frac{\theta}{m}\right)^{c_m} \one \left(n=\sum_{m=1}^n m c_m\right)
\end{align}
and the expectation of $C_{m}^{(n)}$ is
\begin{align}
 \E{C_{m}^{(n)}} = \frac{\theta}{m}  \frac{\binom{\theta+n-m-1}{n-m}}{ \binom{\theta+n-1}{n}} \one(m \leq n).
\label{eq_expec_Cm}
\end{align}
\end{lemma}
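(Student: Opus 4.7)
The plan is to derive both formulas as direct consequences of the Ewens weight together with the classical count of permutations with a given cycle-type.

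For the joint distribution, I would start from the well-known identity that the number of $\sigma\in S_n$ whose cycle counts are $(c_1,\dots,c_n)$ equals $n!/\prod_{m} m^{c_m} c_m!$: one obtains every such $\sigma$ by filling the $n$ labels into a skeleton of $c_m$ cycles of length $m$ for each $m$, and correcting for the cyclic rotation within each cycle (factor $m^{c_m}$) and for reordering cycles of equal length (factor $c_m!$). Multiplying by $\Pt{\sigma}$ and using $\ell(\la) = \sum_m c_m$ to split $\theta^{\ell(\la)}$ as $\prod_m \theta^{c_m}$, one arrives at
\begin{align}
\Pt{(C_1=c_1,\dots,C_n=c_n)} = \frac{n!}{\theta(\theta+1)\cdots(\theta+n-1)} \prod_{m=1}^n \frac{1}{c_m!}\Big(\frac{\theta}{m}\Big)^{c_m},
\end{align}
on the event $\sum_m m c_m = n$. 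Rewriting the prefactor as $1/\binom{\theta+n-1}{n}$ via $\theta(\theta+1)\cdots(\theta+n-1) = n!\binom{\theta+n-1}{n}$ then gives the claimed formula~(\ref{eq_distribution_ewens}).

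For the expectation I would substitute the explicit joint law into the definition of $\Et{C_m^{(n)}}$ and apply the shift $c_m\mapsto c_m-1$: the identity $c_m/c_m! = 1/(c_m-1)!$ (for $c_m\geq 1$) transforms
\begin{align}
\sum_{c:\,\sum_k k c_k = n} c_m \prod_k \frac{1}{c_k!}\Big(\frac{\theta}{k}\Big)^{c_k}
= \frac{\theta}{m}\sum_{c':\,\sum_k k c'_k = n-m} \prod_k \frac{1}{c'_k!}\Big(\frac{\theta}{k}\Big)^{c'_k}.
\end{align}
The remaining sum is precisely the normalising sum coming from the Ewens measure on $S_{n-m}$, hence equals $\binom{\theta+n-m-1}{n-m}$. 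Dividing by $\binom{\theta+n-1}{n}$ produces (\ref{eq_expec_Cm}), with the indicator $\one(m\leq n)$ accounting for the fact that the shift is only legal when $n-m\geq 0$.

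There is essentially no genuine obstacle. The whole lemma is a direct manipulation of the Ewens weight once the cycle-type counting formula is in hand, which is classical. The expectation then reduces, by the standard size-biasing trick, to the total-mass identity for the Ewens measure on a smaller symmetric group.
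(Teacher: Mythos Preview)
Your argument is correct and standard. Note, however, that the paper does not actually prove this lemma: it simply states it as one of the ``many results about the Ewens measure'' to be found in \cite[chapter~4]{barbour}. So there is nothing to compare against beyond the reference.

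Your derivation is exactly the classical one: Cauchy's formula $n!/\prod_m m^{c_m}c_m!$ for the number of permutations of given cycle type, multiplied by the Ewens weight $\theta^{\sum c_m}/(\theta)_n$, gives \eqref{eq_distribution_ewens} after recognizing the rising factorial as $n!\binom{\theta+n-1}{n}$. The expectation then follows by the size-biasing shift $c_m\mapsto c_m-1$, which reduces the sum to the normalising constant of the Ewens sampling formula on $S_{n-m}$. One small point worth making explicit: the fact that the inner sum equals $\binom{\theta+n-m-1}{n-m}$ is precisely the statement that the probabilities in \eqref{eq_distribution_ewens} sum to $1$ on $S_{n-m}$, which you are implicitly invoking; this is equivalent to the exponential formula identity $\sum_{\sum k c_k = N}\prod_k (\theta/k)^{c_k}/c_k! = (\theta)_N/N!$, and is itself the content of the normalisation of $\mathbb{P}_\theta$.
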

%

\subsection{Definition of $W^n(f)$}
\label{sec_def_of_Wn}

We now come multiplicative class functions.
We first give a explicit expression for $Z_n(x)$ using the cycle counts $C_m^{(n)}$, and then
use this expression to introduce multiplicative class functions.

Let $\sigma\in S_n$ be given, then
\begin{align}
Z_n(x)
 &=
 \prod_{m=1}^n (1-x^m)^{C_m^{(n)}}.
 \label{eq_lem_Z_n_with_cycle}
\end{align}
The proof of this equation is straightforward.
One first has to take a look a the case when $\sigma$ is a cycle and then take a look at the general case.
More details can be found in \cite{EJP2010-34}.
\begin{definition}
\label{def_Wn(f)}
Let $x\in\C$ be complex number and $f:S^1 \to \C$ be a real analytic function with $S^1:= \set{z\in\C: |z| =1}$. We then define the multiplicative
class function associated to the function $f$ as
\begin{align}
 W^n(f) = W^n(f)(x)(\sigma):= \prod_{m=1}^{n} f(x^{m})^{C_m^{(n)}}.
 \label{eq_def_W_n_with_part}
\end{align}
For brevity, we simply call this a multiplicative class function.
We also set
\begin{align}
w^n(f)(x) = \log\left(W^n(f)(x)\right):= \sum_{m=1}^n C_m^{(n)} \log\bigl(f(x^m)\bigr)
\end{align}
with $\log$ the principal branch of logarithm and $\log(-y):= \log(y)+i\pi$ for $y\in\R_{>0}$ and $\log(0)=\infty$.
\end{definition}
It is clear that $W^n(f)$ and $w_n(f)$ are class functions on $S_n$ and that the characteristic polynomial is the special case $f(x)=1-x$.

Multiplicative class functions $W^n(f)$ have been introduced in \cite{associated_class} as generalization of the characteristic polynomial $Z_n(x)$. The motivation was that one can compute the asymptotic behaviour of the moments of $Z_n(x)$ and of $W^n(f)$ using the same method with only minor changes.
We will see in Section~\ref{sec_poof_of_central_limit_for_Wn} that this is here also the case.

In contrast, the extension to the Ewens measure is much more laborious. We will see in the proof of Lemma~\ref{lem_help_asyp_behavior} that the case $0<\theta<1$ causes much more work than the uniform measure.

\subsection{The main theorem}
\label{sec_main_thm}

We state in this section the main result of this paper. For this we need some small preparations.
\begin{definition}
\label{def_mahler_measure}
Let $f:S^1 \to \C$ be a real analytic function, $x\in S^1$ be arbitrary but fixed.
If $x$ is a root of unity of order $p$, i.e. $x^p=1$ and $p$ minimal, we set
\begin{align}
  m(f)(x)
  =
  \frac{1}{p}\sum_{m=1}^p \log\bigl(f(x^m)\bigr).
  \label{eq_def_mahler_measure_1}
\end{align}
If $x$ is not a root of unity, we set
\begin{align}
m(f)(x)
=
\int_{0}^1 \log\bigl(f(e^{2 \pi i s})\bigr) \ ds.
\label{eq_def_mahler_measure_2}
\end{align}
If the sum in \eqref{eq_def_mahler_measure_1} and the integral in \eqref{eq_def_mahler_measure_2} respectively does not exists, we set $m(f)(x):=\infty$.
\end{definition}

The integral in \eqref{eq_def_mahler_measure_2} exists for each real analytic $f\neq 0$ since the zeros of $f$ are isolated and
\begin{align}
   \log\bigl(f(e^{2 \pi i s})\bigr) \sim K \log(s-s_0) \ \text{ for } \ s\to s_0
\end{align}
for $s_0$ a zero of $f$. Thus $m(f)(x)= \infty$ if and only if $f\equiv 0$, or $x$ is a root of unity of order $p$ and $f(x^m)=0$ for some $1\leq m \leq p$.

One could rewrite the integral in \eqref{eq_def_mahler_measure_2} as
$\int_{S^1} \log\bigl(f(\varphi)\bigr) \ d\varphi$ with $d\varphi$ the uniform measure on $S^1$.
We have not used this because this can be easily confused  with the complex integral
$\frac{1}{2\pi i }\int_{S^1} \log\bigl(f(z)\bigr) \ dz = \int_{0}^1 \log\bigl(f(e^{2 \pi i s})\bigr)  e^{2 \pi i s}\ ds$.

We now come to the main theorem. We distinguish the cases $x$ a root of unit and $x$ not a root of unity.
For $x$ not a root of unity we assume, as in Theorem~\ref{thm_HKOS}, that $x$ of finite type (see Definition~\ref{def_finite_type}).
This condition is essential in our proof. We postpone the definition of finite type to the end of Section~\ref{sec_uniform_dist_seq} since we can illustrate there why we need this assumption.
%
\begin{theorem}
\label{thm_main_result}
Let $f:S^1\to \C$ be real analytic, $x\in S^1$ and $S_n$ be endowed with Ewens measure.
\begin{itemize}
\item
If $x$ is not a root of unity and of finite type, and all zeros of $f$ are roots of the unity, then
\begin{align}
       	 \frac{w^n(f)(x)}{\sqrt{\log(n)}}-\theta\sqrt{\log(n)} m(f)(x)  \xrightarrow{d} \mathcal{N}
         \label{eq_thm_weak_not_root_of_unity}
    \end{align}
with $\mathcal{N}$ a complex normal distributed random variable.
The covariance of the real and the imaginary part of $\mathcal{N}$ is given by
      \begin{align}
         \frac{\theta}{2}
         \Im\left( \int_{0}^1 \log^2\bigl(f(e^{2 \pi i s})\bigr) \ ds \right).
	 \label{eq_independence_not_root_of_unity}
       \end{align}
The real and the imaginary part of $\mathcal{N}$ are independent if and only if the covariance is equal to $0$.\\
\item
If $x$ is a root of unity of order $p$ and $f(x^m) \neq 0$ for all $1 \leq m \leq p$ then
     \begin{align}
        \frac{w^n(f)(x)}{\sqrt{\log(n)}}-\theta\sqrt{\log(n)} m(f)   \xrightarrow{d} \mathcal{N}
        \label{eq_thm_weak_root_of_unity}
      \end{align}
with $\mathcal{N}$ a complex normal distributed random variable.
The covariance of the real and the imaginary part of $\mathcal{N}$ is given by
	\begin{align}
          \frac{\theta}{2}
	  \Im\left(\frac{1}{p} \sum_{m=1}^{p} \log^2\bigl(f(x^m)\bigr) \right).
	  \label{eq_independence_root_of_unity}
    \end{align}
The real and the imaginary part of $\mathcal{N}$ are independent if and only if the covariance is equal to $0$.
\end{itemize}
\end{theorem}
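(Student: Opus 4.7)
The plan is to reduce the statement to the auxiliary central limit theorem announced in Section~\ref{sec_weak_limit_thm}, which treats linear combinations of the cycle counts $C_m^{(n)}$ under the Ewens measure. The standard Poisson approximation based on the Ewens Sampling Formula (Lemma~\ref{def_ewens_measure}) replaces $(C_m^{(n)})_{1\le m\le n}$, to leading order, by independent Poisson variables $Y_m \sim \mathrm{Poisson}(\theta/m)$. Writing $a_m := \log f(x^m)$ so that $w^n(f)(x) = \sum_{m=1}^n C_m^{(n)} a_m$, the auxiliary CLT, combined with a Lyapunov-type condition and the negligibility of the tail cycles, should yield
\begin{align*}
\frac{w^n(f)(x) - \sum_{m=1}^n \frac{\theta}{m} a_m}{\sqrt{\log n}} \xrightarrow{d} \mathcal{N},
\end{align*}
and pin down the bivariate Gaussian law of $\mathcal{N}$ through the limits of $\frac{1}{\log n}\sum_{m\le n}\frac{\theta}{m}(\Re a_m)^2$, the analogous sum for $(\Im a_m)^2$, and the cross sum $\sum_{m\le n}\frac{\theta}{m}\Re a_m \, \Im a_m$.

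To extract the deterministic centering and the covariance when $x$ is not a root of unity, I would invoke Weyl equidistribution of $(x^m)_{m\ge 1}$ on $S^1$ combined with partial summation. For continuous $g$, Weyl gives $\frac{1}{n}\sum_{m\le n} g(x^m)\to \int_0^1 g(e^{2\pi i s})\,ds$; Koksma's inequality, together with $x$ being of finite type, upgrades this to
\begin{align*}
\sum_{m=1}^n \frac{g(x^m)}{m} = \log(n)\int_0^1 g(e^{2\pi is})\,ds + o(\sqrt{\log n})
\end{align*}
for the four functions $g = \log f,\ (\Re \log f)^2,\ (\Im \log f)^2$ and $\Re\log f \cdot \Im\log f$. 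Applied to $g=\log f$ this identifies the centering $\theta\log(n)\,m(f)(x)$; applied to the three quadratic choices it produces the limiting variances and the covariance $\theta\int_0^1 \Re\log f(e^{2\pi is})\,\Im\log f(e^{2\pi is})\,ds$. The stated form \eqref{eq_independence_not_root_of_unity} follows from the pointwise identity $2\Re(z)\Im(z)=\Im(z^2)$ with $z = \log f(e^{2\pi is})$.

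For $x$ a primitive $p$-th root of unity, the sequence $(x^m)_{m\ge 1}$ is exactly $p$-periodic, so no equidistribution machinery is needed. Partitioning the index set by residue classes modulo $p$ and using $\sum_{1\le m \le n,\ m\equiv r}\frac{1}{m} = \frac{1}{p}\log n + O(1)$ for each $r$ immediately delivers the centering $\theta\log(n)\,m(f)(x)$ and the covariance \eqref{eq_independence_root_of_unity}; the hypothesis $f(x^m)\neq 0$ for $1\le m\le p$ guarantees boundedness of the $a_m$, making Lyapunov's condition trivial. In both cases the independence claim reduces to the standard fact that a centered bivariate Gaussian has independent marginals precisely when its off-diagonal covariance vanishes.

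The main obstacle, flagged by the author after Definition~\ref{def_Wn(f)}, is the passage from the dependent cycle counts to the independent Poisson model when $0<\theta<1$, since the coupling error deteriorates in that regime; this is exactly what Lemma~\ref{lem_help_asyp_behavior} is designed to overcome. The remaining quantitative subtlety is securing the $o(\sqrt{\log n})$ error in $\sum_{m\le n} a_m/m$, which is where the finite-type assumption becomes indispensable: it converts Weyl's qualitative statement into a discrepancy bound strong enough to absorb the near-singularities of $\log f(x^m)$ that arise when $x^m$ approaches a zero of $f$ (never attained, thanks to the hypothesis that all zeros of $f$ are roots of unity).
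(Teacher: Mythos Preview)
Your proposal is correct and follows essentially the same route as the paper: reduce to the auxiliary CLT (Theorem~\ref{thm_normal_limit_dist}) via the Feller coupling (Lemma~\ref{lem_help_asyp_behavior}), then verify its hypotheses using discrepancy bounds for $(\{mt\})_m$ in the irrational finite-type case and residue-class splitting in the root-of-unity case, with the covariance identity coming from $2\Re(z)\Im(z)=\Im(z^2)$. The one point the paper makes more explicit than you do is that when $f$ has zeros the function $s\mapsto\log|f(e^{2\pi is})|$ is not of bounded variation, so ordinary Koksma fails and one needs the truncated variant Theorem~\ref{thm_koksma_inequality_2} together with Lemma~\ref{lem_finite_type_approximation} (which turns the finite-type hypothesis and the roots-of-unity hypothesis on the zeros into a quantitative lower bound $|\{mt\}-s_k|\ge K n^{-\gamma}$) to make the truncation error vanish.
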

As promised in the beginning, we now can prove
\begin{corollary}
\label{cor_extension_HKOS}
The random variables $\mathcal{N}_a$ and $\mathcal{N}_b$ in Theorem~\ref{thm_HKOS} are independent.
\end{corollary}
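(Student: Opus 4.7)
The plan is to derive Corollary~\ref{cor_extension_HKOS} as an immediate application of Theorem~\ref{thm_main_result}. In the setting of Theorem~\ref{thm_HKOS} the characteristic polynomial $Z_n(x)$ is the multiplicative class function $W^n(f)(x)$ with $f(y) = 1-y$, the measure on $S_n$ is uniform (so $\theta = 1$), and $x \in S^1$ is not a root of unity and of finite type. The only zero of $f$ lies at the root of unity $y=1$, so the hypotheses of the first bullet of Theorem~\ref{thm_main_result} are satisfied; this identifies $\mathcal{N}_a$ and $\mathcal{N}_b$ (after matching the explicit variance normalization and using the classical fact $\int_0^1 \log(1-e^{2\pi i s})\,ds = 0$ to account for the centering) with the real and imaginary parts of a limiting complex normal $\mathcal{N}$. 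Independence therefore reduces to showing that the covariance in \eqref{eq_independence_not_root_of_unity} is zero.

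To evaluate $\Im \int_0^1 \log^2\bigl(1 - e^{2\pi i s}\bigr)\,ds$ I would use the elementary identity
\begin{align*}
1 - e^{2\pi i s} \;=\; -2i \sin(\pi s)\, e^{i\pi s} \;=\; 2\sin(\pi s)\, e^{i\pi(s - 1/2)}, \qquad s \in (0,1).
\end{align*}
Since $\pi(s-\tfrac{1}{2}) \in (-\tfrac{\pi}{2}, \tfrac{\pi}{2})$, the principal-branch logarithm is
\begin{align*}
\log\bigl(1 - e^{2\pi i s}\bigr) \;=\; \log\bigl(2 \sin(\pi s)\bigr) + i\pi\bigl(s - \tfrac{1}{2}\bigr).
\end{align*}
Squaring and taking imaginary parts gives $\Im \log^2\bigl(1-e^{2\pi i s}\bigr) = 2\pi\bigl(s-\tfrac{1}{2}\bigr) \log\bigl(2\sin(\pi s)\bigr)$, which is the product of an antisymmetric factor and a symmetric factor under $s \mapsto 1-s$ and hence integrates to zero on $[0,1]$. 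The logarithmic singularity of $\log(2\sin(\pi s))$ at the endpoints is integrable, so the manipulation is rigorous.

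Consequently the covariance in \eqref{eq_independence_not_root_of_unity} vanishes, and the independence statement in the first bullet of Theorem~\ref{thm_main_result} yields the claim. There is essentially no obstacle once Theorem~\ref{thm_main_result} is available: the whole argument is the symmetry computation above plus the verification that $f(y)=1-y$ meets the hypotheses of that theorem.
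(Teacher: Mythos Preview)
Your argument is correct and follows the same route as the paper: apply Theorem~\ref{thm_main_result} with $f(y)=1-y$ and check that the covariance integral in \eqref{eq_independence_not_root_of_unity} vanishes by an odd/even symmetry of the real and imaginary parts of $\log(1-e^{2\pi i s})$. The only difference is cosmetic: the paper works on $[-\tfrac12,\tfrac12]$ and uses oddness under $s\mapsto -s$, whereas you work on $[0,1]$ with the substitution $s\mapsto 1-s$; your explicit formula $\log(1-e^{2\pi i s})=\log\bigl(2\sin(\pi s)\bigr)+i\pi(s-\tfrac12)$ is in fact a bit cleaner.
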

%
\begin{proof}
We know that $Z_n(x) = W^n(1-x)$ and can therefore can apply Theorem~\ref{thm_main_result}.
The independence thus follows if we can show that the expression in \eqref{eq_independence_not_root_of_unity} is $0$ for $f(x)=1-x$.
A simple computation shows $\log(1-e^{2 \pi i s}) = \log|1-e^{2 \pi i s}| -i\pi s$
for $s\in [-\frac{1}{2},\frac{1}{2}]$.
We get
\begin{align*}
\frac{1}{2}\Im\left( \int_{0}^1 \log^2(1-e^{2 \pi i s}) \ ds \right)
=&
\int_{-1/2}^{1/2} \Re\bigl(\log(1-e^{2 \pi i s})\bigr) \Im\bigl(\log(1-e^{2 \pi i s})\bigr)\ ds\\
=&
\int_{-1/2}^{1/2} -s\pi \log|1-e^{2 \pi i s}| \ ds
=
0.
\end{align*}
The last integral is $0$ since the integrand is odd.
\end{proof}

\section{Preliminaries}
\label{sec_preparations}

We present in this section a collection of well known definitions and results, which we need for the proof of Theorem~\ref{thm_main_result}.

\subsection{Asymptotic behavior of $C_m^{(n)}$ and the Feller-coupling}
\label{sec_cycle_counts}
%
We follow in this section the book~\cite{barbour}.

\begin{lemma}
\label{lem_cm_rw_ym}
Let $S_n$ be endowed with Ewens measure with parameter $\theta>0$. For each $m\in\N$,
the random variables $C^{(n)}_{m}$ converge as $n\to\infty$  in distribution to
a Poisson distributed random variable $Y_m=Y_{m,\theta}$ with $\E{Y_m}=\frac{\theta}{m}$.
In fact, we have for all $b\in\N$
\begin{align}
 (C_{1}^{(n)},C_{2}^{(n)},\cdots,C^{(n)}_{b}) \xrightarrow{d} (Y_1,Y_2,\cdots,Y_b) \qquad (n\to\infty),
\end{align}
with all $Y_m$ independent.
\end{lemma}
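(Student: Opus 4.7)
The plan is to use the method of joint factorial moments, which is very clean here thanks to the explicit distribution recorded in Lemma~\ref{def_ewens_measure}. The candidate limit is a vector of independent Poisson random variables with means $\theta/m$, a distribution that has all moments and is uniquely determined by them, so it suffices to show that for every fixed $b\in\N$ and every $(k_1,\ldots,k_b)\in\N^b$
\begin{align*}
\Et{\prod_{m=1}^b \bigl(C_m^{(n)}\bigr)_{k_m}} \longrightarrow \prod_{m=1}^b \left(\frac{\theta}{m}\right)^{k_m} \qquad (n\to\infty),
\end{align*}
where $(y)_k := y(y-1)\cdots(y-k+1)$ denotes the falling factorial. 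The right-hand side is exactly the joint factorial moment of independent Poisson variables with means $\theta/m$.

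The explicit computation would be short. Using $(c_m)_{k_m}/c_m! = \one(c_m \geq k_m)/(c_m-k_m)!$ in \eqref{eq_distribution_ewens} and substituting $c_m \mapsto c_m + k_m$ for $m\leq b$, one obtains, with $s := \sum_{m=1}^b m k_m$,
\begin{align*}
\Et{\prod_{m=1}^b \bigl(C_m^{(n)}\bigr)_{k_m}} = \one(s \leq n) \prod_{m=1}^b \left(\frac{\theta}{m}\right)^{k_m} \cdot \frac{\binom{\theta+n-s-1}{n-s}}{\binom{\theta+n-1}{n}},
\end{align*}
where I have used the Cauchy-type identity
$\sum_{\sum m c_m = N} \prod_m (\theta/m)^{c_m}/c_m! = \binom{\theta+N-1}{N}$
(equivalent to $\sum_{\sigma\in S_N}\theta^{\ell(\sigma)}=\theta(\theta+1)\cdots(\theta+N-1)$) to evaluate the residual sum over cycle counts $c_{b+1},\ldots,c_n$ adding up to $n-s$.

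The ratio of binomial coefficients simplifies to
\begin{align*}
\frac{\binom{\theta+n-s-1}{n-s}}{\binom{\theta+n-1}{n}} = \frac{n(n-1)\cdots(n-s+1)}{(n+\theta-1)(n+\theta-2)\cdots(n+\theta-s)} \longrightarrow 1 \qquad (n\to\infty)
\end{align*}
for $s$ fixed, yielding the required limit and hence the joint convergence in distribution to independent Poissons via the method of moments.

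The main obstacle, if there is one, lies in the combinatorial identity for the weighted partition sum; however this is classical, equivalent to the cycle-index generating function for the Ewens sampling formula, and recorded in Chapter~4 of \cite{barbour}. Note that the Feller coupling named in the subsection title gives an alternative, more pathwise proof of the same result and will be the natural tool later in the paper, but for the joint Poisson convergence asserted here the moment method is the most direct route.
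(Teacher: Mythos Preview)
Your argument is correct: the joint factorial moment computation goes through exactly as you wrote, the Cauchy/cycle-index identity you invoke is standard, and the method of moments legitimately applies since the product of Poissons is moment-determined. One tiny wording slip: after the shift $c_m\mapsto c_m+k_m$ the residual sum runs over \emph{all} $c_1,\ldots,c_n\ge 0$ with $\sum m c_m=n-s$, not just $c_{b+1},\ldots,c_n$; but your displayed formula is already the right one, so this is cosmetic.

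As for the comparison: the paper does not actually prove this lemma at all---it simply refers the reader to \cite[Section~4]{barbour}. So there is no ``paper's own proof'' to compare with. Your factorial-moment argument is in fact one of the classical proofs recorded there; the Feller coupling (introduced immediately afterwards in the paper and used essentially in Lemma~\ref{lem_help_asyp_behavior}) provides an alternative, giving not only distributional convergence but the quantitative $L^1$ bounds that the paper actually needs later. For the bare statement of this lemma, though, your route is the cleanest.
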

\begin{proof}
  See \cite[Section~4]{barbour}.
\end{proof}
One of the problems of convergence in distribution of a sequence $(X_n)_{n\in\N}$ is that usually
all $X_n$ are defined on different probability spaces and it is thus very difficult to compare them directly. This is the case for $C_m^{(n)}$ and $C_m^{(n+1)}$.
Fortunately, the Feller coupling constructs for each $\theta>0$ a probability space and new random variables
$C_m^{(n)}$ and $Y_m$ on this space, which have the same distributions as the $C_m^{(n)}$ and $Y_m$ above
and can easily be compared.

The construction works as follows:
Let $\xi:=(1\xi_2\xi_3\xi_4\xi_5\cdots)$ be a sequence of independent Bernoulli random variables
with $\E{\xi_m}=\frac{\theta}{\theta+(m-1)}$.
An $m-$spacing is a sequence of $m-1$ consecutive zeroes in $\xi$ or its truncations:
$$1\underbrace{0\cdots0}_{m-1\text{ times}}1.$$
\begin{definition}
\label{def_cm_feller}
Let $C_m^{(n)}(\xi)$ be the number of m-spacings in $1\xi_2\cdots \xi_n 1$.
We define $Y_m(\xi)$ to be the number of m-spacings in the whole sequence $\xi$.
\end{definition}
\begin{theorem}
\label{thm_feller_conv}
We have
          \begin{itemize}
            \item The above-constructed $C_m^{(n)}(\xi)$ have the same distribution as the $C_m^{(n)}(\lambda)$ in Definition~\ref{def_cm_cycle}.
            \item $Y_m(\xi)$ is a.s. finite and Poisson distributed with $\E{Y_m(\xi)}=\frac{\theta}{m}$.
            \item All $Y_m(\xi)$ are independent.
            \item We have 
            \begin{align}
              \Et{\left|C_m^{(n)}(\xi)-Y_m(\xi)\right|}\leq \begin{cases}
                                                             \frac{\theta(\theta+1)}{\theta+n}   & \text{ if }\theta \geq 1,\\
							     \frac{\theta(\theta+1)}{\theta+n-m} & \text{ if } 0<\theta<1.
                                                            \end{cases}
            \end{align}
            \item For any fixed $b\in\N$,
            $$\Pb{(C_1^{(n)}(\xi),\cdots, C_b^{(n)}(\xi))\neq(Y_1(\xi),\cdots ,Y_b(\xi))}\to 0\ (n\to\infty).$$
          \end{itemize}
\end{theorem}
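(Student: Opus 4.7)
The plan is to establish the five assertions in order, leveraging throughout the independence of the Bernoulli sequence $\xi=(1,\xi_2,\xi_3,\ldots)$ with $\E{\xi_j}=\theta/(\theta+j-1)$. For the first bullet I would compute the joint distribution of $(C_1^{(n)}(\xi),\ldots,C_n^{(n)}(\xi))$ directly. The positions of the ones in the augmented word $1\xi_2\cdots\xi_n 1$ form a set $I=\{1=i_0<i_1<\cdots<i_k=n+1\}\subset\{1,\ldots,n+1\}$, and the consecutive gaps $i_j-i_{j-1}$ are precisely the spacing lengths. For a fixed profile $(c_1,\ldots,c_n)$ with $\sum_m m\, c_m=n$ and $k=\sum_m c_m$, the number of such $I$ realising this profile is the multinomial coefficient $k!/\prod_m c_m!$, and the probability of a specific $I$ factors through $\prod_{j=2}^{n}(\theta+j-1)^{-1}$ times a product in $\theta$ and in the integers $j-1$ with $j\notin I$. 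Summing over all $I$ with the prescribed profile rearranges these factors into $\prod_m(\theta/m)^{c_m}/c_m!$ times the normaliser $1/\binom{\theta+n-1}{n}$, matching Lemma~\ref{def_ewens_measure}.

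For almost sure finiteness, the Poisson law and joint independence across different $m$, write $Y_m=\sum_{i\ge 1}B_i^{(m)}$ with $B_i^{(m)}=\xi_i\,\xi_{i+m}\prod_{k=1}^{m-1}(1-\xi_{i+k})$. A telescoping calculation using $\E{\xi_j}=\theta/(\theta+j-1)$ and $1-\E{\xi_j}=(j-1)/(\theta+j-1)$ shows $\sum_i\E{B_i^{(m)}}=\theta/m$, which by monotone convergence gives $Y_m<\infty$ a.s.\ with the correct mean. I would then compute the joint factorial moments $\E{(Y_1)_{k_1}\cdots(Y_b)_{k_b}}$, where $(y)_k$ denotes the falling factorial: expanding the product via the $B_i^{(m)}$'s and using independence of the $\xi_j$'s reduces the calculation to another telescoping sum (the disjointness required for a nonzero contribution is automatic because each $B_i^{(m)}$ forces $\xi_i=\xi_{i+m}=1$ and zeros in between, so overlapping indicators annihilate), which evaluates to $\prod_m(\theta/m)^{k_m}$ — the joint factorial moment of independent Poisson$(\theta/m)$ variables, giving the law and the independence simultaneously.

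The principal obstacle is the fourth bullet, because one has to track two \emph{opposite} boundary errors simultaneously: spacings of $1\xi_2\cdots\xi_n 1$ that use the artificial trailing $1$ and hence have no counterpart in $\xi$, and $m$-spacings of the infinite word $\xi$ whose starting index lies in $\{n-m+1,\ldots,n\}$ and are therefore missed by the truncation. Each is a short sum of indicators involving the $\xi_j$'s with $j$ close to $n$, and its expectation is bounded by summing $\E{\xi_j}=\theta/(\theta+j-1)$ over this range, together with at most one extra factor $\theta/(\theta+n)$ coming from the conditional independence of the terminating $1$. The dichotomy between $\theta\ge 1$ and $0<\theta<1$ arises from where this sum attains its maximum: for $\theta\ge 1$ the worst denominator is $\theta+n$, while for $0<\theta<1$ the weights $\theta/(\theta+j-1)$ are no longer bounded by $\theta/(\theta+n)$ for $j$ in the critical range $\{n-m+1,\ldots,n\}$, forcing the looser bound $\theta(\theta+1)/(\theta+n-m)$. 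The final bullet is then immediate: Markov's inequality applied to the bound in (4) yields $\Pb{C_m^{(n)}(\xi)\ne Y_m(\xi)}\to 0$ for each fixed $m$, since both variables are integer-valued, and a union bound over $m=1,\ldots,b$ completes the proof.
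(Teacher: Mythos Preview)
The paper does not prove this theorem at all: its entire proof reads ``See \cite[Chapter~4]{barbour} and \cite[Theorem~2]{MR1177897}.'' What you have written is therefore not a variant of the paper's argument but a self-contained sketch of the classical Feller-coupling estimates from those references. Bullets one through three and bullet five are handled correctly: the direct enumeration matching the Ewens sampling formula, the factorial-moment identification of the Poisson law and independence, and the Markov/union-bound deduction of the last item are all standard and your outline is sound.

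The one place where your sketch is imprecise is bullet four. You correctly identify the two sources of discrepancy --- the artificial $m$-spacing created by the appended $1$ at position $n+1$, and the $m$-spacings of the infinite word that straddle position $n$ --- but your explanation of why the bound bifurcates at $\theta=1$ is not right. The weights $\theta/(\theta+j-1)$ are monotone decreasing in $j$ for every $\theta>0$, so ``where the sum attains its maximum'' does not distinguish the two regimes. In the actual computation (Arratia--Tavar\'e), after writing $\left|C_m^{(n)}-Y_m\right|$ as a sum of at most two indicators, one evaluates the product $\prod_{j}(1-\E{\xi_j})=\prod_j (j-1)/(\theta+j-1)$ over the relevant range of $j$'s near $n$; this telescopes into a ratio of rising factorials, and it is the monotonicity in $m$ of the resulting expression $\Gamma(n+\theta-m)\Gamma(n)/\bigl(\Gamma(n+\theta)\Gamma(n-m)\bigr)$-type factor that flips direction at $\theta=1$. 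For $\theta\ge 1$ this factor is maximised at $m=1$, collapsing the bound to $\theta(\theta+1)/(\theta+n)$ uniformly in $m$; for $0<\theta<1$ it is increasing in $m$ and one can only bound it in terms of $n-m$, giving $\theta(\theta+1)/(\theta+n-m)$. Your boundary-error decomposition would lead you there, but the stated mechanism for the dichotomy needs to be replaced by this monotonicity argument.
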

\begin{proof}
See \cite[Chapter 4]{barbour} and \cite[Theorem 2]{MR1177897}.
\end{proof}
We write $C_m^{(n)}$ and $Y_m$ for both sets of random variables and do not distinguish them anymore.\\
\subsection{Elementary analysis}
\label{sec_elementary_analysis}
We give here some simple and well known results from analysis.
We state them without further comments.
\begin{lemma}[Abel's partial summation]
\label{lem_abel_summation}
Let $a_1,\cdots,a_n,b_1,\cdots,b_n$ be complex numbers. Then
\begin{align}
\sum_{m=1}^n a_m b_m = A_n b_n + \sum_{m=1}^{n-1} A_m (b_{m+1}-b_m)
\label{eq_abel_summation}
\end{align}
with $A_m:= \sum_{k=1}^m a_k$ for $m\geq 1$ and $A_0:=0$.
%
\end{lemma}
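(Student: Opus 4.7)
The plan is pure telescoping; no result from earlier in the excerpt is needed. Set $A_0 := 0$ as in the statement and use the discrete analogue of the fundamental theorem of calculus, $a_m = A_m - A_{m-1}$ for $1 \leq m \leq n$. Substituting into the left-hand side gives
\[
\sum_{m=1}^n a_m b_m \;=\; \sum_{m=1}^n A_m b_m \;-\; \sum_{m=1}^n A_{m-1} b_m.
\]

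Next I would reindex the second sum via $k = m - 1$, rewriting it as $\sum_{k=0}^{n-1} A_k b_{k+1}$; the $k = 0$ contribution drops out because $A_0 = 0$, leaving $\sum_{m=1}^{n-1} A_m b_{m+1}$. Then I would peel off the $m = n$ term from the first sum to expose the boundary contribution $A_n b_n$, and combine what remains with the re-indexed second sum over the common range $1 \leq m \leq n - 1$. This produces a single sum with summand $A_m (b_m - b_{m+1})$, yielding
\[
\sum_{m=1}^n a_m b_m \;=\; A_n b_n + \sum_{m=1}^{n-1} A_m(b_m - b_{m+1}) \;=\; A_n b_n - \sum_{m=1}^{n-1} A_m(b_{m+1} - b_m).
\]

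The second form is the identity one normally labels Abel summation, and it matches the statement \eqref{eq_abel_summation} in everything except the sign in front of the remaining sum: telescoping unambiguously produces $-\sum A_m(b_{m+1}-b_m)$, whereas the lemma displays $+\sum A_m(b_{m+1}-b_m)$. A quick check with $n = 2$ confirms that the literal statement fails (it gives $2 a_1 b_2 + a_2 b_2 - a_1 b_1$ instead of $a_1 b_1 + a_2 b_2$), so I would treat the $+$ as a typographical slip and adopt the derivation above as the intended content to be used downstream. No step presents any obstacle; the only bookkeeping care is the index shift and the use of $A_0 = 0$ to suppress a spurious boundary term.
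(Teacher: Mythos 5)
Your telescoping argument is the standard proof of Abel summation, and it is correct; the paper states this lemma without proof (as a "well known result from analysis"), so there is no alternative argument to compare against. Your observation about the sign is also right: the identity should read $\sum_{m=1}^n a_m b_m = A_n b_n - \sum_{m=1}^{n-1} A_m (b_{m+1}-b_m)$, the plus sign in \eqref{eq_abel_summation} is a typographical slip (as your $n=2$ check confirms), and it is the corrected form that is actually used in the verification of Lemma~\ref{lem_analytic_prop_1}.
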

%
We can use this to prove
%
%
\begin{lemma}
\label{lem_analytic_prop_1}
Let $(a_m)_{m=1}^\infty$ be a complex sequence.
If 
\begin{align}
  \frac{1}{n}\sum_{m=1}^n a_m = E +O(n^{-\delta})
\end{align}
for some $\delta>0$, then there exists a $K\in\R$ such that
\begin{align}
  \sum_{m=1}^n \frac{a_m}{m}
  &=
  E \log(n) + K + O(n^{-\delta}),
  \label{eq_part_sum_1}
\\
  \sum_{n/2< m \leq n} \frac{a_m}{m}
  &=
  E \log(2) + O(n^{-\delta}).
  \label{eq_part_sum_2}
%
\end{align}
\end{lemma}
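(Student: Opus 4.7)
The plan is to apply Abel's partial summation (Lemma~\ref{lem_abel_summation}) with $a_m$ as the summands and $b_m = 1/m$, then exploit the hypothesis on the averages of $a_m$ to identify a main term of size $E\log(n)$, an absolutely convergent constant, and a tail of size $O(n^{-\delta})$.

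First I would set $A_n := \sum_{k=1}^n a_k$ so that the hypothesis becomes $A_n = nE + R_n$ with $R_n = O(n^{1-\delta})$. Writing $a_m = E + (a_m - E)$, I split
\begin{align*}
\sum_{m=1}^n \frac{a_m}{m} = E\, H_n + \sum_{m=1}^n \frac{a_m - E}{m},
\end{align*}
where $H_n = \log n + \gamma + O(1/n)$ is the $n$-th harmonic number. Applying Lemma~\ref{lem_abel_summation} to the second sum (with partial sums $B_m = A_m - mE = R_m$) gives
\begin{align*}
\sum_{m=1}^n \frac{a_m - E}{m} = \frac{R_n}{n} + \sum_{m=1}^{n-1} \frac{R_m}{m(m+1)}.
\end{align*}
The first piece is $O(n^{-\delta})$ directly. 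For the series on the right, the summand is $O(m^{-1-\delta})$, hence the series converges absolutely to some constant
\begin{align*}
K_1 := \sum_{m=1}^{\infty} \frac{R_m}{m(m+1)},
\end{align*}
and the tail $\sum_{m \geq n} R_m/(m(m+1))$ is bounded by $O\bigl(\sum_{m \geq n} m^{-1-\delta}\bigr) = O(n^{-\delta})$. Combining everything yields \eqref{eq_part_sum_1} with $K = E\gamma + K_1$ (the $O(1/n)$ error from $H_n$ being absorbed into $O(n^{-\delta})$ in the natural range $\delta \in (0,1]$; otherwise replace $\delta$ by $\min(\delta,1)$).

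For \eqref{eq_part_sum_2} I would simply subtract \eqref{eq_part_sum_1} from itself at heights $n$ and $\lfloor n/2 \rfloor$:
\begin{align*}
\sum_{n/2 < m \leq n} \frac{a_m}{m}
= \sum_{m=1}^n \frac{a_m}{m} - \sum_{m=1}^{\lfloor n/2 \rfloor} \frac{a_m}{m}
= E\bigl(\log n - \log(n/2)\bigr) + O(n^{-\delta})
= E\log 2 + O(n^{-\delta}),
\end{align*}
since the constant $K$ cancels and both error terms are $O(n^{-\delta})$.

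There is no real obstacle here; everything is a standard Abel summation argument. The only place that requires a small amount of care is verifying that the remainder series $\sum_m R_m/(m(m+1))$ converges absolutely (which is immediate from $|R_m| = O(m^{1-\delta})$ with $\delta > 0$) and identifying its tail bound, which is what produces the clean $O(n^{-\delta})$ rate in both conclusions.
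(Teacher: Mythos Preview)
Your argument is correct and is exactly what the paper intends: its proof of Lemma~\ref{lem_analytic_prop_1} is the single sentence ``follows from Lemma~\ref{lem_abel_summation} and straightforward verification,'' and you have carried out precisely that verification---Abel summation with $b_m=1/m$, identification of the convergent tail $\sum R_m/(m(m+1))$, and subtraction at heights $n$ and $\lfloor n/2\rfloor$ for \eqref{eq_part_sum_2}. Your remark about $\min(\delta,1)$ is the only subtlety, and you handle it correctly.
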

%
\begin{proof}
The lemma follows from Lemma~\ref{eq_abel_summation} and straightforward verification.
%
\end{proof}
Finally we need
\begin{lemma}[H\"older inequality]
\label{lem_holder_inequality}
 Let $(a_m)_{m=1}^n$ and $(b_m)_{m=1}^n$ be finite sequences and  $p,q\geq 1$ such that $\frac{1}{p}+\frac{1}{q} =1$. Then
\begin{align}
  \sum_{m=1}^n |a_m b_m|
  \leq
  \left(  \sum_{m=1}^n |a_m|^p \right)^{1/p} \left(  \sum_{m=1}^n |b_m|^q \right)^{1/q}.
\end{align}
\end{lemma}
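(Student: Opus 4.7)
The plan is the standard reduction of H\"older's inequality to a pointwise scalar inequality (Young's inequality), followed by normalization. First I handle the degenerate cases: if $\sum_{m=1}^n |a_m|^p = 0$ or $\sum_{m=1}^n |b_m|^q = 0$, then all the $a_m$, respectively all the $b_m$, vanish, and both sides of the asserted inequality are $0$. The endpoint case $p=1$ (with $q=\infty$) is not part of the statement as written, since the relation $\frac{1}{p}+\frac{1}{q}=1$ with both $p,q\geq 1$ finite forces $p,q>1$.

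The key lemma I would establish first is Young's inequality: for any real $s,t\geq 0$,
\begin{align*}
st \leq \frac{s^p}{p} + \frac{t^q}{q}.
\end{align*}
If $s=0$ or $t=0$ this is trivial, so assume $s,t>0$. Using the concavity of $\log$ applied to the convex combination with weights $\frac{1}{p},\frac{1}{q}$ of the points $s^p$ and $t^q$, one has
\begin{align*}
\log\!\left(\frac{s^p}{p}+\frac{t^q}{q}\right) \geq \frac{1}{p}\log(s^p)+\frac{1}{q}\log(t^q) = \log(s) + \log(t) = \log(st),
\end{align*}
and exponentiating gives the claim.

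To conclude, set
\begin{align*}
A := \left(\sum_{m=1}^n |a_m|^p\right)^{1/p}, \qquad B := \left(\sum_{m=1}^n |b_m|^q\right)^{1/q},
\end{align*}
and assume by the first paragraph that $A,B>0$. Apply Young's inequality with $s=|a_m|/A$ and $t=|b_m|/B$ for each $m$, then sum over $m=1,\dots,n$ to obtain
\begin{align*}
\frac{1}{AB}\sum_{m=1}^n |a_m b_m|
\leq
\frac{1}{p}\cdot \frac{1}{A^p}\sum_{m=1}^n |a_m|^p + \frac{1}{q}\cdot \frac{1}{B^q}\sum_{m=1}^n |b_m|^q
=
\frac{1}{p}+\frac{1}{q} = 1.
\end{align*}
Multiplying through by $AB$ yields the stated inequality. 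There is no genuine obstacle here; the only nontrivial content is the one-line convexity argument for Young's inequality, and the rest is normalization.
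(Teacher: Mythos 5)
Your proof is correct and complete: the reduction to Young's inequality via concavity of $\log$, the normalization with $A$ and $B$, and the treatment of the degenerate cases and of the excluded endpoint $p=1$ are all handled properly. Note, however, that the paper itself offers no proof to compare against --- Lemma~\ref{lem_holder_inequality} is listed in Section~\ref{sec_elementary_analysis} among ``simple and well known results from analysis'' stated without further comment --- so your argument simply supplies the standard textbook proof that the author chose to omit.
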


%

\subsection{Uniformly distributed sequences}
\label{sec_uniform_dist_seq}

We introduce in this section uniformly distributed sequences and state some interesting properties.
We follow the book~\cite{kuipers-niederreiter-74} and
omit most of the proofs since they are not difficult.

We begin with the definition of uniformly distributed sequences.
\begin{definition}
\label{def_d_uniform_dist_in_[a,b]}
Let $\ts=\left(t_m\right)_{m=1}^\infty$ be a sequence in the interval $[0,1]$. We set
\begin{align}
A_n([\alpha,\beta]) = A_n([\alpha,\beta],\ts) := \# \set{1\leq m \leq n: t_m\in [\alpha,\beta] }.
\end{align}
The sequence $\ts=\left(t_m\right)_{m\in\N}$ is called uniformly distributed in $[0,1]$ if
\begin{align}
\lim_{n\to\infty}
\left|\frac{A_n([\alpha,\beta])}{n}-(\beta-\alpha) \right|
=
0
\label{eq_unifrom_conv_intervall}
\end{align}
for each $\alpha,\beta$ with $0\leq \alpha \leq \beta \leq 1$.
\end{definition}
%
%
The following theorem shows that the name uniformly distributed is well chosen.
\begin{theorem}
\label{thm_equidist_integral_convergence}
Let $h: [0,1]\to \C$ be a proper Riemann-integrable function and $\ts = \left(t_m\right)_{m=1}^\infty$
be a uniformly distributed sequence, then

\begin{align}
\frac{1}{n} \sum_{m=1}^n h(t_m) \to \int_{0}^1 h(s)  \ ds.
\label{eq_thm_equidist_integral_convergence}
\end{align}

\end{theorem}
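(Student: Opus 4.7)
The plan is to reduce the claim to real-valued $h$ (by splitting $h = \Re(h) + i\,\Im(h)$ and using linearity of both sides) and then to apply the standard three-step approximation argument: verify the identity first for indicator functions of subintervals, extend by linearity to step functions, and finally use Riemann-integrability to squeeze an arbitrary $h$ between step functions.

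First I would take $h = \one_{[\alpha,\beta]}$. In this case $\sum_{m=1}^n h(t_m) = A_n([\alpha,\beta],\ts)$, so \eqref{eq_thm_equidist_integral_convergence} becomes
\begin{align*}
\frac{A_n([\alpha,\beta])}{n} \longrightarrow \beta-\alpha = \int_0^1 \one_{[\alpha,\beta]}(s)\, ds,
\end{align*}
which is exactly the defining property \eqref{eq_unifrom_conv_intervall}. By linearity, \eqref{eq_thm_equidist_integral_convergence} then holds for every step function $\varphi = \sum_{k=1}^N c_k \one_{I_k}$ with $I_k\subseteq [0,1]$ subintervals.

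Second, since $h$ is proper Riemann-integrable, $h$ is bounded and for every $\eps>0$ there exist step functions $\varphi_\eps^-, \varphi_\eps^+$ on $[0,1]$ with $\varphi_\eps^- \leq h \leq \varphi_\eps^+$ and
\begin{align*}
\int_0^1 \bigl(\varphi_\eps^+(s) - \varphi_\eps^-(s)\bigr)\, ds < \eps
\end{align*}
(this is the Darboux characterization of Riemann-integrability). For each $n$ one has
\begin{align*}
\frac{1}{n}\sum_{m=1}^n \varphi_\eps^-(t_m) \;\leq\; \frac{1}{n}\sum_{m=1}^n h(t_m) \;\leq\; \frac{1}{n}\sum_{m=1}^n \varphi_\eps^+(t_m).
\end{align*}
By the step-function case, the outer sums converge as $n\to\infty$ to $\int_0^1 \varphi_\eps^\pm(s)\, ds$, and both of these lie within $\eps$ of $\int_0^1 h(s)\, ds$. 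Hence
\begin{align*}
\int_0^1 h(s)\, ds - \eps \;\leq\; \liminf_{n\to\infty} \frac{1}{n}\sum_{m=1}^n h(t_m) \;\leq\; \limsup_{n\to\infty} \frac{1}{n}\sum_{m=1}^n h(t_m) \;\leq\; \int_0^1 h(s)\, ds + \eps.
\end{align*}
Letting $\eps\to 0$ completes the proof.

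There is no serious obstacle: the only point requiring a little care is making sure the sandwich inequalities above are available, which relies precisely on $h$ being proper (i.e.\ bounded and Riemann-integrable in the Darboux sense), so that both the upper and lower step-function approximations exist and have integrals sandwiching $\int_0^1 h(s)\, ds$ to within $\eps$. Everything else is routine.
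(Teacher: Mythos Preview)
Your argument is correct and is precisely the approach the paper indicates: the paper verifies the statement for $h=\one_{[\alpha,\beta]}$ directly from \eqref{eq_unifrom_conv_intervall} and then says ``for $h$ an arbitrary, proper Riemann-integrable function, one can use an approximation argument,'' which is exactly the Darboux sandwich you have written out in detail.
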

\begin{proof}
If $h(t) = \one_{[\alpha,\beta]}(t)$, then \eqref{eq_thm_equidist_integral_convergence} follows immediately from \eqref{eq_unifrom_conv_intervall}. For $h$ an arbitrary, proper Riemann-integrable function, one can uses an approximation argument.
\end{proof}

We would like to emphasize at that this point, that \eqref{eq_thm_equidist_integral_convergence} does not have to be true for improper Riemann integrable functions. One thus needs further assumptions to handle functions like $\log(t)$ or $t^{-1/2}$. This is indeed the cause why we need in Theorem~\ref{thm_main_result} the finite type condition.

%
We also need the discrepancy of a sequence.
%
\begin{definition}
\label{def_discrepcy}
Let $\left(t_m \right)_{m=1}^\infty$ be a sequence in $[0,1]$. We then define
\begin{align}
D_n &= D_n(\ts)
:=
\sup_{0 \leq \alpha \leq \beta \leq 1}
\left| \frac{A_n([\alpha,\beta])}{n}-(\beta-\alpha) \right|,
\label{eq_def_discrep}\\
D^*_n &= D^*_n(\ts)
:=
\sup_{0 \leq \beta \leq 1}
\left| \frac{A_n([0,\beta])}{n}-\beta \right|.
\label{eq_def_discrep_2}
\end{align}
We call $D_n$ the discrepancy and $D^*_n$ the $*$-discrepancy of the sequence $\ts$.
%
\end{definition}
%
%
%
It is easy to see that $D^*_n \leq D_n \leq 2 D^*_n$ and therefore $D_n$ and $D^*_n$ are equivalent.
We prefer to work with $D^*_n$ since we have a more explicit expression for it.
\begin{lemma}
\label{lem_exprssion_for_discrepcy}
Let $n$ be fixed and $\ts=\left(t_m\right)_{m\in\N}$ be a sequence in $[0,1]$ with $t_1 \leq\cdots \leq t_n$.
Then
\begin{align}
D_n^*(\ts)
=
\max_{1\leq m\leq n} \max\left(\left|t_m-\frac{m}{n}\right|,\left|t_m-\frac{m-1}{n}\right| \right)
\label{eq_discrepancy_simple_ex}
\end{align}
\end{lemma}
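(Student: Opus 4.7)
My plan is to analyze the function $g(\beta) := A_n([0,\beta])/n - \beta$ on $[0,1]$ directly and identify where $|g|$ attains (or approaches) its supremum. Setting $t_0 := 0$ and $t_{n+1} := 1$, the sorted assumption on $\ts$ guarantees that $A_n([0,\beta]) = m$ whenever $\beta \in [t_m, t_{m+1})$; hence $g$ is continuous and affine with slope $-1$ on each such piece and jumps upward (by a non-negative multiple of $1/n$) at each $t_m$. On the piece $[t_m, t_{m+1})$, $|g|$ therefore attains its supremum either at the left endpoint $\beta = t_m$, where $g(t_m) = m/n - t_m$, or in the left limit $\beta \to t_{m+1}^-$, where $g(\beta) \to m/n - t_{m+1}$. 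The endpoint $\beta = 1$ needs no separate attention since $g(1) = 0$.

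Taking the supremum over all pieces then gives
\begin{align*}
D_n^*(\ts) = \max_{0 \le m \le n} \max\!\left(\left|\tfrac{m}{n} - t_m\right|,\ \left|\tfrac{m}{n} - t_{m+1}\right|\right).
\end{align*}
I then reorganize the two families. The first, $|m/n - t_m|$ for $m = 0, \dots, n$, contributes $0$ at $m = 0$ and $|t_m - m/n|$ for $m = 1, \dots, n$. The second, $|m/n - t_{m+1}|$ for $m = 0, \dots, n$, contributes $t_1 = |t_1 - 0/n|$ at $m = 0$, then $0$ at $m = n$, and — after the reindexing $k := m+1$ — equals $|t_k - (k-1)/n|$ for $k = 1, \dots, n$. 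Only the terms $|t_m - m/n|$ and $|t_m - (m-1)/n|$ for $m = 1, \dots, n$ remain, which is exactly the claimed formula.

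No serious obstacle is anticipated: the lemma is essentially a bookkeeping exercise once one draws the graph of $g$. The only mild care is that ties $t_m = t_{m+1}$ produce degenerate intervals (which contribute nothing, because the corresponding maximum terms coincide with maxima arising from neighboring pieces), and one must respect the half-open/closed convention implicit in $A_n([0,\beta]) = \#\{i : t_i \le \beta\}$ so that the piecewise analysis and the limit at $\beta = 1$ are consistent.
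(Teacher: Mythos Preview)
Your proposal is correct; the paper itself omits the proof of this lemma, deferring to Kuipers--Niederreiter as a standard reference. Your piecewise analysis of the function $\beta \mapsto A_n([0,\beta])/n - \beta$ is exactly the classical argument found there.
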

%
%
%

An important fact is that Theorem~\ref{thm_equidist_integral_convergence}, the discrepancy and uniformly distributed sequences are closely related. We have
\begin{lemma}
\label{lem_uniform_equiv_discrepancy}
Let $\ts=\left(t_m\right)_{m=1}^\infty$ be a sequence in $[0,1]$. The following conditions are equivalent
\begin{enumerate}
 \item $\ts$ is uniformly distributed,
 \item $\lim_{n\to\infty} D_n(\ts)=0$,
 \item Let $h:[0,1] \to \C$ be a proper Riemann-integrable function. Then
$$
\frac{1}{n} \sum_{m=1}^n h(t_m)  \to \int_0^1 h(s)\ ds \ \text{ for }n\to\infty.
$$
\end{enumerate}
\end{lemma}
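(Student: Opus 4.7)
My plan is to establish the cycle $(2) \Rightarrow (1) \Rightarrow (3) \Rightarrow (1)$ and then add the missing link $(1) \Rightarrow (2)$. Three of these four implications will be essentially immediate. For $(2) \Rightarrow (1)$, I just observe the trivial pointwise bound $|A_n([\alpha,\beta])/n - (\beta-\alpha)| \le D_n(\ts)$, which is valid for every fixed interval and hence forces \eqref{eq_unifrom_conv_intervall} once $D_n(\ts) \to 0$. For $(3) \Rightarrow (1)$ I plug $h = \one_{[\alpha,\beta]}$ into (3); this indicator is proper Riemann-integrable with integral $\beta-\alpha$, so the conclusion is exactly \eqref{eq_unifrom_conv_intervall}. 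Finally, $(1) \Rightarrow (3)$ has already been proven above as Theorem~\ref{thm_equidist_integral_convergence}.

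The substantive step is $(1) \Rightarrow (2)$, where one must promote the pointwise convergence $A_n([\alpha,\beta])/n \to \beta-\alpha$ (for each fixed $[\alpha,\beta]$) to uniform convergence over all intervals. I will handle this with a standard finite-partition sandwich. Given $\epsilon > 0$, I pick $N$ with $2/N < \epsilon/2$ and partition $[0,1]$ into the $N$ equal subintervals $I_k := [k/N,(k+1)/N]$. Applying (1) separately to each of these finitely many intervals yields an $n_0$ such that $|A_n(I_k)/n - 1/N| < \epsilon/(2N)$ for all $n \ge n_0$ and all $0 \le k < N$. For an arbitrary $[\alpha,\beta] \subseteq [0,1]$, setting $k_1 := \lceil N\alpha \rceil$ and $k_2 := \lfloor N\beta \rfloor$ gives the sandwich
\[
\bigcup_{k_1 \le k < k_2} I_k \;\subseteq\; [\alpha,\beta] \;\subseteq\; \bigcup_{k_1 - 1 \le k \le k_2} I_k,
\]
in which the outer union exceeds the inner one by at most two subintervals, of combined length $\le 2/N$. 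Combining the monotonicity of the counting function with the uniform bound on the $I_k$'s, and summing at most $N$ terms each carrying error $\epsilon/(2N)$, then yields $|A_n([\alpha,\beta])/n - (\beta-\alpha)| < 2/N + \epsilon/2 < \epsilon$ uniformly in $[\alpha,\beta]$, hence $D_n(\ts) \le \epsilon$ for every $n \ge n_0$.

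The main (and only) obstacle is precisely this promotion from pointwise to uniform control in $(1) \Rightarrow (2)$; because only finitely many intervals need to be controlled simultaneously it is not a serious obstacle, and the only care required is in balancing the two error contributions---the width $2/N$ of the missed boundary pieces against the uniform $\epsilon/(2N)$ rate on the partition.
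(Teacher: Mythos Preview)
The paper does not actually prove this lemma; it is one of several results in Section~\ref{sec_uniform_dist_seq} stated without proof, the reader being referred to Kuipers--Niederreiter. Your argument is the standard textbook one and is essentially correct.

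One small technical wrinkle in your $(1)\Rightarrow(2)$ step: the closed subintervals $I_k=[k/N,(k+1)/N]$ overlap at their shared endpoints, so $\sum_k A_n(I_k)$ may strictly exceed $A_n\bigl(\bigcup_k I_k\bigr)$, and the lower sandwich bound obtained by \emph{summing} the individual estimates does not go through verbatim. The cleanest repair is to apply (1) directly to the finitely many grid intervals $[j/N,k/N]$, $0\le j\le k\le N$, obtaining $\bigl|A_n([j/N,k/N])/n-(k-j)/N\bigr|<\epsilon/2$ simultaneously for all such pairs and all large $n$; your sandwich then yields $D_n(\ts)<2/N+\epsilon/2<\epsilon$ with no summation of errors. (Alternatively, use half-open $I_k$.) Also clip the outer union at $0$ and $1$ to avoid the index $k_1-1=-1$ when $\alpha=0$. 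These are cosmetic fixes, not real gaps.
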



We have introduced the discrepancy since it allows us to estimate the rate of convergence
in Theorem~\ref{thm_equidist_integral_convergence}. We have
\begin{theorem}[Koksma's inequality]
\label{thm_koksma_inequality}
Let $h: [0,1]\to \C$ be a function of bounded variation $V(h)$, and $\ts=\left(t_m\right)_{m\in\N}$ be a sequence in $[0,1]$. Then
\begin{align}
\left|
\frac{1}{n} \sum_{m=1}^n h(t_m) - \int_{0}^1 h(s)  \ ds
\right|
\leq
V(h)D_n^*(\ts).
\label{eq_thm_koksma_inequality}
\end{align}
%
\end{theorem}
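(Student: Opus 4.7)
The plan is to realise both $\frac{1}{n}\sum_m h(t_m)$ and $\int_0^1 h(s)\,ds$ as Riemann--Stieltjes integrals against the empirical and uniform distribution functions on $[0,1]$, respectively, and then to compare them via integration by parts. Since the left-hand side of \eqref{eq_thm_koksma_inequality} is symmetric in $t_1,\dots,t_n$, I may first reorder so that $t_1\leq t_2\leq\cdots\leq t_n$. Setting
\begin{align*}
F_n(s) := \frac{1}{n}\#\{1\leq m\leq n : t_m\leq s\}, \qquad F(s) := s, \qquad s\in[0,1],
\end{align*}
one has $\frac{1}{n}\sum_{m=1}^n h(t_m) = \int_0^1 h(s)\,dF_n(s)$ and $\int_0^1 h(s)\,ds = \int_0^1 h(s)\,dF(s)$, so the quantity to estimate is exactly $\int_0^1 h(s)\,d(F_n(s)-F(s))$.

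Next I would apply Riemann--Stieltjes integration by parts to this difference:
\begin{align*}
\int_0^1 h(s)\,d(F_n-F)(s) = \bigl[h(s)(F_n(s)-F(s))\bigr]_0^1 - \int_0^1 (F_n(s)-F(s))\,dh(s).
\end{align*}
Both boundary terms vanish: at $s=1$ one has $F_n(1)=1=F(1)$, and at $s=0$ one has $F_n(0^-)=0=F(0^-)$ with the standard right-continuous convention. What remains is bounded by pulling the sup of $|F_n-F|$ out of the Stieltjes integral, giving
\begin{align*}
\left|\int_0^1 (F_n(s)-F(s))\,dh(s)\right|
\leq \sup_{s\in[0,1]}|F_n(s)-F(s)| \cdot V(h).
\end{align*}
The supremum on the right is precisely $D_n^*(\ts)$ by \eqref{eq_def_discrep_2}, since $F_n(\beta)=A_n([0,\beta])/n$ and $F(\beta)=\beta$. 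Combining these steps yields \eqref{eq_thm_koksma_inequality}.

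The main technical obstacle is justifying the integration by parts when $h$ has jumps: the BV function $h$ and the step function $F_n-F$ can share discontinuities at the points $t_m$, and Riemann--Stieltjes integrals are ill-defined at common discontinuities. I would circumvent this by working throughout with right-continuous versions of both integrands (so that all jumps contribute to $dh$ via the total-variation measure, which is controlled by $V(h)$), or equivalently by first approximating $h$ in sup-norm by continuously differentiable functions $h_\eps$ with $V(h_\eps)\leq V(h)+\eps$ and passing to the limit; both routes are standard but are the only place care is required.
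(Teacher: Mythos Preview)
The paper does not actually prove Theorem~\ref{thm_koksma_inequality}; Koksma's inequality is stated there as a classical result (implicitly referred to Kuipers--Niederreiter). What the paper \emph{does} prove is the variant Theorem~\ref{thm_koksma_inequality_2}, and the method there is exactly the one you outline: after ordering $t_1<\cdots<t_n$, the paper looks at
\[
J=\sum_{m=0}^{n}\int_{t_m}^{t_{m+1}}\Bigl(s-\tfrac{m}{n}\Bigr)\,dh(s),
\]
bounds $|J|\le D_n^*(\ts)\,V(h)$ via Lemma~\ref{lem_exprssion_for_discrepcy}, and then integrates by parts. Since on $(t_m,t_{m+1})$ one has $F_n(s)-F(s)=\tfrac{m}{n}-s$, this $J$ is precisely your $-\int_0^1 (F_n-F)\,dh$, so the two arguments are the same computation written in different notation. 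Your treatment of the boundary terms and the common-discontinuity issue is appropriate and is the only place real care is needed; the paper sidesteps it by writing $J$ directly as a sum of Stieltjes integrals over the subintervals rather than passing through the empirical distribution function.
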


%
We will work in Section~\ref{sec_poof_of_central_limit_for_Wn} with $h(s)=\log\bigl(f(e^{2\pi i s})$ for $f$ real analytic.
If $s_0$ is a zero of $f$, then
$$h(s) \sim K \log\bigl|s-s_0\bigr| \text{ and } \frac{d}{ds}h(s) \sim \frac{K}{|s-s_0|}$$
for $s\to s_0$ and $K$ a constant. We thus cannot apply Koksma's inequality in this situation. We instead use

%
\begin{theorem}
\label{thm_koksma_inequality_2}
Let $\delta>0$ and $ 0= s_0< s_1 < \cdots < s_{d+1} = 1$ with $s_{k+1}-s_k>2\delta$ be given.
We set
\begin{align}
  I:= \bigcup_{k=0}^d I_k \text{ with } I_k:=[ s_k+\delta, s_{k+1}-\delta].
\end{align}
Further let $\ts=(t_m)_{m=1}^n$ be a sequence in $I$ and $h:I \to \C$ be proper Riemann integrable function of bounded variation $V(h)$. Then
\begin{align}
\left|
 \frac{1}{n} \sum_{m=1}^n h(t_m) - \int_{I} h(s)  \ ds
\right|
\leq & \ D_n^*(\ts)V(h) +
\delta \left(\sum_{k=0}^d |h(s_k+\delta)|+ |h(s_{k+1}-\delta)|\right)
\label{eq_koksma_inequality_2}
\end{align}
with $V(h)$ calculated in $I$ and $D_n^*$ calculated in $[0,1]$.
\end{theorem}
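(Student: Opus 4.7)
The plan is to prove the inequality by Stieltjes integration by parts on each $I_k$, exploiting the completely explicit structure of the empirical distribution function of $\ts$ on each gap of $[0,1]\setminus I$.

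Set $a_k:=s_k+\delta$, $b_k:=s_{k+1}-\delta$, $F_n(s):=\frac{1}{n}\#\{m:t_m\le s\}$, and $g(s):=F_n(s)-s$; by definition of $D_n^*$, $|g(s)|\le D_n^*(\ts)$ on $[0,1]$. Since no $t_m$ lies in any gap $[s_k-\delta,s_k+\delta]$, $F_n$ is constant on each gap, so $g$ decreases linearly by exactly $2\delta$ across each of them. In particular one has the exact identities
\begin{equation*}
g(a_0)=-\delta,\qquad g(b_d)=\delta,\qquad g(a_{k+1})=g(b_k)-2\delta\quad(k=0,\dots,d-1).
\end{equation*}
Writing
\begin{equation*}
\frac{1}{n}\sum_{m=1}^n h(t_m)-\int_I h(s)\,ds=\sum_{k=0}^d\int_{I_k}h\,d(F_n-s)
\end{equation*}
and performing Stieltjes integration by parts on each $I_k$,
\begin{equation*}
\int_{I_k}h\,d(F_n-s)=h(b_k)g(b_k)-h(a_k)g(a_k)-\int_{I_k}g(s)\,dh(s),
\end{equation*}
I would bound the inner part $\sum_k\int_{I_k}g\,dh$ by $D_n^*(\ts)\sum_k V_{I_k}(h)$.

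For the boundary part $\sum_k[h(b_k)g(b_k)-h(a_k)g(a_k)]$, I would extract $\delta h(a_0)+\delta h(b_d)$ from the outer endpoints via the identities $g(a_0)=-\delta$, $g(b_d)=\delta$, and for each interior pair $(b_k,a_{k+1})$ average the two natural rearrangements coming from $g(a_{k+1})=g(b_k)-2\delta$, obtaining
\begin{equation*}
h(b_k)g(b_k)-h(a_{k+1})g(a_{k+1})=\delta\bigl(h(b_k)+h(a_{k+1})\bigr)+\tfrac{g(b_k)+g(a_{k+1})}{2}\bigl(h(b_k)-h(a_{k+1})\bigr).
\end{equation*}
The midpoint $\tfrac12(g(b_k)+g(a_{k+1}))$ has modulus at most $D_n^*(\ts)$, so after taking absolute values the cross-gap jumps $|h(b_k)-h(a_{k+1})|$ get multiplied by $D_n^*$. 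Since $V(h)$, interpreted as the total variation of $h$ on the ordered subset $I\subset[0,1]$, equals the sum of the within-interval variations $V_{I_k}(h)$ and the cross-gap jumps $|h(b_k)-h(a_{k+1})|$, the $D_n^*$-contributions from the boundary and the inner parts add up to $D_n^*(\ts)\,V(h)$. The $\delta$-contributions assemble exactly into $\delta\sum_{k=0}^d(|h(s_k+\delta)|+|h(s_{k+1}-\delta)|)$, matching the claimed bound.

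The main subtlety I expect is the need for the \emph{symmetric} averaging of the two rearrangements of the interior boundary terms. A single rearrangement produces an asymmetric expression in which either $|h(a_{k+1})|$ or $|h(b_k)|$ appears with coefficient $2\delta$ and the other with coefficient $0$, incompatible with the symmetric bound in the statement. Averaging cancels this asymmetry, and the resulting midpoint coefficient $\tfrac12(g(b_k)+g(a_{k+1}))=g(b_k)-\delta$ is still bounded by $D_n^*(\ts)$, allowing the cross-gap jumps to be absorbed cleanly into $D_n^*(\ts)V(h)$.
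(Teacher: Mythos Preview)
Your argument is correct and is essentially the paper's own proof: both express the error as a Riemann--Stieltjes integral against the discrepancy function $F_n(s)-s$ (equivalently, the paper's $s-m/n$ on $[t_m,t_{m+1}]$), integrate by parts, and bound the inner integral by $D_n^*V(h)$. The paper only writes out the single-interval case $I=[\delta,1-\delta]$ and dismisses the general case as ``complete similar'', whereas you actually carry out the multi-interval bookkeeping; your symmetric averaging of the two rearrangements at each interior gap is a clean way to get the factor $\delta$ on both $|h(b_k)|$ and $|h(a_{k+1})|$ simultaneously, and it relies on reading ``$V(h)$ calculated in $I$'' as the total variation on the ordered set $I\subset[0,1]$ (hence including the cross-gap jumps $|h(b_k)-h(a_{k+1})|$), which is the natural interpretation here.
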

\begin{proof}
We consider here only $I=[\delta,1-\delta]$. The general case is complete similar.
W.l.o.g we can assume that $t_1<t_2<\cdots<t_n$. We put $t_0:=\delta, t_{n+1}:= 1-\delta$ and look at
\begin{align}
 J:= \sum_{m=0}^n \int_{t_m}^{t_{m+1}} \left(s-\frac{m}{n}\right) \ dh(s) .
\end{align}
It follows immediately from Lemma~\ref{lem_exprssion_for_discrepcy} that $|J| \leq D_n^* V(h)$. This observation together with
a partial integration proves the theorem.
\end{proof}
We consider in Section~\ref{sec_poof_of_central_limit_for_Wn}
\begin{align}
  E_n:= \frac{1}{n} \sum_{m=1}^n \log\bigl(f(e^{2\pi i mt}) \text{ as } n\to \infty
\end{align}
for a fixed $t$ and $f$ real analytic. We are thus interested in the sequence $\left(\fracpart{m t}\right)_{m=1}^\infty$ with
\begin{align}
\fracpart{t}:=t-\intpart{t} \text{ and } \intpart{t}:=\max\set{n\in\Z, n\leq t}.
\label{def_ganz_anteil}
\end{align}
It is clear that we have to distinguish between $t$ rational and $t$ irrational. The case $t$ rational is easy to handle. The next lemma shows that we can apply Theorem~\ref{thm_koksma_inequality} and Theorem~\ref{thm_koksma_inequality_2} respectively for $t$ irrational.
\begin{lemma}
\label{lem_all_irrational_uniform_distributed}
Let $t\in\R$ be given.
The sequence $\left(\fracpart{m t}\right)_{m=1}^\infty$ is uniformly distributed in $[0,1]$ if and only if $t$ is irrational.
\end{lemma}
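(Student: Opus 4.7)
The strategy is a standard split by the rationality of $t$: the rational direction is an elementary finite-set argument, while the irrational direction rests on Weyl's equidistribution criterion, which in turn reduces to a geometric-series estimate.

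First assume $t$ is rational, say $t = p/q$ with $q \in \N$. Then every $\fracpart{mt}$ lies in the finite set $\{0, 1/q, 2/q, \dots, (q-1)/q\}$. Choose an interval $[\alpha,\beta]\subset [0,1]$ of length $\beta - \alpha < 1/q$ disjoint from this set (such an interval exists since the set is finite). Then $A_n([\alpha,\beta],\ts) = 0$ for every $n$, so
\begin{align*}
\left|\frac{A_n([\alpha,\beta])}{n} - (\beta-\alpha)\right| = \beta - \alpha > 0,
\end{align*}
and \eqref{eq_unifrom_conv_intervall} fails. Hence $(\fracpart{mt})$ is not uniformly distributed.

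For the converse, assume $t$ is irrational. I would invoke Weyl's criterion: a sequence $(x_m)\subset[0,1]$ is uniformly distributed in $[0,1]$ if and only if
\begin{align*}
\frac{1}{n}\sum_{m=1}^n e^{2\pi i h x_m} \longrightarrow 0 \qquad\text{for every } h\in\Z\setminus\{0\}.
\end{align*}
(This is a standard result in Kuipers--Niederreiter; its proof combines linearity in $h$, density of trigonometric polynomials among continuous $1$-periodic functions via Stone--Weierstrass, and an $\epsilon$-sandwich $\phi_-\leq \one_{[\alpha,\beta]}\leq \phi_+$ by trig polynomials with $\int(\phi_+-\phi_-)\,ds<\epsilon$.) Applied to $x_m=\fracpart{mt}$ we have $e^{2\pi i h x_m}=e^{2\pi i h m t}$, and since $t$ is irrational we know $ht\notin\Z$ for every $h\neq 0$, so $e^{2\pi i h t}\neq 1$. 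The geometric-series identity then gives
\begin{align*}
\left|\sum_{m=1}^n e^{2\pi i h m t}\right| = \left|\frac{e^{2\pi i h t}\bigl(e^{2\pi i h n t}-1\bigr)}{e^{2\pi i h t}-1}\right| \leq \frac{2}{\bigl|e^{2\pi i h t}-1\bigr|},
\end{align*}
a constant independent of $n$. Dividing by $n$ yields the required convergence to $0$.

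The only nontrivial ingredient is Weyl's criterion itself, which is where all the analytic work is concentrated; once one has it, both directions are immediate, the rational case by a counting argument and the irrational case by the one-line bound above. Since the paper is already citing \cite{kuipers-niederreiter-74} for background on uniformly distributed sequences, it is natural to quote Weyl's criterion from there rather than reprove it.
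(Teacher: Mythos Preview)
Your argument is correct and is exactly the classical proof via Weyl's criterion found in Kuipers--Niederreiter. Note, however, that the paper does not actually prove this lemma: it is stated without proof in Section~\ref{sec_uniform_dist_seq}, where the authors announce at the outset that they ``follow the book~\cite{kuipers-niederreiter-74} and omit most of the proofs since they are not difficult.'' So there is nothing to compare your approach against; you have simply supplied the standard argument that the paper defers to the reference.
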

We can use Theorem~\ref{thm_koksma_inequality_2} only if we can estimate of the discrepancy of $\left(\fracpart{m t}\right)_{m=1}^\infty$ and find a $\delta = \delta(n)$ with $|\fracpart{m t}-s_k|>\delta$ for $1\leq m\leq n, 1 \leq k \leq d$ and $s_1,\dots s_d$ the zeros of $f(e^{2\pi i s})$ such that error terms in \eqref{eq_koksma_inequality_2} vanishes for $n\to \infty$. This is not so easy to do for arbitrary $t$ and $f$, but it can be done if $t$ is of finite type and all zeros of $f$ are roots of unity.

\begin{definition}
\label{def_finite_type}
Let $x=e^{2\pi i t}$ be given. We call $x$ and $t$ respectively of finite type if there exist constants $K,\gamma>0$ such that
\begin{align}
   \norm{\fracpart{mt}} > \frac{K}{m^\gamma} \ \text{ for all } m\in \Z\setminus\set{0}.
\label{eq_def_finite_type}
\end{align}
with $\norm{s}:= \min\set{s, 1-s}$.
We set
\begin{align}
 \eta = \inf \set{\gamma\in\R_{+}: \gamma \text{ fulfills \eqref{eq_def_finite_type}}}.
\label{eq_def_type}
\end{align}
The constant $\eta$ is called the type of $x$ and $t$ respectively.
%
\end{definition}
A simple computation now shows
\begin{lemma}
\label{lem_finite_type_approximation}
Let $t$ be of type $\eta$ and $q\in\N$ be fixed. For $\gamma>\eta$ there exist a constant $K$ such that
\begin{align}
   \max_{0 \leq p \leq q} \left|\fracpart{mt}-\frac{p}{q}\right| > \frac{K}{m^\gamma} \ \text{ for all } m\in \Z\setminus\set{0}.
\end{align}

\end{lemma}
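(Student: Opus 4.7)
The plan is to rewrite $\left|\fracpart{mt}-p/q\right|$ in terms of distance to $\Z$ and then apply the finite type condition of $t$ at the integer $qm$. (As displayed with $\max$ the inequality is trivial with $K=1/2$, because $p=0$ and $p=q$ give $\fracpart{mt}$ and $1-\fracpart{mt}$, one of which is at least $1/2$; I therefore prove the non-trivial version with $\min$, which is what is needed later in the paper to keep $\fracpart{mt}$ away from the zeros $p/q$ of $f(e^{2\pi i s})$.)

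The key identity is $\left|\fracpart{mt}-p/q\right|=\tfrac{1}{q}\left|q\fracpart{mt}-p\right|$. Since $q\fracpart{mt}\in[0,q)$, the nearest integer to $q\fracpart{mt}$ already lies in $\{0,1,\dots,q\}$, so
$$\min_{0\le p\le q}\left|\fracpart{mt}-\tfrac{p}{q}\right|=\frac{1}{q}\,\mathrm{dist}\bigl(q\fracpart{mt},\Z\bigr).$$
Subtracting the integer $q\intpart{mt}$ does not change distance to $\Z$, hence
$$\mathrm{dist}\bigl(q\fracpart{mt},\Z\bigr)=\mathrm{dist}(qmt,\Z)=\norm{\fracpart{qmt}}$$
in the notation of Definition~\ref{def_finite_type}.

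Next I apply the finite type hypothesis to $t$ with the integer $qm\in\Z\setminus\{0\}$ and the exponent $\gamma>\eta$: there is a constant $\tilde K>0$ with $\norm{\fracpart{(qm)t}}>\tilde K/(qm)^\gamma$. Combining with the previous reduction gives
$$\min_{0\le p\le q}\left|\fracpart{mt}-\tfrac{p}{q}\right|>\frac{\tilde K}{q^{\gamma+1}\,m^\gamma},$$
so the lemma follows with $K:=\tilde K/q^{\gamma+1}$.

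I do not expect any real obstacle in this argument; it is essentially a one-line consequence of the definition of finite type once the reduction to distance-to-$\Z$ has been made. The only subtlety to note is that $t$ being of finite type forces $t\notin\Q$ (otherwise $\fracpart{mt}=0$ for some $m\neq 0$, violating \eqref{eq_def_finite_type}), so $qmt\notin\Z$ for $m\neq 0$ and the bounds above are strictly positive and meaningful.
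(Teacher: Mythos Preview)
Your proof is correct, and it is precisely the ``simple computation'' that the paper alludes to without writing out (the paper gives no explicit proof of this lemma). Your observation that the statement should read $\min$ rather than $\max$ is also well taken: as printed the inequality is trivially satisfied with $K=1/2$, while the later application in Case~1.2 of the proof of Theorem~\ref{thm_main_result} requires exactly the $\min$ version you prove.
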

If $t$ is of finite type, then the discrepancy of the sequence $\left(\fracpart{m t}\right)_{m\in\N}$ can be estimated with Erd\"os-Tur\'an-Koksma inequality. We get
\begin{theorem}
 \label{thm_finite_type_implies_small_discrepancy}
Let $t$ be of finite type $\eta$ and $\ts=\left(\fracpart{mt}\right)_{m=1}^\infty$.
We then have for each $\eps>0$
\begin{align}
 D_n(\ts) = O \left(n^{-\frac{1}{\eta}+\eps}\right)
  \label{eq_thm_finite_type_implies_small_discrepancy}
\end{align}
\end{theorem}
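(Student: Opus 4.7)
The plan is to invoke the Erd\H{o}s--Tur\'an--Koksma inequality, which bounds the discrepancy of any sequence $\ts = (t_m)_{m=1}^\infty$ in $[0,1]$ by Weyl sums: for every positive integer $H$,
\begin{align*}
 D_n(\ts) \leq C\left( \frac{1}{H} + \sum_{h=1}^H \frac{1}{h} \left| \frac{1}{n} \sum_{m=1}^n e^{2\pi i h t_m} \right| \right),
\end{align*}
with $C$ an absolute constant. For $t_m = \fracpart{mt}$ the inner sum is a geometric progression, whose modulus is bounded by $\frac{1}{2|\sin(\pi h t)|} \leq \frac{1}{4\,\norm{\fracpart{ht}}}$. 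This reduces the problem to estimating $\sum_{h=1}^H \frac{1}{h\,\norm{\fracpart{ht}}}$ in terms of the type of $t$.

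By Definition~\ref{def_finite_type}, for every $\gamma > \eta$ there is a constant $K$ with $\norm{\fracpart{ht}} \geq K h^{-\gamma}$. Substituting this pointwise bound yields only
\begin{align*}
 \sum_{h=1}^H \frac{1}{h\,\norm{\fracpart{ht}}} = O(H^\gamma),
\end{align*}
and the choice $H \sim n^{1/(\gamma+1)}$ in the Erd\H{o}s--Tur\'an--Koksma inequality then gives the weaker estimate $D_n(\ts) = O(n^{-1/(\eta+1)+\eps})$. To obtain the sharper exponent $1/\eta$ claimed in the theorem, one has to exploit that for most $h$ the quantity $\norm{\fracpart{ht}}$ is considerably larger than its worst-case lower bound $K h^{-\gamma}$. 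The standard way (see Kuipers--Niederreiter, Chapter~2) is to group the indices $h$ between consecutive continued fraction denominators $q_k,q_{k+1}$ of $t$ and apply the three-distance theorem on each block. This produces the averaged summability lemma
\begin{align*}
 \sum_{h=1}^H \frac{1}{h\,\norm{\fracpart{ht}}} = O(H^{\eta - 1 + \eps}).
\end{align*}

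With this lemma in hand, the Erd\H{o}s--Tur\'an--Koksma bound reads $D_n(\ts) \leq C(1/H + H^{\eta-1+\eps}/n)$, and balancing the two terms with $H = \intpart{n^{1/(\eta+\eps)}}$ yields $D_n(\ts) = O(n^{-1/\eta + \eps'})$ for every $\eps' > 0$, which is the assertion of the theorem. The Weyl-sum reduction and the final optimization are entirely routine; the substantive point, and the main obstacle, is the averaged summability lemma, which cannot be obtained from the pointwise Diophantine inequality alone and requires the continued fraction structure of $t$.
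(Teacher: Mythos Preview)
The paper does not prove this theorem at all: its proof consists solely of a reference to Kuipers--Niederreiter, preceded by the remark that the discrepancy ``can be estimated with the Erd\H{o}s--Tur\'an--Koksma inequality.'' Your sketch is precisely the standard argument from that reference and is consistent with the paper's hint, so there is nothing to compare. Your outline is correct in structure: the geometric-sum bound for the Weyl sums, the reduction to $\sum_{h\le H}\frac{1}{h\,\norm{\fracpart{ht}}}$, and the optimisation in $H$ are all routine, and you correctly identify that the pointwise lower bound $\norm{\fracpart{ht}}\ge K h^{-\gamma}$ only yields the weaker exponent $1/(\eta+1)$. The substantive step---your averaged summability estimate $\sum_{h\le H}\frac{1}{h\,\norm{\fracpart{ht}}}=O(H^{\eta-1+\eps})$---is indeed the heart of the matter and is what Kuipers--Niederreiter establish via the continued-fraction expansion (controlling the growth of the partial quotients from the type hypothesis); since you defer that lemma to the same source the paper cites, your proposal and the paper's ``proof'' ultimately rest on the same external input.
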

\begin{proof}
See \cite{kuipers-niederreiter-74}.
\end{proof}

\section{Auxiliary central limit theorem}
\label{sec_weak_limit_thm}
We prove in this section the following auxiliary central limit theorem
\begin{theorem}
\label{thm_normal_limit_dist}
Let $\theta >0$ be fixed.
Let $(c_m)_{m=1}^\infty$ be a sequence of complex numbers with $a_m=\Re(c_m)$, $b_m=\Im(c_m)$ and
\begin{enumerate}
\item \label{enum_thm_limit_dist_cond_1} $|b_m|\leq 2\pi$ and $|a_m| = O \bigl( \log(m) \bigr)$,
\item \label{enum_thm_limit_dist_cond_2} $\frac{1}{n} \sum_{m=1}^n |a_m| = E_a + O(n^{-\delta_a})$ for $n \to \infty$ and some $\delta_a>0$,
\item \label{enum_thm_limit_dist_cond_3} $\frac{1}{n} \sum_{m=1}^n |b_m| = E_b + O(n^{-\delta_b})$ for $n \to \infty$ and some $\delta_b>0$,
\item \label{enum_thm_limit_dist_cond_4} $\frac{1}{n} \sum_{m=1}^n a_m^2 \to V_a,\
					  \frac{1}{n} \sum_{m=1}^n b_m^2 \to V_b$, \ $\frac{1}{n} \sum_{m=1}^n a_m b_m \to E_{ab}$ for $n \to \infty$,
\item \label{enum_thm_limit_dist_cond_5} $\frac{1}{n} \sum_{m=1}^n |a_m|^3 = o\bigl(\log^{1/2}(n)\bigr)$,
\item \label{enum_thm_limit_dist_cond_6} It exists a $p > \frac{1}{\theta}$ with $\frac{1}{n} \sum_{n/2 < m < n} |a_m|^p = O(1)$.
\end{enumerate}
We define $A_n= A_n(\theta):=\sum_{m=1}^n a_m C_m^{(n)}$ and $B_n = B_n(\theta):=\sum_{m=1}^n b_m C_m^{(n)}$.

We then have
\begin{align}
\frac{A_n + i B_n-\Et{A_n + i B_n}}{\sqrt{\log(n)}}
	\xrightarrow{d}
 \mathcal{N}
\label{eq_thm_normal_limit_dist}
\end{align}
with $\mathcal{N}$ a complex normal distributed random variable with covariance matrix
\begin{align}
\Sigma = \theta\left(\begin{array}{cc}
            V_a & E_{ab} \\
            E_{ab} & V_b
          \end{array}\right) .
\end{align}
The real and the imaginary part of $\mathcal{N}$ are independent if and only if
$E_{ab}=0$.
\end{theorem}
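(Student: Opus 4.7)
The plan is to reduce the theorem to a CLT for a sum of independent Poisson random variables via the Feller coupling (Theorem~\ref{thm_feller_conv}) and then apply Lyapunov's CLT. By the Cram\'er-Wold device it suffices to prove the one-dimensional CLT for every real linear combination $S_n := \alpha A_n + \beta B_n = \sum_{m=1}^n d_m C_m^{(n)}$ with $d_m = \alpha a_m + \beta b_m$; the sequence $(d_m)$ inherits hypotheses \eqref{enum_thm_limit_dist_cond_1}--\eqref{enum_thm_limit_dist_cond_6} from $(a_m),(b_m)$, so I can work with real weights throughout and recover the limiting covariance at the end by reading off the variance of $S_n$ as a quadratic form in $(\alpha,\beta)$.

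First I would split $S_n$ at the index $n/2$. On the head range $m \leq n/2$, both Feller bounds in Theorem~\ref{thm_feller_conv} reduce to $\Et{|C_m^{(n)}-Y_m|} = O(1/n)$, and since hypotheses \eqref{enum_thm_limit_dist_cond_2}--\eqref{enum_thm_limit_dist_cond_3} give $\sum_{m\leq n}|d_m| = O(n)$, the $L^1$ error of replacing $C_m^{(n)}$ by $Y_m$ on this range is $O(1)$, hence $o(\sqrt{\log n})$. For the tail range $n/2 < m \leq n$ one has $C_m^{(n)}\in\{0,1\}$, and inserting the explicit formula \eqref{eq_expec_Cm} together with the standard asymptotics of the binomial coefficients gives $\Et{C_m^{(n)}} = O\bigl(m^{-1}(1-m/n)^{\theta-1}\bigr)$. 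H\"older's inequality (Lemma~\ref{lem_holder_inequality}) with the exponent $p>1/\theta$ from hypothesis \eqref{enum_thm_limit_dist_cond_6} and its conjugate $q$ (which then satisfies $q<1/(1-\theta)$, making $\sum_{n/2<m\leq n}(\Et{C_m^{(n)}})^q = O(n^{1-q})$) bounds $\E{\sum_{n/2<m\leq n}|d_m|C_m^{(n)}}$ by $O(1)$, and the parallel Poisson tail $\sum_{n/2<m\leq n}d_m Y_m$ is dispatched by Markov's inequality and \eqref{eq_part_sum_2}.

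The problem is thereby reduced to the classical CLT for the independent Poisson sum $T_n = \sum_{m\leq n/2}d_m Y_m$. Since $\Var{Y_m}=\theta/m$, the Abel-summation identity \eqref{eq_part_sum_1} applied to the Ces\`aro averages in hypothesis \eqref{enum_thm_limit_dist_cond_4} gives $\Var{T_n} = \theta(\alpha^2 V_a + 2\alpha\beta E_{ab} + \beta^2 V_b)\log n + O(1)$, matching the quadratic form of the claimed covariance matrix $\Sigma$. Lyapunov's condition, since the third central absolute moment of a $\mathrm{Poisson}(\theta/m)$ variable is bounded by a constant times $\theta/m$, reduces to $(\log n)^{-3/2}\sum|d_m|^3/m \to 0$, which follows from hypothesis \eqref{enum_thm_limit_dist_cond_5} via another use of Lemma~\ref{lem_analytic_prop_1}.

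The main obstacle is the tail $n/2 < m \leq n$ in the subcritical regime $0<\theta<1$: the per-term Feller bound degenerates as $m\to n$, so one cannot substitute $Y_m$ for $C_m^{(n)}$ there, and the singular factor $(1-m/n)^{\theta-1}$ in $\Et{C_m^{(n)}}$ forces a genuine $L^p$-estimate on the weights. Hypothesis \eqref{enum_thm_limit_dist_cond_6} is designed precisely so that the H\"older exponent $p > 1/\theta$ yields a conjugate $q$ small enough to keep $\sum_{n/2<m\leq n}(\Et{C_m^{(n)}})^q$ bounded, allowing the unaltered tail to be dismissed in $L^1$.
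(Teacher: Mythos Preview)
Your proposal is correct and follows essentially the same route as the paper: Feller coupling to replace $C_m^{(n)}$ by the independent Poissons $Y_m$, a split at $n/2$ so that the Feller bound is uniform on the head, and---for $0<\theta<1$---separate $L^1$ control of the two tails, the $C_m^{(n)}$ tail being handled by the explicit expectation \eqref{eq_expec_Cm} together with H\"older at the exponent $p>1/\theta$ from condition~\eqref{enum_thm_limit_dist_cond_6}, exactly as in the paper's Lemma~\ref{lem_help_asyp_behavior}. The only cosmetic difference is that the paper computes the bivariate characteristic function of $(\widetilde A_n,\widetilde B_n)$ directly and Taylor-expands to third order, whereas you invoke Cram\'er--Wold and Lyapunov; these are the same computation in different packaging, and the Lyapunov third-moment bound you write down is precisely the cubic error term the paper isolates in \eqref{eq_chi_asymp}.
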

%

Theorem~\ref{thm_normal_limit_dist} is similar to Lemma~3.1 in \cite{HKOS}, but there are two important differences:
\begin{itemize}
 \item We can calculate the covariance between the real and imaginary part and show when they are independent in the limit.
 \item We consider a more general measure on $S_n$.
\end{itemize}

%
\begin{proof}
We use in this proof the Feller-coupling (see Section~\ref{sec_cycle_counts}).
The random variables $C_m^{(n)}, C_m^{(n+1)}$  and $Y_m$ are therefore defined on the same space.
The idea of the proof is to replace $C_m^{(n)}$ by $Y_m$ and to do the computations with $Y_m$. For this reason we set
\begin{align}
\widetilde{A}_n:=\sum_{m=1}^n a_m Y_m \ \text{ and } \ \widetilde{B}_n:=\sum_{m=1}^n b_m Y_m.
\end{align}
We then can show
\begin{lemma}
\label{lem_help_asyp_behavior}
Let $a_m,b_m, A_n, B_n, \widetilde{A}_n$ and $\widetilde{B}_n$ be as above.
We then have
\begin{align}
\E{\left| \widetilde{A}_n + i \widetilde{B}_n  -A_n  - iB_n  \right|}
=
O(1) \text{ as } n\to \infty.
\label{eq_Cm_to_Ym=O(1)}
\end{align}
In particular, we see that
\begin{align}
  \frac{1}{\sqrt{\log(n)}}\bigl((A_n + i B_n) - (\widetilde{A}_n + i \widetilde{B}_n)\bigr)
\stackrel{d}{\longrightarrow}
0
\end{align}
and thus
$\frac{1}{\sqrt{\log(n)}}(A_n + i B_n)$
and
$\frac{1}{\sqrt{\log(n)}}(\widetilde{A}_n + i \widetilde{B}_n)$
have the same asymptotic behavior as $n\to \infty$.
\end{lemma}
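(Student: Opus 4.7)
The plan is to use the Feller coupling (Theorem~\ref{thm_feller_conv}) to rewrite the error as a single weighted sum
\[
\widetilde A_n + i\widetilde B_n - A_n - iB_n = \sum_{m=1}^n c_m\,(Y_m - C_m^{(n)}),\qquad c_m := a_m + ib_m,
\]
and then apply the triangle inequality to reduce the estimate to
\[
\sum_{m=1}^n(|a_m|+|b_m|)\,\Et{|C_m^{(n)}-Y_m|}.
\]
The two regimes of the Feller coupling bound in Theorem~\ref{thm_feller_conv} dictate a case split.

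For $\theta \geq 1$, the estimate $\Et{|C_m^{(n)}-Y_m|}\leq \theta(\theta+1)/(\theta+n)$ is uniform in $m$, and combining with conditions~(2), (3) --- which give $\sum_{m=1}^n|a_m|,\ \sum_{m=1}^n|b_m| = O(n)$ --- immediately yields an $O(1)$ bound.

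For $0<\theta<1$ the bound $\theta(\theta+1)/(\theta+n-m)$ degenerates at $m=n$, and the argument is more delicate. The plan is to split the sum at $m=n/2$. On $\{m\leq n/2\}$ the denominator is at least $n/2$ and the reasoning of the previous case applies. The critical region is $\{n/2 < m\leq n\}$, where the plan is to invoke H\"older's inequality (Lemma~\ref{lem_holder_inequality}) with the exponent $p>1/\theta$ furnished by assumption~(6) and its conjugate $q = p/(p-1)$. The key observation is that $p>1/\theta$ is equivalent to $q<1/(1-\theta)$, so in particular $q>1$ and $\sum_{k\geq 0}(\theta+k)^{-q}$ converges; condition~(6) supplies the $\ell^p$-control on $(|a_m|)$ in the upper half, and the $b_m$-contribution is handled via $|b_m|\leq 2\pi$.

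The hard point is precisely the $0<\theta<1$ case: the singular Feller estimate at $m=n$ must be balanced delicately against the weaker $\ell^p$-control on $|a_m|$ near the boundary. Assumption~(6)'s threshold $p>1/\theta$ is tight, being exactly the condition that makes the H\"older conjugate $q$ strictly greater than $1$, so that the boundary singularity is (barely) integrable. The second conclusion of the lemma is then immediate from Markov's inequality: $\E{|X_n|}=O(1)$ implies $\Pb{|X_n|/\sqrt{\log n}>\varepsilon}\leq \E{|X_n|}/(\varepsilon\sqrt{\log n})\to 0$ for every $\varepsilon > 0$, so $X_n/\sqrt{\log n}\to 0$ in probability and hence in distribution.
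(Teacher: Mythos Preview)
Your approach for $\theta\ge 1$ and for the lower half $\{m\le n/2\}$ when $0<\theta<1$ matches the paper and is fine. The gap is in your treatment of the upper half $\{n/2<m\le n\}$ for $0<\theta<1$. The Feller bound $\Et{|C_m^{(n)}-Y_m|}\le \theta(\theta+1)/(\theta+n-m)$ is simply too crude there to yield $O(1)$: already for \emph{bounded} coefficients (take $a_m\equiv 1$, $b_m\equiv 0$) one gets
\[
\sum_{n/2<m\le n}\frac{\theta(\theta+1)}{\theta+n-m}
\;\asymp\;\theta(\theta+1)\sum_{k=0}^{\lfloor n/2\rfloor}\frac{1}{\theta+k}
\;\asymp\;\log n,
\]
so no H\"older argument based on this bound can reach $O(1)$. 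Your H\"older computation confirms this: with $\sum_{n/2<m\le n}|a_m|^p=O(n)$ from condition~(6) the first factor is $O(n^{1/p})$, and the second factor is $O(1)$; the product is $O(n^{1/p})$, not $O(1)$. (Also, $q>1$ does not follow from $q<1/(1-\theta)$; it holds because $p<\infty$.)

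The paper does \emph{not} use the Feller bound on the upper half. Instead it decouples $Y_m$ and $C_m^{(n)}$ there: the $Y_m$--piece is controlled via $\E{Y_m}=\theta/m$ together with Lemma~\ref{lem_analytic_prop_1} and conditions~(2),~(3). For the $C_m^{(n)}$--piece the paper exploits the structural fact that a permutation of $[n]$ has at most one cycle of length $>n/2$, so $\E{|\sum_{n/2<m\le n}c_m C_m|}=\sum_{n/2<m\le n}|c_m|\,\Pt{C_m=1}$. The crucial gain is that $\Pt{C_m=1}=\Et{C_m}$ carries an explicit factor $\theta/m\asymp 1/n$ (from~\eqref{eq_expec_Cm}); after estimating the remaining binomial ratio by $K(1-m/n)^{-(1-\theta)}$, H\"older with $p>1/\theta$ now balances correctly because the extra $1/n$ is split as $n^{-1/p}\cdot n^{-1/q}$ between the two factors. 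That extra $1/n$ is exactly what your direct use of the Feller bound is missing.
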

We first finish the proof of Theorem~\ref{thm_normal_limit_dist} and then prove Lemma~\ref{lem_help_asyp_behavior}.

We do this by computing the characteristic function of
$\frac{1}{\sqrt{\log(n)}}(\widetilde{A}_n + i \widetilde{B}_n)$.
We set
\begin{align}
\chi^{(n)}
=
 \chi^{(n)}(t_a,t_b)
  =&
  \E{\exp\left(it_a  \frac{\widetilde{A}_n-\E{\widetilde{A}_n}}{\sqrt{\log(n)}}
   +it_b  \frac{\widetilde{B}_n-\E{\widetilde{B}_n}}{\sqrt{\log(n)}}\right)}.
   \label{eq_def_chi_n}
\end{align}
We can compute $\chi^{(n)}(t_a,t_b)$ explicitly since
$\E{e^{itY}} = \exp(\la (e^{it}-1))$
for $Y$ a Poisson distributed random variable with $\E{Y}=\la$.
We get
\begin{align}
\chi^{(n)}
&=
\E{\exp\left(  \frac{it_a\widetilde{A}_n}{\sqrt{\log(n)}}
   + \frac{it_b \widetilde{B}_n}{\sqrt{\log(n)}}\right)}
\exp\left(- \frac{it_a \E{\widetilde{A}_n}}{\sqrt{\log(n)}}
   -  \frac{it_b \E{\widetilde{B}_n}}{\sqrt{\log(n)}}\right)\nonumber\\
&=
\E{\exp\left( \sum_{m=1}^n \left(\frac{it_a a_m +it_n b_m}{\sqrt{\log(n)}}\right)Y_m
 \right)}
 \exp\left(- \frac{it_a \E{\widetilde{A}_n}}{\sqrt{\log(n)}}
   - \frac{ it_b \E{\widetilde{B}_n}}{\sqrt{\log(n)}}\right)\nonumber\\
&=
\exp{\left( \theta\sum_{m=1}^n\frac{e^{\frac{i(t_a a_m + it_b b_m )}{\sqrt{\log(n)}}}-1}{m}\right)}
\exp\left(-\frac{\theta}{\sqrt{\log(n)}} \left(\sum_{m=1}^n \frac{it_a a_m+it_b b_m}{m} \right) \right).
\label{eq_proof_limit_dist_char_fkt}
\end{align}
We now use that $\left| (e^{it} - 1)-\left(it-\frac{t^2}{2}\right) \right| \leq |t^3|$ and get
\begin{align}
e^{\frac{it_a a_m}{\sqrt{\log(n)}}}  e^{\frac{it_b b_m}{\sqrt{\log(n)}}}-1
=&
\frac{1}{\sqrt{\log(n)}}(it_a a_m+it_b b_m)\nonumber\\
&-\frac{1}{\log(n)} \left(\frac{1}{2} a_m^2 t_a^2
+ \frac{1}{2} b_m^2 t_b^2 + a_m b_m t_a t_b\right)\nonumber\\
&+
\frac{1}{\log^{3/2}(n)}O\left(|a_m|+|a_m|^2+|a_m|^3\right)
\label{eq_chi_asymp}
\end{align}
for $|t_a|,|t_b|\leq K$ with $K$ an arbitrary fixed number. We get
\begin{align}
\chi^{(n)}(t_a,t_b)
=
\exp&
\left(
-\frac{\theta}{2\log(n)}\sum_{m=1}^n \frac{a_m^2 t_a^2 + b_m^2 t_b^2}{m}
-
t_a t_b\frac{\theta}{\log(n)}\sum_{m=1}^n \frac{a_mb_m }{m}\right.
\nonumber\\
&+
\left.
\frac{\theta}{\log^{1/2}(n)} O\left(\frac{1}{\log(n)}\sum_{m=1}^n \frac{|a_m|+|a_m|^2+|a_m|^3}{m}\right)
\right).
\end{align}
We apply Lemma~\ref{lem_analytic_prop_1} to each summand.
The first summand converge by condition~\eqref{enum_thm_limit_dist_cond_4} to $-\theta(\frac{V_a}{2} t_a^2 + \frac{V_b}{2} t_b^2)$.
The second summand converge by condition~\eqref{enum_thm_limit_dist_cond_4} to $\theta t_a t_bE_{ab}$.
The third summand converge by
the conditions~\eqref{enum_thm_limit_dist_cond_2}, \eqref{enum_thm_limit_dist_cond_4} and \eqref{enum_thm_limit_dist_cond_5} to $0$.
Therefore
\begin{align}
 \chi_n(t_a,t_b)
\to
\exp\left(-\theta\frac{V_a}{2} t_a^2 -\theta \frac{V_b}{2} t_b^2- \theta E_{ab} t_a t_b\right)
\label{eq_fourier_limit_function}
\end{align}
pointwise for all $|t_a|,|t_b| \leq K$. Since $K$ was arbitrary, $\chi_n(t_a,t_b)$ converge everywhere.
This proves the theorem.
\end{proof}
We now finish the proof by proving Lemma~\ref{lem_help_asyp_behavior}.
\begin{proof}[Proof of Lemma~\ref{lem_help_asyp_behavior}]
It is enough to prove \eqref{eq_Cm_to_Ym=O(1)} since the other statements follow immediately with Markov's inequality and Slutsky's theorem.

We have to distinguish the case $\theta \geq 1$ and $0< \theta <1$. We begin with $\theta\geq 1$
and use \eqref{thm_feller_conv} and the conditions~\eqref{enum_thm_limit_dist_cond_2} and \eqref{enum_thm_limit_dist_cond_3}  to get
\begin{align}
 & \E{\left| \widetilde{A}_n + i \widetilde{B}_n  -A_n  - iB_n  \right|}
=	
\E{\left|\sum_{m=1}^n (a_n+ib_m)(Y_m-C_m^{(n)})  \right|} \nonumber\\
&\leq
 \left( \frac{\theta(\theta+1)}{\theta+n}\sum_{m=1}^n \bigl(|a_m|+|b_m|\bigr) \right)
 =
O(1).
\label{eq_thm_stong_replace_of_Cm}
\end{align}
This proves the lemma for $\theta \geq 1$.\\

The case $0<\theta <1$ is a little bit more difficult since \eqref{thm_feller_conv} is now weaker.
We solve this problem by splitting the sum.  We do this as follows
\begin{align}
\E{\left| \widetilde{A}_n + i \widetilde{B}_n  -A_n  - iB_n  \right|}
\leq \
&\E{\sum_{m \leq n/2}  \left|(a_n+ib_m)(Y_m-C_m^{(n)})  \right|}
\label{eq_change_var_theta<1}\\
+
&\E{\left| \sum_{n/2 < m \leq n} (a_m+ib_m)  Y_m\right|}\nonumber\\
+
&\E{\left| \sum_{n/2 < m \leq n} (a_m+ib_m)  C_m\right|}.\nonumber
\end{align}
We now show that each summand in \eqref{eq_change_var_theta<1} is $O(1)$.

Equation \eqref{thm_feller_conv} gives us
$\E{\left|C_m^{(n)}-Y_m\right|}\leq \frac{\theta(\theta+1)}{\theta+n-m} \leq \frac{2\theta(\theta+1)}{n}$
for $1\leq m \leq  n/2$.
We thus can use the same computation as in \eqref{eq_thm_stong_replace_of_Cm} to see
that the first summand in \eqref{eq_change_var_theta<1} is equal to
$O(1)$.

We next look at the second summand in \eqref{eq_change_var_theta<1}. We have
\begin{align}
\E{\left| \sum_{n/2 < m \leq n} (a_m+ib_m)  Y_m\right|}
&\leq
\sum_{n/2 < m \leq n}   \theta\left( \frac{|a_m|}{m} +\frac{|b_m|}{m} \right) .
\end{align}
We now use Lemma~\ref{lem_analytic_prop_1} and condition~\eqref{enum_thm_limit_dist_cond_2}
and \eqref{enum_thm_limit_dist_cond_3} to see that
\begin{align}
  \sum_{n/2 < m \leq n} \frac{|a_m|+|b_m|}{m}
  =
  (E_a + E_b) \log(2) + O(n^{-\min\set{\delta_a,\delta_b}}).
  \label{eq_asymptotic_sum_am_m}
\end{align}

This shows that the second summand in \eqref{eq_change_var_theta<1} is also $O(1)$.

We finally look at the third summand.

It is obvious that a permutation $\sigma\in S_n$ can have at most one cycle with length greater than $n/2$.
This fact and condition~\eqref{enum_thm_limit_dist_cond_1} together gives us
\begin{align}
\E{\left| \sum_{n/2 < m \leq n} (a_m+ib_m)  C_m\right|}
&=
\sum_{n/2 < m \leq n}
|a_m+ib_m| \ \Pb{C_m=1} \nonumber\\
&\leq
\sum_{n/2 < m \leq n}
(|a_m|+|b_m|) \ \Pb{C_m=1} \nonumber\\
&\leq
\sum_{n/2 < m \leq n} |a_m|\ \Pb{C_m=1} +2\pi \sum_{n/2 < m \leq n} \Pb{C_m=1}\nonumber\\
&=
\sum_{n/2 < m \leq n} |a_m|\ \Pb{C_m=1} +2\pi \Pb{\sum_{n/2 < m \leq n} C_m>0} \nonumber\\
&\leq
2\pi +\sum_{n/2 < m \leq n} |a_m|\ \Pb{C_m=1}.
\label{eq_upper_bound_sum_am_Cm}
\end{align}
We have used on the third line that $\set{C_{m_1} =1}\cap \set{C_{m_2} =1} =\emptyset$ for $m_1,m_2 >n/2$ and $m_1 \neq m_2$.\\
If the sequence $(a_m)_{m=1}^\infty$ is bounded by a constant $K$,  we can argue as in \eqref{eq_upper_bound_sum_am_Cm} to see
\begin{align}
 \E{\left| \sum_{n/2 < m \leq n} (a_m+ib_m)  C_m\right|}
\leq
2\pi+K = O(1).
\end{align}
%
%
%
If the sequence $(a_m)_{m=1}^\infty$ is unbounded, we have to be more careful.
We first look at $ \Pb{C_m=1}$ for $m > n/2$. It follows immediately from \eqref{eq_expec_Cm} that
\begin{align}
 \Pb{C^{(n)}_m=1}
=
\E{C^{(n)}_m}
=
\frac{\theta}{m} \frac{\binom{n-m-\gamma}{n-m}}{ \binom{n-\gamma}{n}}
\text{ with }\gamma = 1-\theta.
\end{align}
We now need an upper bound for $\frac{\binom{n-m-\gamma}{n-m}}{ \binom{n-\gamma}{n}}$.
A simple computation shows
\begin{align*}
 \frac{\binom{n-m-\gamma}{n-m}}{ \binom{n-\gamma}{n}}
=
\frac{n(n-1)\cdots (n-m+1)}{(n-\gamma)(n-\gamma-1)\cdots (n-m-\gamma+1)}.
\end{align*}
We thus have

\begin{align}
 \log \left( \frac{\binom{n-m-\gamma}{n-m}}{ \binom{n-\gamma}{n}}  \right)
&=
\sum_{k=n-m+1}^n \log(k) -\sum_{k=n-m+1}^n \log(k-\gamma)
=
\sum_{k=n-m+1}^n -\log\left(1-\frac{\gamma}{k}\right)\nonumber\\
&=
\sum_{k=n-m+1}^n \left(\frac{\gamma}{k} +O\left(\frac{\gamma^2}{k^2}\right) \right)
=
O(1) + \gamma \sum_{k=n-m+1}^n \frac{1}{k}.
\end{align}
We now use $\sum_{m=1}^n \frac{1}{m} = \log(n)+ K_1 + O\left(\frac{1}{n}\right)$ (see \cite[Theorem 3.2]{Apostol:84})
and distinguish the cases $m=n$ and $m<n$.
If $m=n$ then we get immediately
\begin{align}
 \log \left( \frac{\binom{n-m-\gamma}{n-m}}{ \binom{n-\gamma}{n}}  \right)
 \leq \gamma \log(n) + K_2.
\end{align}
If $m<n$ then
\begin{align}
 \log \left( \frac{\binom{n-m-\gamma}{n-m}}{ \binom{n-\gamma}{n}}  \right)
&=
O(1)+ \gamma \log(n)-\gamma \log(n-m) +O\left(\frac{1}{n} +\frac{1}{n-m}\right)\nonumber\\
&= -\gamma \log \left(1-\frac{m}{n} \right) +O(1)
\end{align}
since $n-m>0$.
We put everything together and get

\begin{align}
  \frac{ \binom{n-m-\gamma}{n-m}}{ \binom{n-\gamma}{n} }
\leq
\left\{
       \begin{array}{ll}
       K_3 (1-\frac{m}{n})^{-\gamma}, & \hbox{for }m< n,  \\
       K_4 n^\gamma , & \hbox{for } m=n.
\end{array}
\right.
\label{eq_upper_bound_binom}
\end{align}
We use condition~\eqref{enum_thm_limit_dist_cond_1}
and the H\"older inequality (see Lemma~\ref{lem_holder_inequality}) for some $p,q>1$, specified in a moment.
\begin{align}
&\sum_{n/2 < m \leq n} |a_m|\ \Pb{C_m=1}
=
{|a_n|} \Pb{C_n=1}
+
\sum_{n/2 < m < n} |a_m|\ \Pb{C_m=1} \nonumber\\
&\leq
\frac{O \bigl( \log(n) \bigr)}{n}n^\gamma
+
\frac{2}{n} \sum_{n/2 < m < n} |a_m|\ \frac{ \binom{n-m-\gamma}{n-m}}{ \binom{n-\gamma}{n} } \nonumber\\
&\leq
\frac{2}{n} \left( \sum_{n/2 < m < n} |a_m|^p \right)^{1/p} \left(  \sum_{n/2 < m < n} \left|  \frac{ \binom{n-m-\gamma}{n-m}}{ \binom{n-\gamma}{n} }   \right|^q \right)^{1/q} +O(1) \nonumber\\
&\leq
2K_3
\left(\frac{1}{n} \sum_{n/2 < m < n} |a_m|^p \right)^{1/p} \left( \frac{1}{n} \sum_{n/2 < m < n} \left(1-\frac{m}{n}\right)^{-\gamma q}\right)^{1/q} +O(1)
\end{align}
The second factor is a Riemann sum for $\int_{1/2}^1 (1-t)^{-\gamma q} \  dt$.
If we choose a $q>1$ with $\gamma q<1$ then the integral exists
and one can use Theorem~\ref{thm_koksma_inequality_2} to see that the second factor converge to this integral.
We now check if we can choose $q$ in a such a way that the product is $O \bigl(1)$. We have
\begin{align*}
  \gamma q<1
\ \Longleftrightarrow \
(1-\theta) < \frac{1}{q}
\ \Longleftrightarrow \
(1-\theta) < 1- \frac{1}{p}
\ \Longleftrightarrow \
\theta > \frac{1}{p}
\ \Longleftrightarrow \
p> \frac{1}{\theta}.
\end{align*}
Condition~\eqref{enum_thm_limit_dist_cond_6} now ensures the existence of a $p> \frac{1}{\theta}$ such that
$\frac{1}{n} \sum_{n/2 < m < n} |a_m|^p$ is $O(1)$.
We get with this $p$ (and $q$) that the product is bounded.
This shows that the third summand in \eqref{eq_change_var_theta<1} is $O (1)$. This prove the lemma and completes the proof of Theorem~\ref{thm_normal_limit_dist}.
\end{proof}
Many assumptions we need in the proof of Theorem~\ref{thm_normal_limit_dist} are to handle the case $\theta<1$.
If one is only interested in the case uniform measure ($\theta =1$) or in $\theta \geq 1$, one can weaken the assumptions. We state this as a corollary.
\begin{corollary}
\label{cor_thm_normal_limit_dist}
Let $\theta \geq 1$ be fixed.
Let $(c_m)_{m=1}^\infty$ be a sequence of complex numbers with $a_m=\Re(c_m)$, $b_m=\Im(c_m)$ and
\begin{enumerate}
\item[(1')] \label{enum_thm_limit_dist_cond_1'} $|b_m|\leq 2\pi$,
\item[(4)]  \label{enum_thm_limit_dist_cond_4'} $\frac{1}{n} \sum_{m=1}^n a_m^2 \to V_a,\
					  \frac{1}{n} \sum_{m=1}^n b_m^2 \to V_b$, \ $\frac{1}{n} \sum_{m=1}^n a_m b_m \to E_{ab}$ for $n \to \infty$,
\item[(5)] \label{enum_thm_limit_dist_cond_5'} $\frac{1}{n} \sum_{m=1}^n |a_m|^3 = o\bigl(\log^{1/2}(n)\bigr)$.
\end{enumerate}
Let $A_n$ and $B_n$ be as in Theorem~\ref{thm_normal_limit_dist}. We then have
\begin{align}
\frac{A_n + i B_n-\Et{A_n + i B_n}}{\sqrt{\log(n)}}
	\xrightarrow{d}
 \mathcal{N}
\label{eq_thm_normal_limit_dist_2}
\end{align}
with $\mathcal{N}$ a complex normal distributed random variable with covariance matrix
\begin{align}
\Sigma = \theta\left(\begin{array}{cc}
            V_a & E_{ab} \\
            E_{ab} & V_b
          \end{array}\right) .
\end{align}
The real and the imaginary part of $\mathcal{N}$ are independent if and only if
$E_{ab}=0$.

\end{corollary}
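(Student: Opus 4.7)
The plan is to retrace the proof of Theorem~\ref{thm_normal_limit_dist} and verify, at each step that relied on one of the discarded hypotheses (2), (3), (6) or the bound $|a_m|=O(\log m)$ in (1), that the assumption $\theta\geq 1$ together with (1'), (4), (5) is already enough to reach the same conclusion.

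The first step is the Feller coupling replacement, Lemma~\ref{lem_help_asyp_behavior}. The whole splitting~\eqref{eq_change_var_theta<1} and the subsequent tail analysis were only needed in the regime $0<\theta<1$, where Theorem~\ref{thm_feller_conv} provides only the weak bound $\Et{|C_m^{(n)}-Y_m|}\leq \frac{\theta(\theta+1)}{\theta+n-m}$. For $\theta\geq 1$ one has the uniform estimate $\Et{|C_m^{(n)}-Y_m|}\leq\frac{\theta(\theta+1)}{\theta+n}=O(1/n)$, so it suffices to show $\frac{1}{n}\sum_{m=1}^n(|a_m|+|b_m|)=o(\sqrt{\log n})$. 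Condition (1') gives $\sum_{m=1}^n|b_m|\leq 2\pi n$, and Jensen's inequality applied to (5) gives $\frac{1}{n}\sum_{m=1}^n|a_m|\leq\bigl(\frac{1}{n}\sum_{m=1}^n|a_m|^3\bigr)^{1/3}=o(\log^{1/6}n)$. Hence $\E{|\widetilde{A}_n+i\widetilde{B}_n-A_n-iB_n|}=o(\log^{1/6}n)$, and after dividing by $\sqrt{\log n}$ this disappears by Slutsky's theorem.

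The second step is the characteristic function analysis. The identities leading to~\eqref{eq_chi_asymp} are purely algebraic, so the task reduces to proving $\frac{1}{\log n}\sum_{m=1}^n a_m^2/m\to V_a$ and the analogous limits for $b_m^2$ and $a_mb_m$, and then showing $\frac{1}{\log^{3/2}n}\sum_{m=1}^n\frac{|a_m|+|a_m|^2+|a_m|^3}{m}\to 0$. A standard Abel summation argument shows that any Ces\`aro convergence $\frac{1}{n}\sum_{m=1}^n c_m\to C$ forces $\sum_{m=1}^n c_m/m=C\log n+o(\log n)$, which handles all three quadratic sums using only (4), without any rate of convergence. For the error, Cauchy--Schwarz in the form $\sum_{m=1}^n|a_m|/m\leq\bigl(\sum_{m=1}^n a_m^2/m\bigr)^{1/2}\bigl(\sum_{m=1}^n 1/m\bigr)^{1/2}$ together with the $O(\log n)$ bound on $\sum a_m^2/m$ yields $O(\log n)$ for the linear and quadratic pieces, and the same Abel-summation argument applied to (5) yields $\sum_{m=1}^n|a_m|^3/m=o(\log^{3/2}n)$; dividing each piece by $\log^{3/2}n$ gives zero in the limit.

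The main obstacle is the Feller coupling replacement, which a priori required control on $\frac{1}{n}\sum|a_m|$ that we no longer assume. The resolution is the double observation that for $\theta\geq 1$ the coupling bound is uniformly $O(1/n)$, and that (5) --- which we need anyway for the characteristic function expansion --- already controls $\frac{1}{n}\sum|a_m|$ via Jensen. Once this is checked, no other step in the proof of Theorem~\ref{thm_normal_limit_dist} uses the discarded hypotheses, so the limit~\eqref{eq_fourier_limit_function} and the explicit form of $\Sigma$ carry over verbatim.
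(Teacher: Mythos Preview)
Your proposal is correct and is exactly the argument the paper intends: the corollary is stated without proof, immediately after the remark that the hypotheses dropped from Theorem~\ref{thm_normal_limit_dist} were only needed to handle $0<\theta<1$, and you have carefully checked that the $\theta\geq1$ branch of Lemma~\ref{lem_help_asyp_behavior} together with the characteristic-function computation indeed goes through using only (1'), (4), (5). One minor simplification in the coupling step: you bound $\frac{1}{n}\sum|a_m|$ via Jensen and (5) to get $o(\log^{1/6}n)$, but Cauchy--Schwarz with (4) already gives $\frac{1}{n}\sum_{m=1}^n|a_m|\leq\bigl(\frac{1}{n}\sum_{m=1}^n a_m^2\bigr)^{1/2}=O(1)$, which recovers the $O(1)$ bound of~\eqref{eq_thm_stong_replace_of_Cm} directly.
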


\section{Proof of the main Theorem~\ref{thm_main_result}}
\label{sec_poof_of_central_limit_for_Wn}
%
We now are ready to prove Theorem~\ref{thm_main_result}.
We recommend to read first Section~\ref{sec_uniform_dist_seq} before reading this proof.

\begin{proof}[Proof of Theorem~\ref{thm_main_result}]
We have by definition
\begin{align}
w^n(f)(x) = \sum_{m=1}^n C^{(n)}_m \log\bigl(f(x^m)\bigr).
\end{align}
We thus can apply Theorem~\ref{thm_normal_limit_dist} with $c_m:= \log\bigl(f(x^m)\bigr)$.
We now show  that the conditions~\eqref{enum_thm_limit_dist_cond_1} -- \eqref{enum_thm_limit_dist_cond_6} are fulfilled in all cases mentioned in Theorem~\ref{thm_main_result}.

We use the notation $x=e^{2\pi i t}, t_m:=\fracpart{mt}, \ts = \left(t_m\right)_{m=1}^\infty$ and define
\begin{align}
a(s)&:= \log\bigl| f(e^{2\pi i s}) \bigr|, &b(s):=& \arg\bigl( f(e^{2\pi i s})\bigr),\label{eq_def_a(s)_b(s)}\\
h(s)&:= \left|\log\bigl| f(e^{2\pi i s}) \bigr|\right|, & k(s):=& \left|\arg\bigl( f(e^{2\pi i s})\bigr) \right|.
\end{align}

%
%
%
%
%
%
%
\underline{Case 1.1}: $x$ not a root of unity and $f$ zero free.

Condition~\eqref{enum_thm_limit_dist_cond_1} is trivially fulfilled
since $a_m$ is bounded in this case and $b_m \leq 2\pi$ by definition of $w^n(f)$.
We next look at condition~\eqref{enum_thm_limit_dist_cond_2}. We have
\begin{align}
\frac{1}{n} \sum_{m=1} ^n |a_m|
=&
\frac{1}{n} \sum_{m=1} ^n |\Re(c_m)|
=
\frac{1}{n} \sum_{m=1} ^n  \Bigl|\log\bigl|f(x^m)\bigr| \Bigr|
=
\frac{1}{n} \sum_{m=1} ^n  h(t_m).
\end{align}

The function $h(s)$ is in this case a real analytic function on $[0,1]$
and we therefore can apply Theorem~\ref{thm_equidist_integral_convergence}
to see that the last expression converge to $\int_{0}^1 h(s) \ ds$.

We need in \eqref{enum_thm_limit_dist_cond_2} also the rate of convergence.
We thus use Theorem~\ref{thm_koksma_inequality} instead of Theorem~\ref{thm_equidist_integral_convergence}
and have therefore to estimate $D_n^*(\ts)$.
Since $x$ is assumed to be of finite type, see Definition~\ref{def_finite_type}, we can use Theorem~\ref{thm_finite_type_implies_small_discrepancy} and get $D_n(\ts) = O(n^{-\alpha})$ for some $\alpha >0$.
This gives the desired error rate.

It follows with the same argument that
\begin{align}
\frac{1}{n} \sum_{m=1} ^n |a_m|^p \to \int_{0}^1 h^p(s) \ ds \text{ for each } 1\leq p < \infty.
\end{align}
This shows that conditions~\eqref{enum_thm_limit_dist_cond_2}, \eqref{enum_thm_limit_dist_cond_5}, \eqref{enum_thm_limit_dist_cond_6}
and the first part of condition~\eqref{enum_thm_limit_dist_cond_4} are fulfilled.

We next look at $b_m =  \Im\left(\log\bigl(f(x^m)\bigr)\right) = \arg(f(x^m)) = b(t_m)$.
Since $f$ is real analytic, one can use function theory to show that there exists a finite set $D \subset [0,1]$ such that
$b(s)$ is real analytic in $[0,1] \setminus D$ and the limits
$ \lim_{s\uparrow s_0} \arg(f(e^{2\pi i s}))$ and  $\lim_{s\downarrow s_0} \arg(f(e^{2\pi i s}))$ exists for all $s_0 \in [0,1].$
We omit here the details since this is a standard argument. This shows that $b(s)$ is of bounded variation and that we can apply Theorem~\ref{thm_koksma_inequality} to $b(s)$. We get
\begin{align}
\frac{1}{n} \sum_{m=1}^n |b_m|
=
\int_{0}^1 \bigl|b(s)\bigr| \ ds
+ O(n^{-\delta_b})
\ \text{ and } \
\frac{1}{n} \sum_{m=1}^n b^2_m
\to
\int_{0}^1 \bigl(b(s)\bigr)^2 \ ds.
\end{align}
Similarly we get
\begin{align}
\frac{1}{n} \sum_{m=1}^n a_m b_m
\to
\int_{0}^1 a(s)b(s)\ ds
&=
\int_{0}^1 \Re\left(\log\bigl(f(e^{2 \pi i s})\bigr)\right) \Im\left(\log\bigl(f(e^{2 \pi i s})\bigr)\right) \ ds\nonumber\\
&=
\frac{1}{2}\Im\left( \int_{0}^1 \log^2\bigl(f(e^{2 \pi i s})\bigr) \ ds \right).
\end{align}
This gives the desired expression for the covariance mentioned in Theorem~\ref{thm_main_result}.\\
This shows that condition~\eqref{enum_thm_limit_dist_cond_3} and the rest of condition~\eqref{enum_thm_limit_dist_cond_4}
are fulfilled. This completes the proof of Theorem~\ref{thm_main_result} in this case. \\

\underline{Case 1.2}  $x$ not a root of unity and all zeros of $f$ are roots of unity.

The function $h(s)$ is in this case not anymore of bounded variation and we thus have to apply Theorem~\ref{thm_koksma_inequality_2}.

The discrepancy can be estimated as in case 1.1 as $D_n(\ts) = O(n^{-\alpha})$.

We define $s_1,\dots,s_d$ to be the zeros of $f(e^{2\pi is})$ and choose now a $\delta= \delta(n)$ such that the error terms in \eqref{eq_koksma_inequality_2} vanishes for $n\to \infty$.
By assumption, all zeros of $f$ are roots of unity and thus there exist a $q\in \N$ such that $s_k = \frac{p_k}{q}$ for some $p_k\in \N$. Since $x$ is assumed to be of finite type, we can apply Lemma~\ref{lem_finite_type_approximation} to see that
\begin{align}
  |t_m -s_k| \geq \frac{K}{n^\gamma} \text{ for } 1 \leq m \leq n, 1 \leq k \leq d
\end{align}
and some $\gamma,K>0$. We choose now $\delta = \frac{K}{n^\gamma}$.
Condition~\eqref{enum_thm_limit_dist_cond_1} follows immediately with this choice of $\delta$.

Let $s_0=0, s_{d+1}=1$ and let $V(h)$ be as in Theorem~\ref{thm_koksma_inequality_2}. We then get
\begin{align}
\left|\frac{1}{n} \sum_{m=1}^n h(t_m)- \int_{0}^{1} h(t)\ ds \right|
\leq&
\left|\int_0^\delta h(s) \ ds \right| +  \left|\int_{1-\delta}^1 h(s) \ ds \right|
+
\sum_{k=1}^d \left|\int_{s_k-\delta}^{s_k+\delta} h(s) \ ds \right|
\nonumber\\
&+
D_n^*(\ts) V(h)
+
\delta\sum_{k=0}^d
\bigl|h(s_k+\delta)\bigr| + \bigl|h(s_{k+1}-\delta)\bigr| \nonumber\\
\leq&
 O\left(n^{-\gamma} \log(n^\gamma)\right) + O\left(n^{-\alpha} \log(n^\gamma)\right)
\nonumber\\
=&
 O(n^{-\delta_a}) \text{ for some }\delta_a >0.
\label{eq_weak_convergence_caclulation}
\end{align}
Thus condition~\eqref{enum_thm_limit_dist_cond_2} is fulfilled.
A simple calculation shows that
\begin{align*}
h^p(s) \sim K_p \log^p(s), \qquad \frac{d}{ds} h^p(s) \sim pK_p  \frac{\log^{p-1}(s)}{s} \qquad \text{ for }  1\leq p <\infty.
\end{align*}
It is now easy to see that one can use the same argumentation as in \eqref{eq_weak_convergence_caclulation} also for $h^2, h^3$ and $h^p$.
The conditions~\eqref{enum_thm_limit_dist_cond_5}, \eqref{enum_thm_limit_dist_cond_6}
and the first part of condition~\eqref{enum_thm_limit_dist_cond_4}. The argumentation for $b_m$ is as above.

\underline{Case 2}: $x$ a root of unity of order $p$ and $f(x^m) \neq  0$ for all $1\leq m\leq p$.\\
We have
\begin{align}
\frac{1}{n} \sum_{m=1} ^n |a_m|
=&
\frac{1}{n} \sum_{m=1} h(t_m)
=
\sum_{k=1}^{p} \sum_{j=0}^{\intpart{\frac{n-k}{p}}} \frac{1}{jp+k} h(t_{jp+k}) \nonumber\\
=&
 \sum_{k=1}^{p}  h(t_{k}) \left( \frac{1}{\log(n)} \sum_{j=0}^{\intpart{\frac{n-k}{p}}}  \frac{1}{jp+k}   \right).
 \label{eq_weak_case_root_of_unity}
\end{align}
It is easy to see that
$$
\lim_{n\to\infty} \frac{1}{\log(n)} \sum_{j=0}^{\intpart{\frac{n-k}{p}}} \frac{1}{jp+k}
=
\lim_{n\to\infty} \frac{1}{\log(n/p)+\log(p)}\sum_{j=0}^{\intpart{\frac{n}{p}}} \frac{1}{jp}
=
\frac{1}{p}.
$$
This shows the desired convergence. To get the rate of convergence, one has to use
\begin{align}
  \sum_{j=1}^n \frac{1}{j} = \log(n) +K_{4} + O\left(\frac{1}{n}\right).
\label{eq_euler_summation}
\end{align}
This is a classical result and can be found for instance in the book \cite{Apostol:84}.
This shows that Condition~\eqref{enum_thm_limit_dist_cond_2} is fulfilled.
The other calculations are similar. We therefore omit them.

We have until now proven that
\begin{align}
 \frac{w^n(f) - \E{w^n(f)}}{\sqrt{\log(n)}} \stackrel{d}{\to} \mathcal{N}
\end{align}
in all cases mentioned in Theorem~\ref{thm_main_result} inclusive the calculation of the correlation.
To complete the proof, we have to show that
\begin{align}
  \frac{\E{w^n(f)}}{\sqrt{\log(n)}} - \theta\sqrt{\log(n)} m(f)\to 0.
\end{align}
We define as in the proof of Theorem~\ref{thm_normal_limit_dist}
\begin{align}
 \widetilde{w}^n(f)(x):= \sum_{m=1}^n \log\Bigl(f(x^m)\Bigr) Y_m.
\label{eq_def_w_tilde}
\end{align}
We know from Lemma~\ref{lem_help_asyp_behavior} that $w^n(f)$
and $\widetilde{w}^n(f)$ have the same asymptotic behavior. It is therefore enough to prove
\begin{align}
\frac{\E{\widetilde{w}^n(f)}}{\sqrt{\log(n)}} - \theta\sqrt{\log(n)} m(f)\to 0.
\end{align}
We first look at the case $x$ not a root of unity.
We get with Lemma~\ref{lem_analytic_prop_1} and \eqref{eq_weak_convergence_caclulation}
\begin{align}
 \E{\widetilde{w}^n(f)(x)}
 =
 \theta \sum_{m=1}^n \frac{\log\bigl(f(x^m)\bigr)}{m}
 =
 \theta \log(n) \int_{0}^1 \log\bigl(f(e^{2 \pi i s})\bigr) \ ds + O(1).
 \end{align}
We have by definition that $m(f)= \int_{0}^1 \log\bigl(f(e^{2 \pi i s})\bigr) \ ds$ and thus
\begin{align}
 \frac{1}{\sqrt{\log(n)}}
 \sum_{m=1}^n \frac{\log\bigl(f(z^m)\bigr)}{m}
=&
\sqrt{\log(n)} m(f) + O\left(\frac{1}{\sqrt{\log(n)}}\right).
\label{eq_dist_m(f)_to_E}
\end{align}
In $x$ is a root of unity, one has to replace \eqref{eq_weak_convergence_caclulation} by \eqref{eq_euler_summation}.
We omit the details since this calculations are similarly.
\end{proof} 
\section{Estimation of the Wasserstein distance}
\label{sec_wasserstein}

We estimate in this section the convergence rate of the random variable $w^n(f)(x)$ with respect to the Wasserstein distance,
see Theorem~\ref{thm_estimate_wasserstein_1_dim} and Theorem~\ref{thm_estimate_wasserstein_2_dim}.
Unfortunately we have to distinguish between complex and real valued random variables.
We thus look in Section~\ref{sec_wasser_real_and_im_sep} first at the real and the imaginary part of $w^n(f)$ separately and then look in Section~\ref{sec_wass_2dim} at the complex case.
\subsection{The real and the imaginary part separately}
\label{sec_wasser_real_and_im_sep}
\begin{definition}
\label{def_wasserstein_1_dim}
 Let $X_1,X_2$ be real valued random variables not necessarily defined on the same space.
 The Wasserstein distance $\mathrm{d}_{W}$ between $X_1$ and $X_2$ is then defined as
 \begin{align}
 \mathrm{d}_{W}(X_1,X_2)
 :=
 \sup_{g\in \mathcal{G} }
 \left| \E{g(X_1)} - \E{g(X_2)} \right|
 \end{align}
 with
 \begin{align}
   \mathcal{G}
   :=
   \set{g\in C^1(\R,\R) :  \sup_{t\in \R} |g'(t)| \leq 1 }.
 \end{align}
 \end{definition}

It is easy to see that $\mathrm{d}_{W}$ is a metric. We now show
\begin{theorem}
\label{thm_estimate_wasserstein_1_dim}
Let $\theta>0$, and $x$ be either not a root of unity and of finite type  or a root of unity.
Let $\mathcal{N} = \mathcal{N}_a +i \mathcal{N}_b$ be as in Theorem~\ref{thm_main_result}, then
\begin{align}
\mathrm{d}_{W} \left( \mathcal{N}_a, \
\Re \left(\frac{w^n(f)(x)}{\sqrt{\log(n)}}-\theta\sqrt{\log(n)} m(f)(x) \right)
\right)
=
O\bigl(\log^{-1/2}(n)\bigr),\\
\mathrm{d}_{W} \left( \mathcal{N}_b, \
\Im \left(\frac{w^n(f)(x)}{\sqrt{\log(n)}}-\theta\sqrt{\log(n)} m(f)(x) \right)
\right)
=
O\bigl(\log^{-1/2}(n)\bigr).
\end{align}
\end{theorem}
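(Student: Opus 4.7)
The plan is to reduce the Wasserstein bound to three pieces via the Feller coupling and then estimate each at the required rate. Writing $X_n^{\Re} := \Re\bigl(w^n(f)(x)\bigr)/\sqrt{\log n}-\theta\sqrt{\log n}\,\Re\bigl(m(f)(x)\bigr)$ and recalling the proof of Theorem~\ref{thm_main_result}, we can decompose
\begin{align*}
X_n^{\Re} \;=\; D_n \;+\; \widetilde{X}_n \;+\; \epsilon_n,
\end{align*}
where $D_n = \bigl(\Re(w^n(f))-\Re(\widetilde{w}^n(f))\bigr)/\sqrt{\log n}$ is the coupling discrepancy (with $\widetilde{w}^n(f)$ as in \eqref{eq_def_w_tilde}), $\widetilde{X}_n = \bigl(\Re(\widetilde{w}^n(f))-\E{\Re(\widetilde{w}^n(f))}\bigr)/\sqrt{\log n}$ is a centred sum of independent Poisson-weighted terms, and $\epsilon_n = \E{\Re(\widetilde{w}^n(f))}/\sqrt{\log n}-\theta\sqrt{\log n}\,\Re(m(f))$ is a deterministic mean shift. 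The same decomposition works verbatim for the imaginary part.

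The key trivial observation is that whenever $U,V$ live on a common probability space and $g\in\mathcal{G}$, one has $|g(U)-g(V)|\leq|U-V|$, so $\mathrm{d}_W(U,V)\leq\E{|U-V|}$. Combining this with the triangle inequality gives
\begin{align*}
\mathrm{d}_W(X_n^{\Re},\mathcal{N}_a) \;\leq\; \E{|D_n|} \;+\; |\epsilon_n| \;+\; \mathrm{d}_W(\widetilde{X}_n,\mathcal{N}_a).
\end{align*}
The first summand is $O(\log^{-1/2}(n))$ by Lemma~\ref{lem_help_asyp_behavior} applied to $a_m=\Re(\log f(x^m))$, since that lemma gives $\E{|\widetilde{w}^n(f)-w^n(f)|}=O(1)$. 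The deterministic term $|\epsilon_n|$ is $O(\log^{-1/2}(n))$ by exactly the computation in \eqref{eq_dist_m(f)_to_E} that concludes the proof of Theorem~\ref{thm_main_result}; the error of order $O(n^{-\delta_a})$ incurred when replacing the Riemann sum by the integral is absorbed and the remaining tail is $1/\sqrt{\log n}$.

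The core of the argument is the third summand. Here $\widetilde{X}_n = \sum_{m=1}^n \tfrac{a_m}{\sqrt{\log n}}(Y_m-\theta/m)$ is a sum of independent, centred random variables with variance $\sigma_n^2 := \tfrac{\theta}{\log n}\sum_m a_m^2/m\to \theta V_a$ (by condition~\eqref{enum_thm_limit_dist_cond_4} and Lemma~\ref{lem_analytic_prop_1}). I would invoke the standard Berry--Esseen-type bound in $L^1$-Wasserstein distance for sums of independent variables, of the shape
\begin{align*}
\mathrm{d}_W\!\left(\widetilde{X}_n,\,\sqrt{\theta V_a}\,\mathcal{N}_a\right) \;\leq\; \frac{C}{\sigma_n^3}\sum_{m=1}^n \E{\left|\tfrac{a_m}{\sqrt{\log n}}(Y_m-\theta/m)\right|^3},
\end{align*}
which follows either from Stein's method or by a direct smoothing-and-characteristic-function argument. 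Using $\E{|Y_m-\theta/m|^3}\leq C\,\theta/m$ for $m$ large, the right-hand side is dominated by $\tfrac{C}{(\log n)^{3/2}}\sum_m |a_m|^3/m$, and by Abel summation together with conditions~\eqref{enum_thm_limit_dist_cond_2} and~\eqref{enum_thm_limit_dist_cond_5} this sum is $O(\log n)$ in the settings relevant for Theorem~\ref{thm_main_result}, yielding the required $O(\log^{-1/2}(n))$. A final adjustment $\mathrm{d}_W(\sqrt{\theta V_a}\,\mathcal{N}_a,\mathcal{N}_a)$ is zero since $\mathcal{N}_a$ is by definition $\sqrt{\theta V_a}\,\mathcal{N}(0,1)$ under the covariance normalization of Theorem~\ref{thm_main_result}.

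The main obstacle I anticipate is the Berry--Esseen step: one has to apply the quantitative CLT to a triangular array that is non-identically distributed and, in Case~1.2 of Theorem~\ref{thm_main_result} where $f$ vanishes at roots of unity, has an $|a_m|$-sequence that is not bounded. In that case the third-moment estimate must be obtained by splitting the sum (as in \eqref{eq_change_var_theta<1}) and using Koksma-type inequalities with the $\delta$-cut near the singularities; controlling the tail contribution of indices $m$ for which $t_m$ is close to a zero of $f(e^{2\pi i s})$ is where the finite-type hypothesis on $x$ becomes essential for the quantitative rate, just as it was for the qualitative convergence.
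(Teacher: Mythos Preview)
Your proposal is correct and follows essentially the same route as the paper: reduce to the Poisson sum $\widetilde{w}^n(f)$ via the Feller coupling and Lemma~\ref{lem_help_asyp_behavior}, absorb the deterministic mean shift using \eqref{eq_dist_m(f)_to_E}, and then apply a one-dimensional Stein/Berry--Esseen Wasserstein bound to the centred sum $\sum_m a_m(Y_m-\theta/m)/\sqrt{\log n}$. The paper makes the quantitative CLT step explicit by quoting Theorem~\ref{thm_essitmate_Wasser_with_stein}, and handles the variance mismatch $\sigma_n^2\to\theta V_a$ with the triangle inequality exactly as you outline; your ``final adjustment'' sentence is slightly garbled notationally (you really need $\mathrm{d}_W(\mathcal{N}(0,\sigma_n^2),\mathcal{N}_a)=|\sigma_n-\sqrt{\theta V_a}|$, which is small, not zero), and your appeal to conditions \eqref{enum_thm_limit_dist_cond_2} and \eqref{enum_thm_limit_dist_cond_5} for $\sum_m|a_m|^3/m=O(\log n)$ should really invoke the quantitative Koksma-type estimates of Section~\ref{sec_poof_of_central_limit_for_Wn}, since condition~\eqref{enum_thm_limit_dist_cond_5} alone only yields $o(\log^{3/2} n)$---but you note this yourself by restricting to ``the settings relevant for Theorem~\ref{thm_main_result}''.
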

We prove this theorem by reducing it to
\begin{theorem}[{\cite[Theorems~3.1~and~3.2]{MR2235448}}]
\label{thm_essitmate_Wasser_with_stein}
Let $\xi_1,\cdots, \xi_n$ be independent random variables with
\begin{align}
 \E{\xi_m}=0,\   \E{\sum_{m=1}^n \xi^2_m } = V \text{ and } \sum_{m=1}^n \E{|\xi^3_m|} < \infty.
\end{align}
Then
\begin{align}
 d_W \left( \sum_{m=1}^n \xi_m  , \ \mathcal{N}(0,V)  \right)
 \leq
 3 V^{3/2}\sum_{m=1}^n \E{|\xi^3_m|}.
\end{align}
\end{theorem}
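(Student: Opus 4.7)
My plan is to apply the classical Stein method for normal approximation in Wasserstein distance, following the framework of \cite{MR2235448}. Fix a test function $g \in \mathcal{G}$ and let $Z \sim \mathcal{N}(0, V)$. The first step is to solve the Stein equation
\begin{equation*}
V f'(w) - w f(w) = g(w) - \E{g(Z)},
\end{equation*}
whose explicit solution is
\begin{equation*}
f(w) = \frac{e^{w^2/(2V)}}{V} \int_{-\infty}^{w} \bigl(g(t) - \E{g(Z)}\bigr) e^{-t^2/(2V)}\, dt.
\end{equation*}
Standard estimates give uniform bounds on $\|f'\|_\infty$ and $\|f''\|_\infty$ in terms of $\sup|g'| \leq 1$ and the variance $V$; for the sharp constants I would simply cite \cite[Chapter 3]{MR2235448} rather than reprove them.

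Let $W = \sum_{m=1}^n \xi_m$. Since $|\E{g(W)} - \E{g(Z)}| = |\E{V f'(W) - W f(W)}|$, it suffices to control the latter. Setting $W_m := W - \xi_m$, which by independence is independent of $\xi_m$, and using $\E{\xi_m}=0$ to insert $f(W_m)$ for free, I would decompose
\begin{equation*}
\E{W f(W)} = \sum_{m=1}^n \E{\xi_m \bigl(f(W) - f(W_m)\bigr)}
\end{equation*}
and expand by Taylor as $f(W) - f(W_m) = \xi_m f'(W_m) + \int_0^{\xi_m}(\xi_m - s) f''(W_m + s)\,ds$. On the other side, the identity $V\E{f'(W)} = \sum_m \E{\xi_m^2}\E{f'(W)}$ together with a further first‑order expansion of $f'(W) - f'(W_m)$ as an integral of $f''$ allows me to replace $\E{f'(W)}$ by $\E{f'(W_m)}$ up to an error controlled by $\|f''\|_\infty |\xi_m|$. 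The resulting first‑order pieces, namely $\E{\xi_m^2 f'(W_m)}$, cancel precisely between the two sides by independence, and what remains is a sum of remainder terms each bounded pointwise by $\|f''\|_\infty |\xi_m|^3$.

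Taking expectations and summing over $m$ yields
\begin{equation*}
\bigl|\E{g(W)} - \E{g(Z)}\bigr| \leq C(V)\, \|f''\|_\infty \sum_{m=1}^n \E{|\xi_m|^3},
\end{equation*}
and then taking the supremum over $g\in\mathcal{G}$ gives $d_W(W,\mathcal{N}(0,V))$. The only nonstandard ingredient is tracking the exact dependence of $\|f''\|_\infty$ on $V$ in the Stein‑solution bounds so as to recover the stated coefficient; this is the main obstacle, and I would dispatch it by directly invoking the sharp bounds proved in the cited reference. All probabilistic content of the proof is contained in the independence‑plus‑Taylor cancellation above.
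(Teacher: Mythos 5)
The paper itself gives no proof of this statement: it is imported wholesale, with the citation \cite{MR2235448}, as a known Stein--method bound. Your sketch is the standard argument behind that reference, and its skeleton is sound: solve the Stein equation for $\mathcal{N}(0,V)$, use $\mathbb{E}[\xi_m]=0$ and the independence of $\xi_m$ from $W_m=W-\xi_m$ to get $\mathbb{E}[Wf(W)]=\sum_m\mathbb{E}\bigl[\xi_m\bigl(f(W)-f(W_m)\bigr)\bigr]$, Taylor--expand, and cancel the first--order terms against $V\,\mathbb{E}[f'(W)]$. Two remarks on the execution. First, the replacement of $\mathbb{E}[f'(W)]$ by $\mathbb{E}[f'(W_m)]$ costs terms of size $\mathbb{E}[\xi_m^2]\,\mathbb{E}|\xi_m|$, and you need Lyapunov's inequality ($\mathbb{E}[\xi_m^2]\le(\mathbb{E}|\xi_m|^3)^{2/3}$, $\mathbb{E}|\xi_m|\le(\mathbb{E}|\xi_m|^3)^{1/3}$) to dominate these by $\mathbb{E}|\xi_m|^3$; you pass over this silently. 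Second, since the only quantitatively delicate ingredient --- the bound on $\|f''\|_\infty$ and hence the final constant --- is delegated back to the very reference from which the theorem is quoted, your proposal in substance does no more than the paper does, namely invoke \cite{MR2235448}.

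The one genuine problem is your promise to ``recover the stated coefficient'': the coefficient $3V^{3/2}$ as printed cannot be obtained by this (or any) argument, because it is a mis-transcription. The sharp bound of the cited source is $d_W(W,\mathcal{N}(0,1))\le 3\sum_m\mathbb{E}|\xi_m|^3$ when $\Var{W}=1$; rescaling gives $d_W(W,\mathcal{N}(0,V))\le 3V^{-1}\sum_m\mathbb{E}|\xi_m|^3$, equivalently $3V^{-3/2}\sum_m\mathbb{E}|\xi_m|^3$ for the normalized sum, and your Taylor/independence computation combined with the standard estimate $\|f''\|_\infty\le 2\sup_t|g'(t)|/V$ for the $\mathcal{N}(0,V)$ Stein solution yields exactly this $V^{-1}$ form (with constant $\tfrac32\cdot 2=3$). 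The literal statement with $V^{3/2}$ is false: for $n=1$ and $\xi_1=\pm\varepsilon$ with probability $\tfrac12$ each, the right-hand side is $3\varepsilon^{6}$ while $d_W\bigl(\xi_1,\mathcal{N}(0,\varepsilon^2)\bigr)=c\,\varepsilon$ with $c=d_W\bigl(\pm1,\mathcal{N}(0,1)\bigr)>0$. So state and prove the $V^{-1}$ (or unit-variance) version instead; this is harmless for its use in Theorem~\ref{thm_estimate_wasserstein_1_dim}, where $V$ converges to a finite constant.
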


\begin{proof}[Proof of Theorem~\ref{thm_estimate_wasserstein_1_dim}]
We use the notation
\begin{align}
  c_m  &=\log\bigl(f(x^m)\bigr), \  a_m =\Re(c_m), \ b_m =\Re(c_m),\\
  a(t) &=\log\bigl|f(e^{2\pi i t})\bigr|, \ b(t) =\arg \bigl(f(e^{2\pi i t})\bigr).
  \label{eq_notationen_wasser}
\end{align}
We use as in the proof of Theorem~\ref{thm_main_result} the Feller coupling, see Section~\ref{sec_cycle_counts}.
The random variables $w^n(f)(x)$ and $\widetilde{w}^n(f)(x)$ are thus defined on the same space.
We now get with Lemma~\ref{lem_help_asyp_behavior}
\begin{align}
  d_W &\left(\Re \left(\frac{w^n(f)(x)}{\sqrt{\log(n)}}\right) , \Re \left(\frac{\widetilde{w}^n(f)(x)}{\sqrt{\log(n)}} \right) \right)\nonumber\\
&=
  \sup_{g\in \mathcal{G} }
 \left| \E{g\left(\Re \left(\frac{w^n(f)(x)}{\sqrt{\log(n)}}\right)\right)} - \E{g\left(\Re \left(\frac{\widetilde{w}^n(f)(x)}{\sqrt{\log(n)}}\right)\right)}
  \right| \nonumber\\
&=
  \sup_{g\in \mathcal{G} }
 \left| \E{g\left(\Re \left(\frac{w^n(f)(x)}{\sqrt{\log(n)}}\right)\right)
 -
 g\left(\Re \left(\frac{\widetilde{w}^n(f)(x)}{\sqrt{\log(n)}}\right)\right)}
  \right| \nonumber\\
&\leq
 \E{\left|
\frac{w^n(f)(x)}{\sqrt{\log(n)}} -
 \frac{\widetilde{w}^n(f)(x)}{\sqrt{\log(n)}}
\right|}
=
O\bigl(\log^{-1/2}(n)\bigr).
\label{eq_wasser_change_w_wtilde}
\end{align}
Similarly for the imaginary part.

This shows that we can replace $w^n(f)$ by $\widetilde{w}^n(f)$ in Theorem~\ref{thm_estimate_wasserstein_1_dim}.
It follows with the triangle inequality and \eqref{eq_dist_m(f)_to_E} that one can replace also
\begin{align}
 \theta\sqrt{\log(n)} m(f)(x)
\ \text{ by } \
   \frac{\theta}{\sqrt{\log(n)}} \sum_{m=1}^{n} \frac{c_m}{m}.
\end{align}
We now have
\begin{align}
  \Re\left( \widetilde{w}^n(f) - \frac{\theta}{\sqrt{\log(n)}} \sum_{m=1}^{n} \frac{c_m}{m} \right)
  =
  \frac{1}{\sqrt{\log(n)}} \sum_{m=1}^n a_m \left( Y_m-\frac{\theta}{m} \right).
\end{align}
Since all $Y_m$ are independent, we can apply Theorem~\ref{thm_essitmate_Wasser_with_stein} for
\begin{align}
  \xi_m = \frac{1}{\sqrt{\log(n)}} a_m \left( Y_m-\frac{\theta}{m} \right).
\end{align}
We only have to check that the assumptions of Theorem~\ref{thm_essitmate_Wasser_with_stein} are fulfilled. We use the computations in Section~\ref{sec_poof_of_central_limit_for_Wn} and get
\begin{align}
\E{\sum_{m=1}^n \xi^2_m }
&=
\frac{\theta}{\log(n)} \sum_{m=1}^n \frac{a_m^2}{m} = V_a + O(n^{-\delta_a}) \label{eq_approx_Va},
\\
\sum_{m=1}^n \E{|\xi^3_m|}
&=
O\left(
\frac{1}{\log^{3/2}(n)} \sum_{m=1}^n \frac{|a_m|^3}{m} \right)
=
O\left(\frac{1}{\sqrt{\log(n)}}\right).
\end{align}
An application of the triangle inequality shows that one can neglect the error term $O(n^{-\delta_a})$ in \eqref{eq_approx_Va}. This proves the theorem.
\end{proof}

\subsection{The two dimensional case}
\label{sec_wass_2dim}
The definition of the Wasserstein distance can be extended to $\R^d$ without any problems. Unfortunately it is often very difficult to handle the case $d>1$. In many situations it is much easier to take stronger assumptions on the test functions $g$. We thus set
\begin{definition}
\label{def_wasserstein_2_dim}
 Let $X_1,X_2$ be random variables with values in $\R^d$, not necessarily defined on the same space.
 The weak Wasserstein distance $\mathrm{d}_{wW}$ between $X_1$ and $X_2$ is then defined as
 \begin{align}
 \mathrm{d}_{wW}(X_1,X_2)
 :=
 \sup_{g\in \mathcal{G} }
 \left| \E{g(X_1)} - \E{g(X_2)} \right|
 \end{align}
 with
 \begin{align}
   \mathcal{G}
   :=
   \set{g\in C^\infty(\R^d,\R) ;  M_1(g) \leq 1 \text{ and } M_2(g) \leq 1}
 \end{align}
and
\begin{align}
   M_k(g)
   :=
   \sup_{
   \substack{
   u\in\R^d\\
   1\leq i_1,\cdots, i_k \leq d
   }}
   \left|
   \frac{\partial^k}{\partial u_{i_1} \cdots \partial u_{i_k}} g(u)
   \right|.
\end{align}

 \end{definition}
We now show
\begin{theorem}
\label{thm_estimate_wasserstein_2_dim}
Let $\theta>0$, and $x$ be either not a root of unity and of finite type  or a root of unity.
Let $\mathcal{N}$ be as in Theorem~\ref{thm_main_result}. We then have
\begin{align}
\mathrm{d}_{wW} \left( \mathcal{N}, \
\frac{w^n(f)(x)}{\sqrt{\log(n)}}-\theta\sqrt{\log(n)} m(f)(x)
\right)
=
O\bigl(\log^{-1/2}(n)\bigr).
\end{align}
\end{theorem}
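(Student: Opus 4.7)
The strategy mirrors the 1-dimensional proof of Theorem~\ref{thm_estimate_wasserstein_1_dim}: I first reduce to a sum of independent random vectors via the Feller coupling and the identity \eqref{eq_dist_m(f)_to_E}, then invoke a multivariate Stein-type inequality.

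First, any $g\in\mathcal{G}$ satisfies the Lipschitz estimate $|g(u)-g(v)|\leq \sqrt{2}\,\norm{u-v}$ on $\R^2$, since $M_1(g)\leq 1$. Working on the probability space furnished by the Feller coupling of Section~\ref{sec_cycle_counts} and viewing $w^n(f)(x)$ and $\widetilde{w}^n(f)(x)$ as $\R^2$-valued through the real/imaginary decomposition, Lemma~\ref{lem_help_asyp_behavior} then yields, exactly as in \eqref{eq_wasser_change_w_wtilde},
\begin{align*}
  \mathrm{d}_{wW}\!\left(\frac{w^n(f)(x)}{\sqrt{\log n}},\; \frac{\widetilde{w}^n(f)(x)}{\sqrt{\log n}}\right) = O\bigl(\log^{-1/2}(n)\bigr).
\end{align*}
A second application of the Lipschitz bound together with \eqref{eq_dist_m(f)_to_E} replaces the deterministic centering $\theta\sqrt{\log n}\,m(f)(x)$ by $\frac{\theta}{\sqrt{\log n}}\sum_{m=1}^n c_m/m$ at the same cost. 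The remaining task is to estimate $\mathrm{d}_{wW}(\widetilde{Z}_n,\mathcal{N})$, where $\widetilde{Z}_n := \frac{1}{\sqrt{\log n}}\sum_{m=1}^n c_m(Y_m-\theta/m)$ is a sum of independent centered $\R^2$-valued random vectors $\xi_m := \frac{(a_m,b_m)}{\sqrt{\log n}}(Y_m-\theta/m)$.

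For this last step I would invoke a multidimensional analogue of Theorem~\ref{thm_essitmate_Wasser_with_stein}, tailored to test functions $g$ with $M_1(g),M_2(g)\leq 1$. Multivariate Stein-method estimates of this type (such as those of G\"otze or Rai\v{c}) furnish an inequality of the form
\begin{align*}
  \left|\E{g(\widetilde{Z}_n)}-\E{g(\mathcal{N})}\right|
  \leq C\,\left(\norm{\mathrm{Cov}(\widetilde{Z}_n)-\Sigma} + \sum_{m=1}^n \E{\norm{\xi_m}^3}\right).
\end{align*}
The covariance error is $O(n^{-\delta})$ for some $\delta>0$ by the same computation as in \eqref{eq_approx_Va} applied entrywise to $V_a$, $V_b$ and $E_{ab}$. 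For the third-moment sum, $\E{|Y_m-\theta/m|^3}=O(\theta/m)$ for the Poisson variables $Y_m$, and the computation from the 1-dimensional case (applying condition~\eqref{enum_thm_limit_dist_cond_5} to both $|a_m|$ and $|b_m|$) gives $\sum_m \E{\norm{\xi_m}^3}=O(\log^{-1/2}(n))$.

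The main obstacle is securing such a multivariate Stein inequality that is linear in the third moments while using only $M_2(g)\leq 1$: a naive smoothing reduction from the bounded-third-derivative version of the Lindeberg swap loses a factor and yields only a $\log^{-1/3}(n)$ rate. The sharp $\log^{-1/2}(n)$ rate requires running Stein's method directly by solving the Stein PDE on $\R^2$ with the Meckes/Chatterjee--Meckes Hessian bounds for its solution, so that the second-order smoothness of $g$ is exploited to its full strength.
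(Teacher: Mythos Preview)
Your outline matches the paper's argument almost exactly: the paper carries out precisely the reduction via the Feller coupling and \eqref{eq_dist_m(f)_to_E}, and then runs Stein's method directly on $\R^2$ using the solution $Ug$ of the Stein equation from \cite{multivarite_normal} (Lemma~\ref{thm_steins_equality_2_dim}) together with the third-derivative bound $M_3(Ug)\le K\,M_2(g)$ of Lemma~\ref{lem_Upper bounds_Ug}. This is the ``Stein PDE with Hessian bounds'' you allude to, and it produces exactly the linear-in-third-moments estimate you wanted; the paper makes the computation explicit via a zero-bias style decomposition with the kernels $K_m(t)=\E{(Y_m-\theta/m)\mathbf{1}_{\{0\le t\le Y_m\}}}$, but this is just the concrete execution of what you sketched.

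The one genuine omission in your proposal is the degenerate case. The bound $M_3(Ug)\le K\,M_2(g)$ in Lemma~\ref{lem_Upper bounds_Ug} is only stated for $\Sigma$ positive definite, with $K$ depending on $\Sigma$; when $\Sigma$ is singular the constant blows up and the argument does not go through. The paper handles this separately: Lemma~\ref{lem_sigma_non_negativ_definit} shows that $\Sigma$ is singular exactly when $a_m$ and $b_m$ are (eventually) linearly dependent, and in that case the problem collapses to the one-dimensional Theorem~\ref{thm_estimate_wasserstein_1_dim}. You should add this case distinction to make the argument complete.
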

This theorem is an alternative proof of Theorem~\ref{thm_main_result} since the weak Wasserstein distance is a metric on the space of random variables.

We prove Theorem~\ref{thm_estimate_wasserstein_2_dim} with Stein's method. We can not give here a full introduction, but try to illustrate at least the idea of Stein's method.
Assume that a random variable $Z$ and a $g \in \mathcal{G}$ are given. In many situations  one can find a ``good" function $\widetilde{g}$ only depending on $g$ and $Z$ such that
\begin{align}
  \E{g(X)} - \E{g(Z)}
  =
  \E{\widetilde{g}(X)}.
  \label{eq_stein_equation_illustration}
\end{align}
for all random variables $X$. This simplifies the study of the Wasserstein distance since one now has to consider only one random variable. This reformulation is of course only useful if we can find a $\widetilde{g}$ with good properties. This is surprisingly often the case.
For $d=1$, $N$ a standard normal distributed random variable and $g\in \C^1( \R,\R)$, one has
\begin{align}
  \E{f_g^\prime(X) - X f_g (X)}
  =
  \E{g(X)} - \E{g(N)}
\end{align}
with
\begin{align}
  f_g(x):= e^{x^2/2} \int_{-\infty}^x (g(t)-\E{N})e^{-t^2/2} \ dt.
\end{align}
To proof Theorem~\ref{def_wasserstein_2_dim}, we need of course a $2$-dimensional version. We use here
\begin{lemma}[{\cite[Lemma 1]{multivarite_normal}}]
  \label{thm_steins_equality_2_dim}
Let $g:\R^2 \to \R$ be a smooth function and $\mathcal{N}$ be a bivariate normal distributed random variable with covariance matrix $\Sigma$.
We set
\begin{align}
  Ug(x)
  :=
  \int_0^1 \frac{1}{2t} \left( \E{g(\sqrt{t}x + \sqrt{1-t}N_\Sigma)} - \E{g(N_\Sigma)}     \right) \ dt.
  \label{eq_def_Ug}
\end{align}
We then have
\begin{align}
  \E{g(X)} - \E{g(\mathcal{N})}
  =
  \E{ \scalar{X,\nabla (Ug)(X)}  - \scalarHS{ \mathrm{Hess(Ug)(X),\Sigma} } }
  \label{eq_stein_eq_bivariate}
\end{align}
with $\scalar{\cdot,\cdot}$ the standard inner product on $\R^2$ and $\scalarHS{M_1,M_2} = \mathrm{Tr}(M_1 M^T_2)$.
\end{lemma}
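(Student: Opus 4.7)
The plan is to derive the identity via the classical interpolation/semigroup approach underlying multivariate Stein's method. The key observation is that $Ug$ is the potential associated with the Mehler-type semigroup
\begin{equation*}
  P_t g(x) := \E{g(\sqrt{t}\,x + \sqrt{1-t}\,N_\Sigma)}, \qquad t\in[0,1],
\end{equation*}
which interpolates $P_0 g(x) = \E{g(N_\Sigma)}$ and $P_1 g(x) = g(x)$, since by definition $Ug(x) = \int_0^1 \frac{P_t g(x) - \E{g(N_\Sigma)}}{2t}\,dt$. The proof then reduces to computing $\partial_t P_t g$, rewriting it purely in terms of $\nabla P_t g$ and $\mathrm{Hess}(P_t g)$, and integrating from $0$ to $1$.

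For the first step I would differentiate under the expectation, which gives
\begin{equation*}
  \partial_t P_t g(x) = \tfrac{1}{2\sqrt{t}}\E{\scalar{x,\nabla g(\sqrt{t}\,x+\sqrt{1-t}\,N_\Sigma)}} - \tfrac{1}{2\sqrt{1-t}}\E{\scalar{N_\Sigma,\nabla g(\sqrt{t}\,x+\sqrt{1-t}\,N_\Sigma)}}.
\end{equation*}
The second expectation is handled by the coordinatewise Gaussian integration by parts formula $\E{(N_\Sigma)_i F(N_\Sigma)} = \sum_j \Sigma_{ij}\,\E{\partial_j F(N_\Sigma)}$, applied with $F_i = \partial_i g(\sqrt{t}\,x+\sqrt{1-t}\,\cdot)$; this turns $\nabla g$ into $\mathrm{Hess}(g)$ at the price of a factor $\sqrt{1-t}$. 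Combining this with the chain-rule identities $\nabla P_t g(x) = \sqrt{t}\,\E{\nabla g(\cdots)}$ and $\mathrm{Hess}(P_t g)(x) = t\,\E{\mathrm{Hess}(g)(\cdots)}$, both prefactors collapse and one arrives at the Ornstein--Uhlenbeck-type generator identity
\begin{equation*}
  2t\,\partial_t P_t g(x) = \scalar{x,\nabla P_t g(x)} - \scalarHS{\mathrm{Hess}(P_t g)(x),\Sigma}.
\end{equation*}

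Next, the fundamental theorem of calculus gives $g(x) - \E{g(N_\Sigma)} = \int_0^1 \partial_t P_t g(x)\,dt$; dividing the previous display by $2t$, integrating, and interchanging $\nabla$ and $\mathrm{Hess}$ with the integral defining $Ug$, one obtains the pointwise identity $g(x) - \E{g(N_\Sigma)} = \scalar{x,\nabla Ug(x)} - \scalarHS{\mathrm{Hess}(Ug)(x),\Sigma}$. Evaluating at the random vector $X$ and taking expectation yields \eqref{eq_stein_eq_bivariate}.

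The main technical obstacle is the singularity of the weight $1/(2t)$ at $t=0$: one must verify that $Ug$ and its first two derivatives exist and that the interchanges above are legitimate. For $g$ in the admissible class (smooth with bounded first and second derivatives, as in Definition~\ref{def_wasserstein_2_dim}), a first-order Taylor expansion of $P_t g(x)$ around $t=0$ shows $P_t g(x) - \E{g(N_\Sigma)} = O(t)$ uniformly in $x$, while $\nabla P_t g$ and $\mathrm{Hess}(P_t g)$ stay uniformly bounded in $t$. Consequently every integral involved converges absolutely and every interchange of derivative, expectation, and integral is justified by dominated convergence, closing the proof.
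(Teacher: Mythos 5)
The paper offers no proof of this lemma at all: it is imported verbatim as Lemma~1 of the cited reference on multivariate normal approximation, so there is nothing internal to compare against. Your interpolation argument is precisely the standard proof of that cited result: introduce $P_t g(x)=\E{g(\sqrt{t}x+\sqrt{1-t}N_\Sigma)}$, compute $\partial_t P_t g$, convert the $\scalar{N_\Sigma,\nabla g}$ term into a Hessian term by Gaussian integration by parts (which, as you implicitly use, holds for any covariance $\Sigma$, singular or not), absorb the prefactors via $\nabla P_t g=\sqrt{t}\,\E{\nabla g(\cdot)}$ and $\mathrm{Hess}(P_tg)=t\,\E{\mathrm{Hess}\,g(\cdot)}$, integrate over $t$, and evaluate at $X$. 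All of this is correct and complete in substance.

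One small inaccuracy in your final paragraph: for $g$ with $M_1(g)\leq 1$, $M_2(g)\leq 1$ one has $P_t g(x)-\E{g(N_\Sigma)}=O\bigl(\sqrt{t}\,(1+|x|)\bigr)$, not $O(t)$ uniformly in $x$ (take $g$ linear: $P_tg(x)-P_0g(x)=\sqrt{t}\,\scalar{v,x}$). This does not damage the argument, since $\int_0^1 t^{-1/2}\,dt<\infty$, so $Ug$, $\nabla Ug$ and $\mathrm{Hess}(Ug)$ are still given by absolutely convergent integrals and the interchanges remain justified by domination ($\frac{1}{2t}\nabla P_tg$ is $O(t^{-1/2})$ and $\frac{1}{2t}\mathrm{Hess}(P_tg)$ is $O(1)$, uniformly in $x$); you should also record the (here trivial) integrability of $\scalar{X,\nabla Ug(X)}$, e.g. via $\E{|X|}<\infty$, before taking the final expectation.
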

Before we proof Theorem~\ref{thm_estimate_wasserstein_2_dim}, we have to take a look at the derivations of $Ug$.
\begin{lemma}[{\cite[lemma 2]{multivarite_normal}}]
\label{lem_Upper bounds_Ug}
Let $g: \R^d \to \R$ be a smooth function and $Ug$ as in Lemma~\ref{thm_steins_equality_2_dim}.
If $\Sigma$ is positive definite, then
\begin{align}
   M_3(Ug)
  &\leq
  K\cdot
  M_2(g)
\end{align}
with $K$ only depending on $\Sigma$.
\end{lemma}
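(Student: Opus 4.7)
The plan is to prove the bound by differentiating $Ug$ directly under the integral sign, and then using Gaussian integration by parts to shift one derivative off $g$ so that only $M_2(g)$ appears. Let $N$ denote a centred Gaussian in $\R^d$ with covariance $\Sigma$. Since $\Sigma$ is positive definite, $\Sigma^{-1}$ exists and $\E{|N_k|}$ is finite for each $k$, and the constants produced below will depend only on $\Sigma$.

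The first step is routine differentiation. Each derivative $\partial/\partial x_{i}$ applied to $g(\sqrt{t}x+\sqrt{1-t}N)$ brings out a factor $\sqrt{t}$ by the chain rule. Differentiating three times therefore gives
\begin{align*}
\frac{\partial^3 (Ug)}{\partial x_{i_1}\partial x_{i_2}\partial x_{i_3}}(x)
=
\int_0^1 \frac{\sqrt{t}}{2}\, \E{\partial^3_{i_1 i_2 i_3} g\bigl(\sqrt{t}x+\sqrt{1-t}N\bigr)}\,dt,
\end{align*}
which is still in terms of $M_3(g)$ and so not yet enough. (The swap of differentiation and integration will be justified by the smoothness of $g$ and the dominated convergence theorem, as in Lemma~\ref{thm_steins_equality_2_dim}.)

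The key step is to convert one derivative on $g$ into an integration by parts against the Gaussian. Writing $z=\sqrt{t}x+\sqrt{1-t}N$, the chain rule also gives $\partial_{i_3} g(z)=\frac{1}{\sqrt{1-t}}\,\partial/\partial N_{i_3}\bigl[g(z)\bigr]$, so that
\begin{align*}
\E{\partial^3_{i_1 i_2 i_3} g(z)}
=
\frac{1}{\sqrt{1-t}}\,\E{\frac{\partial}{\partial N_{i_3}}\partial_{i_1 i_2} g(z)}.
\end{align*}
Applying the standard Gaussian integration-by-parts identity $\E{\partial_{N_{i_3}} h(N)}=\sum_k (\Sigma^{-1})_{i_3 k}\,\E{N_k\,h(N)}$ to $h(N)=\partial_{i_1 i_2}g(z)$ then yields
\begin{align*}
\E{\partial^3_{i_1 i_2 i_3} g(z)}
=
\frac{1}{\sqrt{1-t}}\sum_{k}(\Sigma^{-1})_{i_3 k}\,\E{N_k\,\partial_{i_1 i_2} g(z)}.
\end{align*}
Plugging this back in, we obtain
\begin{align*}
\left|\frac{\partial^3 (Ug)}{\partial x_{i_1}\partial x_{i_2}\partial x_{i_3}}(x)\right|
\leq
M_2(g)\,\sum_k |(\Sigma^{-1})_{i_3 k}|\,\E{|N_k|}\int_0^1 \frac{\sqrt{t}}{2\sqrt{1-t}}\,dt.
\end{align*}

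The remaining integral converges (it equals $\pi/4$), and the prefactor depends only on $\Sigma$ through $\Sigma^{-1}$ and the first absolute moments of the coordinates of $N$. Taking the supremum over $x\in\R^d$ and over $1\leq i_1,i_2,i_3\leq d$ gives the desired bound $M_3(Ug)\leq K\cdot M_2(g)$ with $K=K(\Sigma)$. The main technical obstacle is justifying the two interchanges of derivative and integral (differentiating three times under the $dt$-integral, and the Gaussian IBP under the expectation); both follow from the smoothness of $g$ together with uniform integrability bounds on a neighbourhood of any fixed $x$, so there is no serious difficulty beyond bookkeeping.
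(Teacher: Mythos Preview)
The paper does not prove this lemma at all: it is quoted verbatim from \cite[Lemma~2]{multivarite_normal} and used as a black box. So there is no ``paper's proof'' to compare against.

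Your argument is essentially the standard one and is correct in substance. Differentiating $Ug$ three times produces the factor $t^{3/2}/(2t)=\sqrt{t}/2$, and the Gaussian integration-by-parts identity $\E{\partial_{N_k}h(N)}=\sum_j(\Sigma^{-1})_{kj}\E{N_j h(N)}$ (valid because $\Sigma$ is positive definite and $h=\partial_{i_1 i_2}^2 g(\sqrt{t}x+\sqrt{1-t}\,\cdot)$ is bounded by $M_2(g)$) trades the third derivative of $g$ for a second derivative times a factor $1/\sqrt{1-t}$. The resulting $t$-integral $\int_0^1 \tfrac{\sqrt{t}}{2\sqrt{1-t}}\,dt=\pi/4$ is finite, and all remaining constants involve only $\Sigma^{-1}$ and $\E{|N_k|}$.

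One small point of bookkeeping you glossed over: as written, you first pass the third $x$-derivative under the $dt$-integral and \emph{then} integrate by parts. Justifying that third differentiation by dominated convergence would naively require a bound on $M_3(g)$, which is not assumed. The clean fix is to reverse the order: after two differentiations, apply the Gaussian IBP to the expression $\int_0^1 \tfrac12\,\E{\partial^2_{i_1 i_2}g(\sqrt{t}x+\sqrt{1-t}N)}\,dt$ to rewrite it in terms of $\partial_{i_1}g$ and a factor $1/\sqrt{1-t}$, and only then differentiate in $x_{i_3}$; the dominating function is now $\tfrac{\sqrt{t}}{2\sqrt{1-t}}\,M_2(g)\sum_k|(\Sigma^{-1})_{i_2 k}|\,\E{|N_k|}$, which is integrable. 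Alternatively one can argue by approximating $g$ by smooth functions with all $M_k$ finite. Either way this is exactly the ``routine bookkeeping'' you allude to, so the argument stands.
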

There exists also upper bounds for $M_k(Ug)$ if $\Sigma$ is non negative definite, but we do not need them here.
The reason is the following lemma.
\begin{lemma}
\label{lem_sigma_non_negativ_definit}
The covariance matrix $\Sigma$ in Theorem~\ref{thm_main_result} is singular if and only if there exist $\gamma =(\gamma_a,\gamma_b)\in\R\setminus\set{0}$ such that $ \gamma_a\log|f(x^m)| =  \gamma_b \arg \big(f(x^m)\bigr)$ for all $m \in \N$ except finitely many.
\end{lemma}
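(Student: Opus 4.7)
The plan is to rewrite the singularity of $\Sigma$ as the vanishing of a quadratic form in $(\gamma_a,\gamma_b)$, identify this form as a limit of averages of $c(t_m)^2$ for a single real-analytic test function $c$, and then use real-analytic rigidity together with the equidistribution or periodicity of the sequence $(t_m)$ to conclude.

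First I would observe that $\Sigma$ is positive semidefinite, so it is singular if and only if there exists a nonzero vector $(\gamma_a,-\gamma_b)\in\R^2$ killing the associated quadratic form. Unwinding the definitions of $V_a,V_b,E_{ab}$ from Theorem~\ref{thm_normal_limit_dist} gives
\[
\tfrac{1}{\theta}\bigl(\gamma_a^2 V_a - 2\gamma_a\gamma_b E_{ab} + \gamma_b^2 V_b\bigr)
=
\lim_{n\to\infty}\tfrac{1}{n}\sum_{m=1}^n c(t_m)^2,
\qquad
c(s):=\gamma_a a(s) - \gamma_b b(s),
\]
with $a(s),b(s)$ as in \eqref{eq_def_a(s)_b(s)}. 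Thus singularity of $\Sigma$ is equivalent to the existence of some nonzero $(\gamma_a,\gamma_b)$ making this limit vanish, and the ``$\Leftarrow$'' direction of the lemma is immediate: if only finitely many $c(t_m)$ are nonzero then the average tends to $0$.

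For the nontrivial direction I would split along the two regimes of Theorem~\ref{thm_main_result}. When $x$ is not a root of unity and of finite type, the Koksma-type estimates used in the proof of Theorem~\ref{thm_main_result} apply verbatim to $c^2$ (whose worst singularities at the finitely many zeros of $f$ are of $\log^2$-type and are absorbed exactly as in \eqref{eq_weak_convergence_caclulation}) and yield $\lim \tfrac{1}{n}\sum c(t_m)^2 = \int_0^1 c(s)^2\,ds$. If this integral vanishes then $c\equiv 0$ almost everywhere on $[0,1]$; but $c$ is real analytic on the complement of the finite zero set of $s\mapsto f(e^{2\pi i s})$, and a real-analytic function vanishing on a set of positive measure in a connected open set vanishes identically, so $c$ vanishes on the whole complement. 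Since $x$ is not a root of unity while every zero of $f$ is one, no $x^m$ is a zero of $f$ and $c(t_m)=0$ for every $m\in\N$. When $x$ is a root of unity of order $p$, the calculation \eqref{eq_weak_case_root_of_unity} identifies the limit instead as $\tfrac{1}{p}\sum_{k=1}^p c(k/p)^2$, which vanishes iff $c(k/p)=0$ for all $k=1,\dots,p$; since $x^m$ depends only on $m\bmod p$ and runs through all $p$-th roots of unity, we again obtain $c(t_m)=0$ for every $m$.

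The hard step is the implication ``$\int c^2=0 \Rightarrow c\equiv 0$ on the analytic locus'', which is exactly where real analyticity of $f$ is used to upgrade a measure-theoretic vanishing to a pointwise one on a dense set; everything else is bookkeeping with already-established Koksma-type bounds. Incidentally, the argument delivers the strictly stronger conclusion $\gamma_a a_m = \gamma_b b_m$ for \emph{every} $m$, which of course implies the ``all but finitely many $m$'' phrasing of the lemma.
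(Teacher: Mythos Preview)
Your argument is correct and follows essentially the same route as the paper's. The paper is a bit more economical: it quotes directly the identities
\[
V_a=\int_0^1 a(s)^2\,ds,\qquad E_{ab}=\int_0^1 a(s)b(s)\,ds,\qquad V_b=\int_0^1 b(s)^2\,ds
\]
already obtained in Section~\ref{sec_poof_of_central_limit_for_Wn}, and then observes that singularity of the $2\times 2$ matrix $\Sigma$ is exactly the equality case of the Cauchy--Schwarz inequality for $a,b\in L^2([0,1])$, i.e.\ linear dependence of $a$ and $b$. Your quadratic-form reformulation and re-invocation of the Koksma-type bounds for $c^2$ is equivalent but a slight detour; you could simply combine the three limits linearly. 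Likewise, you appeal to real analyticity to pass from ``$c=0$ a.e.'' to ``$c=0$ pointwise on the good set'', but continuity already suffices here, which is what the paper tacitly uses.

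One small imprecision in your write-up: $c(s)=\gamma_a a(s)-\gamma_b b(s)$ need not be real analytic on the whole complement of the zero set of $f(e^{2\pi i\cdot})$, because the principal-branch argument $b(s)$ can jump where $f(e^{2\pi i s})$ crosses the negative real axis. This does not affect the lemma as stated (those jump points form a finite set, so still only finitely many $t_m$ can land there), but it does mean your ``every $m$'' bonus is not quite justified by the sentence you wrote. It is nonetheless true: if $\gamma_b\neq 0$ and $c$ vanishes on both sides of such a point, then $b$ cannot actually jump there; and if $\gamma_b=0$ the claim is trivial.
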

\begin{proof}
We prove this lemma only for $x$ not a root of unity and of finite type. The case $x$ a root of unity is similarly.
We define $a(s)$ and $b(s)$ as \eqref{eq_notationen_wasser}.
We know from Section~\ref{sec_poof_of_central_limit_for_Wn} that
\begin{align}
  V_a=\int_0^1 a^2(s) \ ds, \  E_{ab}=\int_0^1 a(s)b(s) \ ds,\  V_b=\int_0^1 b^2(s) \ ds.
\end{align}
%
%
Since $\Sigma$ is a $2 \times 2 $ matrix, one can directly compute the eigenvalues. One gets after a small calculation that $\Sigma$ is non-negative definite if and only if
\begin{align}
  \left(\int_0^1 a^2(s) \ ds \right)  \left(   \int_0^1 b^2(s) \ dt \right)
  \geq
  \left(  \int_0^1 a(s)b(s) \ dt  \right)^2.
  \label{eq_wasser_sigma_positiv_schwarz}
\end{align}
and $\Sigma$ is singular if and only if we have equality in \eqref{eq_wasser_sigma_positiv_schwarz}.
But equation \eqref{eq_wasser_sigma_positiv_schwarz} is the Schwarz inequality for $L^2$.
This shows that $\Sigma$ is always non negative definite and that $\Sigma$ is singular if and only if the functions $a(t)$ and $b(t)$ are linearly dependent.
This proves the lemma since $a(t)$ and $b(t)$ have only finitely many discontinuity points and $x^{m_1} \neq x^{m_2} $ for $m_1 \neq m_2$.
\end{proof}

We are now ready to prove Theorem~\ref{thm_estimate_wasserstein_2_dim}.
\begin{proof}[Proof of Theorem~\ref{thm_estimate_wasserstein_2_dim}]
Let $g \in \mathcal{G}$ be given with $M_1(g)\leq 1, M_2(g) \leq 1$.
We have to distinguish the cases $\Sigma$ singular and $\Sigma$ regular.

We start with the singular case. We know from Lemma~\ref{lem_sigma_non_negativ_definit} that $\Sigma$ is singular if and only if $a_m \neq b_m$ for only finitely many $m$.
We thus have
\begin{align*}
  \E{g \left(\frac{1}{\sqrt{\log(n)}} \sum_{m=1}^n c_m Y_m    \right) }
  =
  \E{\widehat{g} \left(\frac{1}{\sqrt{\log(n)}} \sum_{m=1}^n a_m Y_m    \right)}
  +
  O\bigl (\log^{-1/2}(n)\bigr)
\end{align*}
with $ \widehat{g}(t):= g\bigl( (1+i)t \bigr)$.
This shows that we can argue as in the one-dimensional case. \\

We now come to $\Sigma$ regular.\\
One can use the same argumentation as in the proof of Lemma~\ref{thm_estimate_wasserstein_1_dim} to see that it is enough to show
\begin{align}
\mathrm{d}_{wW} \left( \mathcal{N}_{\widetilde{\Sigma}}, \
\frac{1}{\sqrt{\log(n)}}\sum_{m=1}^n c_m \left( Y_m-\frac{1}{m} \right)
\right)
\end{align}

with $\mathcal{N}_{\widetilde{\Sigma}}$ a bivariate normal distribution with covariance matrix
\begin{align}
\widetilde{\Sigma}
=
\left(
  \begin{array}{cc}
    \widetilde{V_a} & \widetilde{E}_{ab} \\
    \widetilde{E}_{ab} & \widetilde{V_b} \\
  \end{array}
\right)
=
\left(
  \begin{array}{cc}
    \sum_{m=1}^n \frac{\widetilde{a}^2_m}{m}    & \sum_{m=1}^n \frac{\widetilde{a}_m \widetilde{b}_m}{m} \\
    \sum_{m=1}^n \frac{\widetilde{a}_m \widetilde{b}_m}{m}  & \sum_{m=1}^n \frac{\widetilde{b}^2_m}{m} \\
  \end{array}
\right)
\end{align}
and
\begin{align}
\widetilde{a}_m
= \frac{a_m}{\sqrt{\log(n)}}, \
\widetilde{b}_m= \frac{b_m}{\sqrt{\log(n)}}
\ \text{ and } \
\widetilde{c}_m= \frac{c_m}{\sqrt{\log(n)}}.
\end{align}
We now use \eqref{eq_stein_eq_bivariate} to give the desired upper bound.
Let $Ug$ be as in \eqref{eq_def_Ug}. We use the notation
\begin{align}
  \nabla (Ug)
  =
  \binom{g_a}{g_b}, \
  \mathrm{Hess}(Ug)
  =
  \left(
    \begin{array}{cc}
      g_{aa} & g_{ab} \\
      g_{ab} & g_{bb}
    \end{array}
  \right)
   \ \text{ and } \
   X
 :=
 \sum_{ m = 1}^n \widetilde{c}_m \left( Y_m-\frac{1}{m} \right).
\end{align}
We also introduce
\begin{align*}
 X_a
 :=
 \sum_{ m = 1}^n \widetilde{a}_m \left( Y_m-\frac{1}{m} \right), \
 X_b
 =
 \sum_{m=1}^n \widetilde{b}_m \left( Y_k-\frac{1}{k} \right),\
 X_m
 &=
 \sum_{\substack{k \neq m \\ 1\leq k \leq n}} \widetilde{c}_k  \left( Y_k-\frac{1}{k} \right).
\end{align*}
We now identify $\C$ with $\R^2$ via $a+ib = \binom{a}{b}$.
We first look at the summand $\E{\scalar{X, \nabla (Ug)(X)} } = \E{X_a g_a(X)} + \E{X_b g_b(X)}$. We have
\begin{align}
   \E{X_a g_a(X)}
   &=
   \sum_{m=1}^n \widetilde{a}_m \E{ \left( Y_m-\frac{1}{m} \right) g_a(X)}\nonumber\\
   &=
   \sum_{m=1}^n \widetilde{a}_m \E{ \left( Y_m-\frac{1}{m} \right) \bigl( g_a(X) - g_a(X_m) \bigr)}\nonumber\\
   &=
   \sum_{m=1}^n \widetilde{a}_m \E{ \left( Y_m-\frac{1}{m} \right) \int_0^{Y_m}  \scalar{ \nabla g_a(X_m + t \widetilde{c}_m), \binom{\widetilde{a}_m}{\widetilde{b}_m} } \ dt  } \nonumber\\
   &=
   \sum_{m=1}^n \widetilde{a}_m \E{\left( Y_m-\frac{1}{m} \right)  \int_0^{\infty}  \scalar{ \nabla g_a(X_m + t \widetilde{c}_m), \binom{\widetilde{a}_m}{\widetilde{b}_m} } \mathbf{1}_{\set{0\leq t \leq Y_m}} \ dt  }\nonumber\\
   &=
   \sum_{m=1}^n \widetilde{a}_m \E{\int_0^{\infty} \scalar{ \nabla g_a(X_m + t \widetilde{c}_m), \binom{\widetilde{a}_m}{\widetilde{b}_m} } K_m(t) \ dt }\nonumber\\
   &=
    \sum_{m=1}^n \E{\int_0^\infty  \Bigl( \widetilde{a}^2_m g_{aa}(X_m + t \widetilde{c}_m) + \widetilde{a}_m \widetilde{b}_m g_{ab}(X_m + t \widetilde{c}_m) \Bigr) K_m(t) \ dt }
   \label{eq_wasser_mit_Km_1}
\end{align}
with
\begin{align}
  K_m(t)
  =
  \E{\left( Y_m-\frac{1}{m} \right)   \mathbf{1}_{\set{0\leq t \leq Y_m}}     }.
\end{align}
We have used for the second equality that $\E{Y_m-1/m}=0$ and that $X_m$ is independent of $Y_m$.
We have of course also to justify the existence of the integrals, but this follows immediately from $M_1(g) \leq 1, M_2(g)\leq 1$ and Lemma~\ref{lem_Upper bounds_Ug}. \\
We next look at
\begin{align}
\E{\scalarHS{ \mathrm{Hess}(Ug)(X),\Sigma }} = \widetilde{V}_a \E{g_{aa}(X)} +2 \widetilde{E}_{ab} \E{g_{ab}(X)} +\widetilde{V}_b \E{g_{bb}(X)}.
\end{align}
A direct computation shows that
\begin{align}
  \int_0^\infty K_m(t) \ dt = \frac{1}{m}
  \ \text{ and } \
  \int_0^\infty t K_m(t) \ dt = \frac{1}{m} + O\left( \frac{1}{m^2} \right)
  \label{eq_wasser_computation_of_km}
\end{align}
with $O(\cdot)$ independent of $n$ and $\Sigma$.
We thus have
\begin{align}
  \E{\widetilde{V}_a g_{aa}(X)}
  =
  \E{ g_{aa}(X) \sum_{m=1}^n \widetilde{a}^2_m \frac{1}{m} }
  =
  \sum_{m=1}^n \E{\widetilde{a}^2_m \int_0^\infty g_{aa}(X)K_m(t) \ dt}.
  \label{eq_wasser_mit_Km_2}
\end{align}
We combine \eqref{eq_wasser_mit_Km_1} and \eqref{eq_wasser_mit_Km_2} and get
\begin{align}
&\E{\scalar{X, \nabla (Ug)(X)} }
-
\E{\scalarHS{ \mathrm{Hess}(Ug)(X),\Sigma }}\nonumber\\
=&
\sum_{m=1}^n \E{\int_0^\infty \widetilde{a}_m^2 \Bigl(g_{aa}(X_m+\widetilde{c}_m t)-g_{aa}(X_m + \widetilde{c}_m Y_m) \Bigr) K_m(t)\ dt } \nonumber\\
&+
\sum_{m=1}^n 2\E{\int_0^\infty \widetilde{a}_m \widetilde{b}_m \Bigl(g_{ab}(X_m+\widetilde{c}_m t)-g_{ab}(X_m + \widetilde{c}_m Y_m) \Bigr) K_m(t)\ dt } \nonumber\\
&+
\sum_{m=1}^n \E{\int_0^\infty \widetilde{b}^2_m \Bigl(g_{bb}(X_m+\widetilde{c}_m t)-g_{bb}(X_m + \widetilde{c}_m Y_m) \Bigr) K_m(t)\ dt }.
\end{align}
We now use \eqref{eq_wasser_computation_of_km} and Lemma~\ref{lem_Upper bounds_Ug} to get
\begin{align}
\left|
\E{\int_0^\infty \Bigl(g_{aa}(X_m+\widetilde{c}_m t)-g_{aa}(X_m + \widetilde{c}_m Y_m) \Bigr) K_m(t) \ dt }
\right| \nonumber\\
\leq \
\E{\int_0^\infty M_3(Ug) |\widetilde{c}_m|(t + Y_m) K_m(t) \ dt }
\nonumber\\
\leq \
M_2(g)|\widetilde{c}_m| \int_0^\infty (t + \frac{1}{m}) K_m(t) \ dt \nonumber\\
\leq \
\frac{K}{\sqrt{\log(n)}} M_2(g) \frac{|a_m|+|b_m|}{m}.
\end{align}
Thus
\begin{align}
\E{\scalar{X, \nabla (Ug)(X)} }
-
\E{\scalarHS{ \mathrm{Hess}(Ug)(X),\Sigma }}\\
\leq
\frac{\breve{K}}{\log^{3/2}(n)}
\left(\sum_{m=1}^n |a_m^3|+|a_m^2b_m|+|a_m b^2_m|+|b_m^3|
\right).\nonumber
\end{align}
It follows with a computation similar to computation in \eqref{eq_weak_convergence_caclulation}
that the last expression is $O\bigl(\log^{-1/2}(n)\bigr)$.
This proves the theorem.
\end{proof}

%

\section*{Acknowledgements}
I would like to thank Andrew Barbour for some helpful discussions.

\bibliographystyle{plain}
\bibliography{literatur}

\end{document}